\newcommand{\nc}{\newcommand}
\nc{\fg}{\mathfrak{f} } \nc{\vg}{\mathfrak{v} } \nc{\wg}{\mathfrak{w} }
\nc{\zg}{\mathfrak{z} } \nc{\ngo}{\mathfrak{n} } \nc{\kg}{\mathfrak{k} }
\nc{\mg}{\mathfrak{m} } \nc{\bg}{\mathfrak{b} } \nc{\ggo}{\mathfrak{g} } \nc{\eg}{\mathfrak{e} }
\nc{\ggob}{\overline{\mathfrak{g}} } \nc{\sog}{\mathfrak{so} }
\nc{\sug}{\mathfrak{su} } \nc{\spg}{\mathfrak{sp} } \nc{\slg}{\mathfrak{sl} }
\nc{\glg}{\mathfrak{gl} } \nc{\cg}{\mathfrak{c} } \nc{\rg}{\mathfrak{r} }
\nc{\hg}{\mathfrak{h} } \nc{\tg}{\mathfrak{t} } \nc{\ug}{\mathfrak{u} }
\nc{\dg}{\mathfrak{d} } \nc{\ag}{\mathfrak{a} } \nc{\pg}{\mathfrak{p} }
\nc{\sg}{\mathfrak{s} } \nc{\affg}{\mathfrak{aff} } \nc{\qg}{\mathfrak{q} } \nc{\lgo}{\mathfrak{l} }
\nc{\pca}{\mathcal{P}} \nc{\nca}{\mathcal{N}} \nc{\lca}{\mathcal{L}}
\nc{\oca}{\mathcal{O}} \nc{\mca}{\mathcal{M}} \nc{\tca}{\mathcal{T}}
\nc{\aca}{\mathcal{A}} \nc{\cca}{\mathcal{C}} \nc{\gca}{\mathcal{G}}
\nc{\sca}{\mathcal{S}} \nc{\hca}{\mathcal{H}} \nc{\bca}{\mathcal{B}}
\nc{\dca}{\mathcal{D}} \nc{\eca}{\mathcal{E}} \nc{\wca}{\mathcal{W}}
\nc{\vp}{\varphi} \nc{\ddt}{\tfrac{d}{dt}} \nc{\dsdt}{\tfrac{d^2}{dt^2}} \nc{\dds}{\frac{d}{ds}}
\nc{\dpar}{\frac{\partial}{\partial t}} \nc{\im}{\mathrm{i}}
\nc{\SO}{\mathrm{SO}} \nc{\Spe}{\mathrm{Sp}} \nc{\Sl}{\mathrm{SL}}
\nc{\SU}{\mathrm{SU}} \nc{\Or}{\mathrm{O}} \nc{\U}{\mathrm{U}} \nc{\Gl}{\mathrm{GL}}
\nc{\Se}{\mathrm{S}} \nc{\Cl}{\mathrm{Cl}} \nc{\Spin}{\mathrm{Spin}}
\nc{\Pin}{\mathrm{Pin}} \nc{\G}{\mathrm{GL}_n(\RR)} \nc{\g}{\mathfrak{gl}_n(\RR)}
\nc{\RR}{{\Bbb R}} \nc{\HH}{{\Bbb H}} \nc{\CC}{{\Bbb C}} \nc{\ZZ}{{\Bbb Z}}
\nc{\FF}{{\Bbb F}} \nc{\NN}{{\Bbb N}} \nc{\QQ}{{\Bbb Q}} \nc{\PP}{{\Bbb P}} \nc{\OO}{{\Bbb O}}
\nc{\vs}{\vspace{.2cm}} \nc{\vsp}{\vspace{1cm}} \nc{\ip}{\langle\cdot,\cdot\rangle}
\nc{\ipp}{(\cdot,\cdot)} \nc{\la}{\langle} \nc{\ra}{\rangle} \nc{\unm}{\tfrac{1}{2}}
\nc{\unc}{\tfrac{1}{4}} \nc{\und}{\frac{1}{16}} \nc{\no}{\vs\noindent}
\nc{\lam}{\Lambda^2(\RR^n)^*\otimes\RR^n} \nc{\tangz}{{\rm T}^{\rm Zar}}
\nc{\nor}{{\sf n}}  \nc{\mum}{/\!\!/} \nc{\kir}{/\!\!/\!\!/}
\nc{\Ri}{\tfrac{4\Ric_{\mu}}{||\mu||^2}} \nc{\ds}{\displaystyle}
\nc{\ben}{\begin{enumerate}} \nc{\een}{\end{enumerate}} \nc{\f}{\frac}
\nc{\lb}{[\cdot,\cdot]} \nc{\isn}{\tfrac{1}{||v||^2}}
\nc{\gkp}{(\ggo=\kg\oplus\pg,\ip)} \nc{\ukh}{(\ug=\kg\oplus\hg,\ip)}
\nc{\tgkp}{(\tilde{\ggo}=\kg\oplus\pg,\ip)}
\nc{\wt}{\widetilde}
\nc{\iop}{\mathtt{i}} \nc{\jop}{\mathtt{j}} 
\nc{\Hk}{H_{\kil}} \nc{\gk}{g_{\kil}}
\nc{\Hess}{\operatorname{Hess}} \nc{\ad}{\operatorname{ad}}
\nc{\Ad}{\operatorname{Ad}} \nc{\rank}{\operatorname{rk}}
\nc{\Irr}{\operatorname{Irr}} \nc{\End}{\operatorname{End}}
\nc{\Aut}{\operatorname{Aut}} \nc{\Inn}{\operatorname{Inn}}
\nc{\Der}{\operatorname{Der}} \nc{\Ker}{\operatorname{Ker}}
\nc{\Iso}{\operatorname{Iso}} \nc{\Diff}{\operatorname{Diff}}
\nc{\Lie}{\operatorname{L}} \nc{\tr}{\operatorname{tr}} \nc{\dif}{\operatorname{d}}
\nc{\sen}{\operatorname{sen}} \nc{\modu}{\operatorname{mod}}
\nc{\CRic}{\operatorname{PP}} \nc{\Cric}{\operatorname{P}} \nc{\Ricci}{\operatorname{Ric}}
\nc{\sym}{\operatorname{sym}} \nc{\herm}{\operatorname{herm}} \nc{\symac}{\operatorname{sym^{ac}}}
\nc{\symc}{\operatorname{sym^{c}}} \nc{\scalar}{\operatorname{Sc}}
\nc{\grad}{\operatorname{grad}} \nc{\ricci}{\operatorname{Rc}} \nc{\kil}{\operatorname{B}} \nc{\cas}{\operatorname{C}} \nc{\lic}{\operatorname{L}}
\nc{\Nor}{\operatorname{Norm}}  \nc{\ricc}{\operatorname{Rc^{c}}}
\nc{\Ricc}{\operatorname{Ric^{c}}} \nc{\ricac}{\operatorname{Rc^{ac}}}
\nc{\Ricac}{\operatorname{Ric^{ac}}} \nc{\Riem}{\operatorname{Rm}} \nc{\Sec}{\operatorname{Sec}}
\nc{\riccig}{\operatorname{ric^{\gamma}}} \nc{\mm}{\operatorname{m}} \nc{\Mm}{\operatorname{M}}
\nc{\Le}{\operatorname{L}} \nc{\tang}{\operatorname{T}}
\nc{\level}{\operatorname{level}} \nc{\rad}{\operatorname{r}}
\nc{\abel}{\operatorname{ab}} \nc{\CH}{\operatorname{CH}} \nc{\Cone}{{\mathcal C}} \nc{\CCone}{\operatorname{CC}} \nc{\CP}{{\mathcal P}}
\nc{\mcc}{\operatorname{mcc}} \nc{\Adj}{\operatorname{Adj}}
\nc{\Order}{\operatorname{O}}  \nc{\inj}{\operatorname{inj}} \nc{\proy}{\operatorname{pr}}
\nc{\vol}{\operatorname{vol}} \nc{\Diag}{\operatorname{Dg}} \nc{\Diagg}{\operatorname{Diag}}
\nc{\Spec}{\operatorname{Spec}} \nc{\Ima}{\operatorname{Im}} \nc{\Rea}{\operatorname{Re}}
\nc{\spann}{\operatorname{span}} \nc{\Aff}{\operatorname{Aff}} \nc{\E}{\operatorname{E}} \nc{\id}{\operatorname{id}} \nc{\dete}{\operatorname{det}} \nc{\Crit}{\operatorname{Crit}} \nc{\val}{\operatorname{val}}
\theoremstyle{plain}
\newtheorem{theorem}{Theorem}[section]
\newtheorem{proposition}[theorem]{Proposition}
\newtheorem{lemma}[theorem]{Lemma}
\theoremstyle{definition}
\newtheorem{definition}[theorem]{Definition}
\newtheorem{assumption}[theorem]{Assumption}
\theoremstyle{remark}
\newtheorem{remark}[theorem]{Remark}
\newtheorem{example}[theorem]{Example}
\title[Bismut Ricci flat metrics on homogeneous spaces]{Bismut Ricci flat generalized metrics on compact homogeneous spaces (including a Corrigendum)}
\author{Jorge Lauret}  \author{Cynthia Will}
\address{FaMAF, Universidad Nacional de C\'ordoba and CIEM, CONICET (Argentina)}
\email{jorgelauret@unc.edu.ar} \email{cynthia.will@unc.edu.ar}
\thanks{This research was partially supported by grants from Univ. Nac. de C\'ordoba and Foncyt (Argentina)}
\date{\today}
\begin{document}

\maketitle

\begin{abstract}
A generalized metric on a manifold $M$, i.e., a pair $(g,H)$, where $g$ is a Riemannian metric and $H$ a closed $3$-form, is a fixed point of the generalized Ricci flow if and only if $(g,H)$ is Bismut Ricci flat: $H$ is $g$-harmonic and $\ricci(g)=\unc H_g^2$.  On any homogeneous space $M=G/K$, where $G=G_1\times G_2$ is a compact semisimple Lie group with two simple factors, under some mild assumptions, we exhibit a Bismut Ricci flat $G$-invariant generalized metric, which is proved to be unique among a $4$-parameter space of metrics in many cases, including when $K$ is neither abelian nor semisimple.  On the other hand, if $K$ is simple and the standard metric is Einstein on both $G_1/\pi_1(K)$ and $G_2/\pi_2(K)$, we give a one-parameter family of Bismut Ricci flat $G$-invariant generalized metrics on $G/K$ and show that it is most likely pairwise non-homothetic by computing the ratio of Ricci eigenvalues.  This is proved to be the case for every space of the form $M=G\times G/\Delta K$ and for $M^{35}=\SO(8)\times\SO(7)/G_2$.   

A Corrigendum has been added in Appendix \ref{app}.  
\end{abstract}

\tableofcontents

\section{Introduction}\label{intro}

In generalized Riemannian geometry, a {\it generalized metric} on an exact Courant algebroid over a manifold $M$ is encoded in a pair $(g,H)$, where $g$ is a Riemannian metric and $H$ a closed $3$-form on $M$.  A very natural direction to make these structures evolve is provided by the Ricci tensor of the corresponding {\it Bismut connection}, i.e., the unique metric connection on $(M,g)$ with torsion $3$-covariant tensor $g(T_\cdot\cdot,\cdot)$ equal to $H$, giving rise to the following evolution equation for a family of generalized metrics $(g(t),H(t))$ called the {\it generalized Ricci flow}:
\begin{equation}\label{GRF}
\left\{\begin{array}{l}
\dpar g(t) = -2\ricci(g(t)) +\unm (H(t))_{g(t)}^2, \\ \\
\dpar H(t) = -dd_{g(t)}^*H(t),
\end{array}\right.
\end{equation}
where $H_g^2:=g(\iota_\cdot H,\iota_\cdot H)$.  We refer to the recent book \cite{GrcStr} and the references given in \S\ref{preli1} for further mathematical and physical motivation, including applications to the pluriclosed flow of SKT structures and the generalized K\"ahler Ricci flow.       

As for any curvature flow, in order to understand its behavior, it is crucial to study the existence of fixed points, which in the case of the generalized Ricci flow are precisely {\it Bismut Ricci flat} (BRF for short) generalized metrics $(g,H)$ (or {\it generalized Einstein metrics}), i.e.,  
\begin{equation}\label{BRF-def-intro}
\ricci(g)=\unc H_g^2 \quad\mbox{and}\quad \mbox{$H$ is $g$-harmonic}.  
\end{equation} 
Note that $(g,0)$ with $g$ Ricci flat provide trivial examples, so we require $H\ne 0$ from now on, which implies the necessary topological condition $b_3(M)>0$. 

Examples of BRF are of course provided by generalized metrics such that the Bismut connection is {\it flat}, although this turns out to be extremely strong.  It is well known that the only compact simply connected Riemannian manifolds carrying a flat Bismut connection with closed torsion $3$-form are compact semisimple Lie groups endowed with a bi-invariant generalized metric of the form $(g_b,\pm H_b)$, where $g_b$ is a bi-invariant metric and $H_b:=g_b([\cdot,\cdot],\cdot)$, the attached Cartan $3$-form (see \cite{CrtKrs, AgrFrd, GrcStr}).  On the contrary, the study of non-flat BRF generalized metrics is only in its infancy, even in the homogeneous case.  It is unknown whether a compact simple Lie group admits a left-invariant BRF generalized metric different from the bi-invariant one or not.  The only other known homogeneous BRF generalized metrics were recently given by Podest\`a and Raffero in \cite{PdsRff1} on $S^2\times S^3$, one for each homogeneous space presentation $\SU(2)\times\SU(2)/S^1_{p,q}$, and in \cite{PdsRff2}, one on any homogeneous space of the form $G\times G/\Delta K$, where $(G,K)$ is a symmetric pair such that $\rank G=\rank K$ and $\Delta K$ denotes the usual diagonal embedding.  Note that BRF condition \eqref{BRF-def-intro} implies that $\ricci(g)\geq 0$, which essentially gives that $M$ is necessarily compact in the homogeneous setting.   

The topological obstruction $b_3(M)>0$ is stronger than one may think for a compact homogeneous manifold.  According to \cite{H3}, if $M=G/K$ is a homogeneous space, where $G$ is compact and semisimple with $s$ simple factors, then $b_3(M)\leq s-1$, unless $K$ is trivial (in particular, $G$ simple and $K$ non-trivial is excluded), and in the case when $K$ projects non-trivially on all the factors of $G$, equality $b_3(M)=s-1$ holds if and only if $G/K$ is {\it aligned}: the Killing forms satisfy that 
\begin{equation*} 
\kil_{\ggo_i}(Z_i,W_i) = \tfrac{1}{c_i}\kil_\ggo(Z,W), \qquad\forall Z,W\in\kg, \quad i=1,\dots,s,
\end{equation*}   
for some fixed $c_1,\dots,c_s>0$, where $\ggo=\ggo_1\oplus\dots\oplus\ggo_s$ is the decomposition of the Lie algebra of $G$ in simple factors.  Note that $G/K$ is automatically aligned if $\kg$ is simple or one-dimensional. 

In this paper we study, via a unified approach, the existence of BRF generalized metrics on any homogeneous space of a semisimple Lie group with two simple factors.  This case already demands very heavy computations, so we have postponed the case of more simple factors for future work.  The class of homogeneous spaces considered is huge and unclassifiable, due to the several different possible embeddings for the isotropy subgroups.   

We therefore consider $M=G/K$, where $G=G_1\times G_2$ and $G_1$, $G_2$ are compact simple Lie groups.  It is natural to assume that $K$ projects non-trivially on both $G_1$ and $G_2$, otherwise $M$ is the product of a homogeneous space and a Lie group.  We also have to assume that $b_3(M)>0$, which leads us to an aligned $M=G/K$ with $b_3(M)=1$.  In particular, $\pi_i(\kg)\simeq\kg$ for $i=1,2$.  The following technical property is also assumed: as $\Ad(K)$-representations, none of the irreducible components of the isotropy representations of $G_1/\pi_1(K)$ and $G_2/\pi_2(K)$ is equivalent to any of the simple or $1$-dimensional factors of $\kg$.  This extra assumption might not be necessary, we use it to apply \cite[Section 6.1]{H3} in order to obtain that certain canonical closed $3$-form is harmonic with respect to any metric in our search $4$-parameter space of $G$-invariant metrics.  The following is the first main result of the paper.   

\begin{theorem}\label{main} (See Theorem \ref{BRFggs2}).
Let $M=G/K$ be a homogeneous space as above.  Then $M$ admits a $G$-invariant Bismut Ricci flat generalized metric.  
\end{theorem}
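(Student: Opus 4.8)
The plan is to convert the BRF system \eqref{BRF-def-intro} into a finite-dimensional algebraic problem on the space of invariant metrics and then exhibit a positive solution. First I would fix the reductive decomposition $\ggo=\kg\oplus\mg$ with respect to a bi-invariant background inner product (say $\ip=-\kil_{\ggo_1}-\kil_{\ggo_2}$) and refine it as $\mg=\pg_0\oplus\pg_1\oplus\pg_2$, where $\kg_i:=\pi_i(\kg)$, $\pg_0$ is the $\ip$-orthogonal complement of the diagonal $\kg\hookrightarrow\kg_1\oplus\kg_2$ inside $\kg_1\oplus\kg_2$, and $\ggo_i=\kg_i\oplus\pg_i$ is the isotropy splitting of $G_i/\pi_i(K)$. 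Since $b_3(M)=1$ there is, up to scaling, a unique candidate closed $3$-form $H$; by the assumption on the isotropy representations together with \cite[Section 6.1]{H3}, this $H$ is $g$-harmonic for \emph{every} metric in the search family. Hence the harmonicity condition in \eqref{BRF-def-intro} holds automatically, and the whole problem reduces to solving the single tensorial equation $\ricci(g)=\unc H_g^2$ on this family.

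Next I would restrict to the natural family of diagonal invariant generalized metrics $g=x_0\,\ip|_{\pg_0}+x_1\,\ip|_{\pg_1}+x_2\,\ip|_{\pg_2}$, with $x_0,x_1,x_2>0$, together with $H=c\,H_0$, $c\neq0$, where $H_0$ is a fixed harmonic representative (a $4$-parameter family of generalized metrics). Both $\ricci(g)$ and $H_g^2$ are $\Ad(K)$-invariant symmetric $2$-tensors, hence by Schur's lemma they are block-diagonal with respect to $\pg_0\oplus\pg_1\oplus\pg_2$ and reduce to one scalar per summand. I would then compute the three Ricci eigenvalues $r_0,r_1,r_2$ via the standard homogeneous formula in terms of the background Killing form and the structure constants $[ijk]$ of the decomposition, and the three eigenvalues of $\unc H_g^2$ from the explicit shape of $H_0$, whose components are governed by the projections onto $\pg_0$ of the $\kg_i$-parts of the brackets $[\pg_i,\pg_i]$; this makes each eigenvalue of $\unc H_g^2$ proportional to $c^2$ with an explicit dependence on $x_0,x_1,x_2$.

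The BRF equation then becomes the system $r_j(x)=\unc(H_g^2)_j$ for $j=0,1,2$, three equations in the parameters $(x_0,x_1,x_2,c)$. Using the scaling symmetry $(g,H)\mapsto(\lambda g,\lambda H)$, which preserves BRF, I would normalize $x_0=1$, leaving three equations in $(x_1,x_2,c^2)$. Eliminating $c^2$ between the $\pg_1$- and $\pg_2$-equations expresses everything in terms of the ratios $x_1,x_2$ and the structural constants (the alignment constants $c_1,c_2$ and the Casimir/Killing constants of $\pg_1,\pg_2$ inside the simple factors), reducing the problem to a single scalar equation whose positive solutions I must locate. The main obstacle is precisely to prove that this reduced equation admits a solution with $x_1,x_2>0$ and $c^2>0$ for \emph{all} admissible values of the structural constants: this amounts to a sign and limit analysis (evaluating the relevant rational function as a parameter tends to $0$ and to $\infty$ and invoking the intermediate value theorem), and the difficulty is to control the signs of the combinations of Killing and alignment constants uniformly over the unclassifiable class of spaces considered.

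Alternatively, one could argue variationally: BRF metrics in the family should be the critical points of the generalized scalar curvature $g\mapsto\scalar(g)-\unc|H|_g^2$ restricted to invariant metrics of unit volume, the trace of $\ricci(g)=\unc H_g^2$ giving the scalar relation and the Lagrange multiplier vanishing by the scaling homogeneity. Existence would then follow from showing this functional attains an interior critical point, the obstacle again being to rule out escape of optimizing sequences to the boundary of the space of invariant metrics, where some $x_i\to0$ or $x_i\to\infty$ and the metric degenerates.
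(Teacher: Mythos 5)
Your overall framework is the same as the paper's: fix the aligned decomposition $\pg=\pg_1\oplus\pg_2\oplus\pg_3$ (your $\pg_0$ is the paper's $\pg_3$), use \cite[Section 6.1]{H3} to get harmonicity of the canonical closed $3$-form for every diagonal metric, and thereby reduce BRF to an algebraic system for the diagonal coefficients. However, there are two genuine gaps. First, your appeal to Schur's lemma to get ``one scalar per summand'' is false in general: $\pg_1,\pg_2$ are the isotropy representations of $G_i/\pi_i(K)$ and need not be irreducible, and $\pg_0\simeq\kg$ splits into $t+1$ inequivalent pieces $\pg_3^0\oplus\dots\oplus\pg_3^t$. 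On $\pg_1,\pg_2$ the BRF condition is an \emph{operator} equation mixing $I_{\pg_i}$ with the Casimir operator $\cas_{\chi_i}$ (which is a multiple of the identity only when the standard metric on $G_i/\pi_i(K)$ is Einstein), and on $\pg_0$ one gets a separate scalar equation for each ideal $\kg_l$ of $\kg$, with coefficient $\lambda_l$. So the system is substantially larger than three scalar equations in three unknowns; indeed, it is exactly this overdetermination that drives the paper's uniqueness statements (Theorem \ref{BRFggs2}, (ii) and (iii)), and it makes your elimination-plus-dimension-count plan ill-posed as stated.

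Second, and decisively, the existence step is never carried out: you defer it to ``a sign and limit analysis'' via the intermediate value theorem uniformly over an unclassifiable family of structural constants, and acknowledge this as the main obstacle. That deferred step \emph{is} the theorem. The paper does not argue by IVT at all for this statement; it exhibits the explicit candidate $x_1=\tfrac{1}{z_1}$, $x_2=1$, $x_3=\tfrac{z_1+1}{z_1}$ (with background $g_b=z_1(-\kil_{\ggo_1})+\tfrac{1}{c_1-1}(-\kil_{\ggo_2})$ and $z_1=c_1-1$) and verifies directly that it satisfies all equations: this choice makes the Casimir coefficients in \eqref{BRF1ra}--\eqref{BRF2da} cancel identically (so no Einstein-type hypothesis on $\cas_{\chi_i}$ is needed), and makes all the $\lambda_l$-equations \eqref{BRF3ra-2} hold simultaneously regardless of the values $\lambda_1,\dots,\lambda_t$. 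Without producing such a candidate (or otherwise controlling the signs you mention), your proposal is a plausible reduction scheme but not a proof. A lesser remark: you freeze the background decomposition at $z_1=z_2=1$, whereas the paper lets $g_b$ vary precisely because the canonical form $H_Q$ depends on the $g_b$-orthogonal projection onto $\kg$; the Corrigendum (Remark \ref{rem000}) shows a posteriori that the solution is also diagonal in the standard decomposition, but that is a nontrivial fact you would still have to establish within your fixed-decomposition setup.
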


The lowest dimensional new example is $M^{10}=\SU(2)\times\SU(3)/S^1$.  The search space of generalized metrics on each $M=G/K$ is given as follows.  For any background bi-invariant metric on $G$, say
\begin{equation*}
g_b=z_1(-\kil_{\ggo_1})+\tfrac{1}{c_1-1}(-\kil_{\ggo_2}), \qquad z_1>0,   
\end{equation*}
we consider the $g_b$-orthogonal reductive decomposition $\ggo=\kg\oplus\pg$, the normal metric on $M=G/K$ determined by $g_b|_{\pg\times\pg}$ and the $g_b$-orthogonal $\Ad(K)$-invariant decomposition 
\begin{equation}\label{dec1}
\pg=\pg_1\oplus\pg_2\oplus\pg_3,
\end{equation}
where $\pg_i$ is equivalent to the isotropy representation of the homogeneous space $M_i=G_i/\pi_i(K)$ for $i=1,2$ and $\pg_3$ is equivalent to the adjoint representation $\kg$.  According to \cite[Section 6.1]{H3}, the $G$-invariant $3$-form given by
\begin{equation*}
H_0(X,Y,Z) := Q([X,Y],Z) + Q([X,Y]_\kg,Z) - Q([X,Z]_\kg,Y) + Q([Y,Z]_\kg,X),
\end{equation*}
for all $X,Y,Z\in\pg$, where $Q=\kil_{\ggo_1}-\tfrac{1}{c_1-1}\kil_{\ggo_2}$, is $g$-harmonic for any $G$-invariant metric $g=(x_1,x_2,x_3)_{g_b}$ defined by 
\begin{equation*}
g=x_1g_b|_{\pg_1\times\pg_1}+x_2g_b|_{\pg_2\times\pg_2}+x_3g_b|_{\pg_3\times\pg_3}, \qquad x_1,x_2,x_3>0.
\end{equation*} 
Note that $H_0$ also depends on $z_1$ since the projection $[X,Y]_\kg$ of $[X,Y]$ on $\kg$ is taken with respect to the $g_b$-orthogonal decomposition $\ggo=\kg\oplus\pg$.  This explains why we have let the background metric $g_b$ vary in our search space.  In the case when $G_1/\pi_1(K)$ and $G_2/\pi_2(K)$ are both isotropy irreducible spaces, their isotropy representations are inequivalent, $[\kg,\kg]$ is simple and $\dim{\kg_0}=0,1$, these represent all $G$-invariant metrics on $M=G/K$ for each fixed $z_1>0$.  The Ricci curvature of these metrics is computed in \S\ref{rical-sec} and the symmetric form $(H_0)_g^2$ in \S\ref{HQ2-sec}.  

Explicitly, the BRF generalized metric in Theorem \ref{main} is given by $(g_0,H_0)$, where 
$$
g_0:=\left(\tfrac{1}{z_1},1,\tfrac{z_1+1}{z_1}\right)_{g_b}, \qquad z_1=c_1-1.
$$ 
Concerning uniqueness, always among structures of the form $(g=(x_1,x_2,x_3)_{g_b},H_0)$, we prove that $(g_0,H_0)$ is the only one up to scaling which is Bismut Ricci flat if $\kg$ has a nontrivial center (any $z_1>0$ actually  works when $\kg$ is abelian), as well as if $\kg$ is not abelian and the standard metric is not Einstein on $M_i=G_i/\pi_i(K)$, $i=1,2$  (see \S\ref{BRF-sec} and Theorem \ref{BRFggs2} for further details).    

On the other hand, the existence of curves is the second main result.  

\begin{theorem}\label{main2} (See Theorems \ref{BRFggs2-kss} and \ref{BRFcurve}).
Assume that $\kg$ is semisimple, $\kil_{\pi_i(\kg)}=a_i\kil_{\ggo_i}$ for some constant $a_i$, $i=1,2$ (e.g., $\kg$ simple), and the standard (or Killing) metric is Einstein on both $G_1/\pi_1(K)$ and $G_2/\pi_2(K)$. 
\begin{enumerate}[{\rm (i)}]
\item There is a one-parameter family $(g(z_1),H(z_1))$, $z_1>0$ of Bismut Ricci flat $G$-invariant generalized metrics on the homogeneous space $M=G/K$.  

\item If $G_1=G_2$ and $K$ is a simple subgroup diagonally embedded, then the curve in part (i) is pairwise non-homothetic around $(g_0,H_0)$.  
\end{enumerate}
\end{theorem}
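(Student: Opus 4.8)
The plan is to reduce both statements to the explicit formulas for $\ricci(g)$ and $(H_0)_g^2$ obtained in \S\ref{rical-sec} and \S\ref{HQ2-sec}. Since $H_0$ is $g$-harmonic for every metric $g=(x_1,x_2,x_3)_{g_b}$ (by the assumption invoking \cite[Section 6.1]{H3}), the pair $(g,H_0)$ is Bismut Ricci flat precisely when the tensor identity $\ricci(g)=\unc(H_0)_g^2$ holds. Both sides are $G$-invariant symmetric $2$-tensors; the Killing--Einstein hypothesis on $M_1$ and $M_2$ guarantees in particular that $\ricci(g)$, and similarly $(H_0)_g^2$, is scalar on each of $\pg_1$ and $\pg_2$, so both sides are block-scalar along $\pg=\pg_1\oplus\pg_2\oplus\pg_3$. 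The identity then reduces to three scalar equations $r_i=\unc h_i$, $i=1,2,3$, in the unknowns $x_1,x_2,x_3$, whose coefficients depend on $z_1$ and on the structure constants of $\ggo=\kg\oplus\pg$.

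For part (i) I would feed the hypotheses $\kil_{\pi_i(\kg)}=a_i\kil_{\ggo_i}$ and the Killing--Einstein condition on each $M_i$ into these coefficients. The Einstein condition makes the relevant Casimir operators on $\pg_1$ and on $\pg_2$ scalar, which is exactly what removes the rigidity of Theorem \ref{main}, where the three equations force $z_1=c_1-1$. Concretely, I expect the $\pg_1$- and $\pg_2$-equations to become compatible for a whole range of $z_1$, so that---after using the simultaneous scaling invariance $(g,H_0)\mapsto(\lambda g,\lambda H_0)$ of the Bismut Ricci flat condition---one solves for a positive triple $(x_1(z_1),x_2(z_1),x_3(z_1))$ for every $z_1>0$. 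This produces the one-parameter family $(g(z_1),H(z_1))$ and recovers $(g_0,H_0)$ at $z_1=c_1-1$.

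For part (ii) one has $M=G\times G/\Delta K$, where $c_1=c_2=2$ (so that $g_0$ sits at $z_1=1$), $a_1=a_2$, and the isotropy data on $\pg_1$ and $\pg_2$ coincide; the Ricci eigenvalues $r_1,r_2,r_3$ are thus fully explicit functions of $z_1$. Since a homothety preserves the collection of ratios of Ricci eigenvalues, it suffices to produce a scale-invariant ratio, say $\rho(z_1)=r_3/r_1$, and to show that it separates parameters near $z_1=1$. The genuine symmetry to account for is the factor swap $(g_1,g_2)\mapsto(g_2,g_1)$, which preserves $\Delta K$, fixes $\pg_3$, interchanges $\pg_1\leftrightarrow\pg_2$, and---after rescaling---sends $z_1$ to $1/z_1$; being an isometry between $g(z_1)$ and $g(1/z_1)$, it forces $\rho(z_1)=\rho(1/z_1)$, so $z_1=1$ is a critical point of $\rho$. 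I would then compute $\rho$ explicitly and check that this critical point is nondegenerate, whence $\rho$ is strictly monotone on each side of $z_1=1$; this gives $\rho(z_1)=\rho(z_1')$ only for $z_1'\in\{z_1,1/z_1\}$, i.e.\ the curve is pairwise non-homothetic, modulo the factor swap, in a neighbourhood of $(g_0,H_0)$.

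The main obstacle is the bookkeeping behind part (i): one must verify that the Killing--Einstein hypothesis really renders the $\pg_1$- and $\pg_2$-equations simultaneously solvable for all $z_1$, not merely at $z_1=c_1-1$, and that the resulting $x_i(z_1)$ stay positive. Tracing how $\kil_{\pi_i(\kg)}=a_i\kil_{\ggo_i}$ governs the mixed bracket terms $[\cdot,\cdot]_\kg$ that enter both $\ricci(g)$ and $(H_0)_g^2$ is the technical heart of the argument, and is where the hypotheses are used in full.
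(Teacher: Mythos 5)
Your part (i) is the paper's own route (Theorem \ref{BRFggs2-kss}): the Einstein hypothesis on the $G_i/\pi_i(K)$ makes $\cas_{\chi_i}=\kappa_i I_{\pg_i}$, the BRF condition becomes three scalar equations, the $\pg_1$- and $\pg_2$-equations are quadratics solved explicitly for $x_1,x_2$ in terms of $x_3$ (formulas \eqref{x1}--\eqref{x2}), and everything reduces to the single equation \eqref{BRF3ra-3} for $x_3$. The step you postpone (``I expect the equations to become compatible for a whole range of $z_1$'') is exactly the nontrivial one; the paper settles it by checking that the left-hand side of \eqref{BRF3ra-3}, as a function of $x_3$, is negative near $0$ and tends to $+\infty$ as $x_3\to\infty$, so a positive root exists for every $z_1>0$. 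So your part (i) has a fillable gap but the same skeleton.

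Part (ii) is where you genuinely diverge from the paper, and the divergence is decisive --- against the paper. Theorem \ref{BRFcurve} proves non-homothety by computing $r_{13}'(1)\ne0$ via the implicit function theorem (Remark \ref{F}). Your factor-swap argument, made precise, proves the opposite: $\sigma(g_1,g_2)=(g_2,g_1)$ pulls back $g_b(z_1)$ to $z_1\,g_b(1/z_1)$ and interchanges the reductive decompositions at parameters $z_1$ and $1/z_1$ (note it does not fix $\pg_3$: it maps $\pg_3(1/z_1)$ onto $\pg_3(z_1)$, as these complements depend on the background metric). Hence $\sigma^*(g(z_1),H(z_1))$ is again a BRF pair of the diagonal form $\bigl((x_1,x_2,x_3)_{g_b(1/z_1)},\pm H(1/z_1)\bigr)$, with coordinates $(z_1x_2(z_1),\,z_1x_1(z_1),\,z_1x_3(z_1))$, and the local uniqueness of the root of \eqref{BRF3ra-3} near $(x_3,z_1)=(2,1)$ --- the paper's own Remark \ref{F} --- forces $\sigma^*g(z_1)=g(1/z_1)$. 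So $g(z_1)$ and $g(1/z_1)$ are isometric, $r_{13}(z_1)=r_{13}(1/z_1)$, and $r_{13}'(1)=0$, flatly contradicting the paper's value $r_{13}'(1)=\tfrac{3\lambda(2\kappa(\lambda-1)+3\lambda-1)}{32(\lambda-1)^3}\ne 0$. It is the paper that is wrong: the Corrigendum (Appendix \ref{app}) reports an error in Proposition \ref{HQ2-s2}, after whose correction the $x_3$-equation collapses to \eqref{eq}, forcing $x_1=\tfrac1{z_1}$, $x_2=1$, $x_3=\tfrac{z_1+1}{z_1}$ for \emph{every} $z_1$; the ``curve'' is a single metric $g_0$ written with respect to varying reductive decompositions (Remark \ref{rem000}), and Theorem \ref{BRFcurve} --- part (ii) of the statement you were asked to prove --- is false.

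Consequently your proposal cannot be completed as a proof, but through no fault of the strategy: the swap-isometry $g(z_1)\cong g(1/z_1)$ with $z_1\ne 1/z_1$ already violates the definition of pairwise non-homothetic in \S\ref{curve-sec}, so your symmetry observation \emph{disproves} part (ii) rather than reducing it to a nondegeneracy check. That final check would fail anyway: with the corrected formulas, $r_1\equiv\unc$ and $r_3\equiv\tfrac{c_1}{4}(1-\lambda)$ for all $z_1$ (Remark \ref{rem1}), so $\rho=r_{13}$ is constant and its critical point at $z_1=1$ is maximally degenerate. Had you pushed your symmetry argument against the paper's explicit derivative computation, you would have exposed precisely the contradiction that the Corrigendum traces back to the spurious $\sqrt{B_4}$ in Proposition \ref{HQ2-s2}.
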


The possibilities for the spaces $G_i/\pi_i(K)$ are described in Remarks \ref{rem2} and \ref{posib}, respectively.  
The lowest dimensional example for part (ii) is $M^{13}=\SU(3)\times\SU(3)/\SO(3)$.

The coordinates $x_1$ and $x_2$ of the metric $g(z_1)$ can be written in terms of $x_3$ in a relatively simple way, but $x_3$ is the solution of a quite involved equation in terms of $z_1$ and some algebraic constants attached to $G/K$ (see \eqref{BRF3ra-3} for further details).  This makes the problem of whether or not two metrics in the family are isometric up to scaling quite difficult to approach in a unified way, even with a formula for the ratio of Ricci eigenvalues at hand (see \S\ref{curve-sec}).  

Beyond the case considered by Theorem \ref{main2}, (ii), we also prove that the one-parameter family $(g(z_1),H(z_1))$ is indeed pairwise non-homothetic around $(g_0,H_0)$ on the homogeneous space $M^{35}=\SO(8)\times\SO(7)/G_2$ (see Example \ref{curve}). 

Finally, in \S\ref{simple-sec}, we study the case when $M=G$ is a compact Lie group, obtaining the following negative existence result:  other than $(g_b,\pm H_b)$, there are no BRF generalized metrics on $G$ of the form $(g,\pm H_b)$, where $g$ runs among all left-invariant metrics on $G$, or of the form $(g_b,H)$, where $H$ is any left-invariant closed $3$-form on $G$.

\vs \noindent {\it Acknowledgements.}  The authors are grateful with the anonymous referee for many helpful comments.

\section{Preliminaries}\label{preli}

\subsection{Generalized metrics and Bismut connection}\label{preli1}
We refer to the recent book \cite{GrcStr} and the articles \cite{CrtKrs, Grc, Lee, PdsRff1, PdsRff2, RubTpl, Str1, Str2} for further information on the subject of this subsection.  

A metric connection on a given compact Riemannian manifold $(M,g)$ is called {\it Bismut} if its torsion $T$ satisfies that the $3$-covariant tensor
$$
H(X,Y,Z) := g(T_XY,Z), \qquad\forall X,Y,Z\in\chi(M),
$$
is a $3$-form on $M$.  In that case, the Bismut connection $\nabla^B$ is given by 
$$
g(\nabla^B_XY,Z)=g(\nabla^g_XY,Z)+\unm H(X,Y,Z), \qquad\forall X,Y,Z\in\chi(M), 
$$
where $\nabla^g$ is the Levi Civita connection of $(M,g)$, and if $H$ is closed then the pair $(g,H)$ is called a {\it generalized metric}.  The Ricci tensor of $\nabla^B$ is given by 
$$
\ricci^B(g,H) = \ricci(g) - \unc H_g^2 - \unm d_g^*H,
$$
where $\ricci(g)$ is the Ricci tensor of the metric $g$, $H_g^2$ is the non-negative definite symmetric $2$-tensor defined by
$$
H_g^2(X,Y) := g(\iota_XH,\iota_YH), \qquad\forall X,Y\in\chi(M), 
$$  
and $d_g^*$ is the adjoint of $d$ with respect to $g$.  Thus $\ricci(g) - \unc H_g^2$ and $- \unm d_g^*H$ are respectively the symmetric and skew-symmetric parts of $\ricci^B(g,H)$ and so the generalized metric $(g,H)$ is {\it Bismut Ricci flat} (BRF for short), i.e., $\ricci^B(g,H)=0$, if and only if 
\begin{equation}\label{BRF-def}
\ricci(g)=\unc H_g^2 \quad\mbox{and}\quad \mbox{$H$ is $g$-harmonic}.  
\end{equation} 
Note that since $H$ is closed, $H$ is $g$-harmonic (i.e., $\Delta_gH=0$, where $\Delta_g:=dd_g^*+d_g^*d$ is the Hodge Laplacian operator) if and only if $H$ is coclosed (i.e. $d_g^*H=0$).  It is easy to see that $(g,H)$ is BRF if and only if $(cg,\pm cH)$ is BRF for any $c>0$.  

BRF metrics are also called {\it generalized Einstein metrics} and are precisely the fixed points of the {\it generalized Ricci flow}, given by the evolution equation \eqref{GRF} (see \cite{GrcStr, Lee} and references therein).

\subsection{Aligned homogeneous spaces}\label{preli3}
In this section we review the class of homogeneous spaces with the richest third cohomology (other than Lie groups), i.e., $b_3(G/K)=s-1$ if $G$ has $s$ simple factors, called {\it aligned} homogeneous spaces.  See \cite{H3} for a more detailed treatment.  

Let $M=G/K$ be a homogeneous space, where $G$ is a compact, connected and semisimple Lie group and $K$ is a connected closed subgroup.  We fix decompositions for the corresponding Lie algebras, 
\begin{equation}\label{decss}
\ggo=\ggo_1\oplus\dots\oplus\ggo_s, \qquad \kg=\kg_0\oplus\kg_1\oplus\dots\oplus\kg_t, 
\end{equation}
where the $\ggo_i$'s and $\kg_j$'s are simple ideals of $\ggo$ and $\kg$, respectively, and $\kg_0$ is the center of $\kg$.  If $\pi_i:\ggo\rightarrow\ggo_i$ are the usual projections, then we set $Z_i:=\pi_i(Z)$ for any $Z\in\ggo$.  The Killing form of a Lie algebra $\hg$ will always be denoted by $\kil_\hg$. 

\begin{definition}\label{alig-def-2}
A homogeneous space $G/K$ as above is said to be {\it aligned} when there exist $c_1,\dots,c_s>0$ such that
\begin{equation}\label{al2} 
\kil_{\ggo_i}(Z_i,W_i) = \tfrac{1}{c_i}\kil_\ggo(Z,W), \qquad\forall Z,W\in\kg, \quad i=1,\dots,s.
\end{equation}   
In other words, the ideals $\kg_j$'s are uniformly embedded on each $\ggo_i$ in some sense: the multiple $\frac{1}{c_i}$ is independent of $j$.
\end{definition}

Note that $G/K$ is automatically aligned if $\kg$ is simple or one-dimensional.  The following properties of an aligned homogeneous space $G/K$ easily follow: 
\begin{enumerate}[{\rm (i)}]
\item $\frac{1}{c_1}+\dots+\frac{1}{c_s}=1$.  

\item $\pi_i(\kg)\simeq\kg$ for all $i=1,\dots,s$.  

\item The Killing constants, defined by 
\begin{equation}\label{cij}
\kil_{\pi_i(\kg_j)} = c_{ij}\kil_{\ggo_i}|_{\pi_i(\kg_j)\times\pi_i(\kg_j)},
\end{equation}
satisfy the following alignment property:
$$
(c_{1j},\dots,c_{sj}) = \lambda_j(c_1,\dots,c_s) \quad\mbox{for some}\quad \lambda_j>0, \qquad\forall j=1,\dots,t.  
$$
Indeed, for any $Z,W\in\kg_j$ and $i=1,\dots,s$,
$$
\kil_\kg(Z,W)=\kil_{\kg_j}(Z,W)=\kil_{\pi_i(\kg_j)}(Z_i,W_i) = c_{ij}\kil_{\ggo_i}(Z_i,W_i) =\tfrac{c_{ij}}{c_i}\kil_\ggo(Z,W).
$$ 
\item There exists an inner product $\ip$ on $\kg_0$ (given by $-\kil_{\ggo}|_{\kg_0\times\kg_0}$) such that 
$$
\kil_{\ggo_i}(Z_i,W_i)=-\tfrac{1}{c_i}\la Z,W\ra, \qquad\forall Z,W\in\kg_0.    
$$ 
\item The Killing form of $\kg_j$ is given by
\begin{equation}\label{al1}
\kil_{\kg_j}=\lambda_j\kil_{\ggo}|_{\kg_j\times\kg_j}, \qquad\forall j=1,\dots,t.  
\end{equation}
\end{enumerate}
The original definition given in \cite[Definition 4.7]{H3} consists of parts (i), (iii) and (iv) above.  Under the assumption that $\pi_i(\kg)\ne 0$ for all $i=1,\dots,s$, $G/K$ is aligned if and only if 
there exists an inner product $\ip$ on $\kg$ such that $Q|_{\kg\times\kg}$ coincides with $\ip$ up to scaling for any bi-invariant symmetric bilinear form $Q$ on $\ggo$ (see \cite[Proposition 4.10]{H3}).   

We assume from now on that $M=G/K$ is an aligned homogeneous space with $s=2$, i.e., 
\begin{equation}\label{decs}
\ggo=\ggo_1\oplus\ggo_2, \qquad \kg=\kg_0\oplus\kg_1\oplus\dots\oplus\kg_t. 
\end{equation}
For any given bi-invariant metric on $G$, say
\begin{equation}\label{gbdef}
g_b=z_1(-\kil_{\ggo_1})+z_2(-\kil_{\ggo_2}), \qquad z_1,z_2>0,   
\end{equation}
we consider the $g_b$-orthogonal reductive decomposition $\ggo=\kg\oplus\pg$, the normal metric on $M=G/K$ determined by $g_b|_{\pg\times\pg}$ and the $g_b$-orthogonal $\Ad(K)$-invariant decomposition 
\begin{equation}\label{dec1}
\pg=\pg_1\oplus\pg_2\oplus\pg_3,
\end{equation}
provided by \cite[Proposition 5.1]{H3}.  As $\Ad(K)$-representations, $\pg_i$ is equivalent to the isotropy representation of the homogeneous space $M_i=G_i/\pi_i(K)$ for $i=1,2$, i.e., $\ggo_i=\pi_i(\kg)\oplus\pg_i$, and 
$$
\pg_3 := \left\{ \left(Z_1,-\tfrac{c_2z_1}{c_1z_2}Z_2\right):Z\in\kg\right\},
$$ 
so $\pg_3$ is equivalent to the adjoint representation $\kg$.  Under the following assumption, any $G$-invariant $2$-form $\omega$ necessarily satisfies that $\omega(\pg_i,\pg_3)=0$ for $i=1,2$.  

\begin{assumption}\label{assum}
None of the irreducible components of $\pg_1,\pg_2$ is equivalent to any of the simple factors of $\kg$ as $\Ad(K)$-representations and either $\zg(\kg)=0$ or the trivial representation is not contained in any of $\pg_1,\pg_2$ (see \cite[Section 6]{H3} for more details on this assumption).
\end{assumption}

According to \cite[Proposition 4.1 and (12)]{H3}, each bi-invariant symmetric bilinear form on $\ggo$, 
\begin{equation}\label{Qdef}
Q=y_1\kil_{\ggo_1}+y_2\kil_{\ggo_2} \quad\mbox{such that}\quad 
\tfrac{y_1}{c_1}+\tfrac{y_2}{c_2}=0 \quad(\mbox{i.e.},\; Q|_{\kg\times\kg}=0),
\end{equation}
defines a $G$-invariant closed $3$-form $H_Q$ on $M$ by
\begin{equation}\label{HQ-def}
H_Q(X,Y,Z) := Q([X,Y],Z) + Q([X,Y]_\kg,Z) - Q([X,Z]_\kg,Y) + Q([Y,Z]_\kg,X),
\end{equation}
for all $X,Y,Z\in\pg$, where $\lb_\kg$ denotes the projection of the Lie bracket $\lb$ on $\kg$ with respect to $\ggo=\kg\oplus\pg$.  Moreover, every class in $H^3(G/K)$ has a unique representative of the form $H_Q$, which implies that $b_3(G/K)=1$.  

A $G$-invariant metric of the form
\begin{equation}\label{gg-harm}
g=x_1g_b|_{\pg_1\times\pg_1}+x_2g_b|_{\pg_2\times\pg_2}+x_3g_b|_{\pg_3\times\pg_3}, \qquad x_1,x_2,x_3>0,
\end{equation} 
will be denoted by $g=(x_1,x_2,x_3)_{g_b}$.  The following result is proved in 
\cite[Section 6.1]{H3} (see also \cite[Theorem 7.4, Remark 7.5]{H3}).  

\begin{proposition}\label{harms2}
If Assumption \ref{assum} holds for an aligned $M=G/K$ with $s=2$, then every closed $3$-form $H_Q$ is harmonic with respect to any metric $g=(x_1,x_2,x_3)_{g_b}$.  
\end{proposition}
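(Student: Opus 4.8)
The plan is to convert the harmonicity of $H_Q$ into a finite-dimensional, purely algebraic orthogonality statement and then to use Assumption \ref{assum} together with the alignment identities to make every term vanish. Since $M=G/K$ is compact and $H_Q$ is closed, $H_Q$ is $g$-harmonic if and only if $d_g^{*}H_Q=0$. The $g$-adjoint $d_g^{*}=\pm\star_g\,d\,\star_g$ commutes with the $G$-action (because $g$, and hence $\star_g$, is $G$-invariant), so both $d$ and $d_g^{*}$ preserve the finite-dimensional spaces $V_k:=(\Lambda^k\pg^{*})^{K}$ of $G$-invariant forms. On $(V_\bullet,\langle\cdot,\cdot\rangle_g)$ the elementary identity $(\operatorname{im}d)^{\perp}=\ker d_g^{*}$ holds, so $d_g^{*}H_Q=0$ is equivalent to $\langle H_Q,d\omega\rangle_g=0$ for every $G$-invariant $2$-form $\omega$. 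Thus it suffices to show that the closed form $H_Q$ is $g$-orthogonal to the space $dV_2$ of exact invariant $3$-forms.

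First I would exploit that $g=(x_1,x_2,x_3)_{g_b}$ is block-diagonal with respect to $\pg=\pg_1\oplus\pg_2\oplus\pg_3$: the induced inner product on $\Lambda^3\pg^{*}$ then splits orthogonally along the type decomposition indexed by how many of the three legs lie in each $\pg_i$, and a form of a fixed type is rescaled by a fixed monomial in $x_1^{-1},x_2^{-1},x_3^{-1}$. Hence $\langle H_Q,d\omega\rangle_g$ is a sum over types whose coefficients are linearly independent functions of $(x_1,x_2,x_3)$; requiring it to vanish on the whole positive cone forces each type-pairing to vanish separately, which is precisely why one and the same form can be harmonic for every metric in the family. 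It therefore remains to check, type by type, that either the relevant component of $H_Q$ vanishes, or that of $d\omega$ vanishes, or the two pair to zero.

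Next I would record the brackets that drive both $d\omega$ and $H_Q$: using that $\ggo_1,\ggo_2$ are orthogonal simple ideals and that $\pi_i(\kg)$ projects into $\pg_3$ along $\kg$, one gets $[\pg_1,\pg_2]=0$, $[\pg_i,\pg_i]_\pg\subseteq\pg_i\oplus\pg_3$, $[\pg_3,\pg_i]_\pg\subseteq\pg_i$ for $i=1,2$, and $[\pg_3,\pg_3]_\pg\subseteq\pg_3$. Feeding these into the invariant Chevalley--Eilenberg formula $d\omega(X,Y,Z)=-\omega([X,Y]_\pg,Z)+\omega([X,Z]_\pg,Y)-\omega([Y,Z]_\pg,X)$, together with the fact (stated just before Assumption \ref{assum}) that every invariant $\omega\in V_2$ satisfies $\omega(\pg_i,\pg_3)=0$, pins down exactly which type-components of $d\omega$ can survive. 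In parallel I would determine the surviving type-components of $H_Q$ from \eqref{HQ-def}, using $Q|_{\kg\times\kg}=0$ and $Q(\kg,\pg_i)=0$ (the latter because $\ggo_i=\pi_i(\kg)\oplus\pg_i$ is $\kil_{\ggo_i}$-orthogonal), which annihilate most of the projection terms $Q([\cdot,\cdot]_\kg,\cdot)$.

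The main obstacle is the final matching, carried out in \cite[Section 6.1]{H3}. The pure types $(\pg_i,\pg_i,\pg_i)$ reduce to a question on the factor $M_i=G_i/\pi_i(K)$ alone with its normal (naturally reductive) metric $g_b|_{\pg_i}$, and cause no difficulty. The genuine work is in the mixed types involving $\pg_3$: there both $H_Q$ and certain $d\omega$ can be nonzero, and one must verify the pairing vanishes identically. This is not automatic; it rests on the alignment relations $\kil_{\ggo_i}(Z_i,W_i)=\tfrac{1}{c_i}\kil_\ggo(Z,W)$ and on the defining constraint $\tfrac{y_1}{c_1}+\tfrac{y_2}{c_2}=0$ of $Q$, which conspire so that the $\pg_3$-contributions produced by $[\pg_i,\pg_i]_\pg$ cancel against those coming from the $\kg$-projection terms of $H_Q$. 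Assumption \ref{assum} is essential: without it there would be additional invariant $2$-forms with $\omega(\pg_i,\pg_3)\ne 0$, producing exact $3$-forms in types where $H_Q$ does not vanish and thereby obstructing coclosedness for generic $(x_1,x_2,x_3)$. Once these cancellations are established, $\langle H_Q,d\omega\rangle_g=0$ for all $\omega$, and $H_Q$ is harmonic for every metric in the family.
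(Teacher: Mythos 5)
Your proposal is correct and is essentially the paper's own treatment: the paper does not prove Proposition \ref{harms2} in-text but imports it wholesale from \cite[Section 6.1]{H3}, and your argument correctly reduces harmonicity to the vanishing of the type-by-type pairings $\langle H_Q,d\omega\rangle$ on the finite-dimensional complex of invariant forms (using the bracket relations, $Q|_{\kg\times\kg}=0$, $Q(\kg,\pg_i)=0$, and the consequence $\omega(\pg_i,\pg_3)=0$ of Assumption \ref{assum}) before deferring the decisive cancellations in the mixed $\pg_3$-types to that same reference. Since the one step you leave to \cite{H3} is exactly the step the paper also leaves to \cite{H3}, your outline matches the paper's approach; a fully self-contained argument would still require carrying out those cancellations explicitly.
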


In what follows, we summarize a number of technical properties which will be used throughout the paper.  We first note that 
\begin{equation}\label{pi1pi2}
\pi_1(\kg)\oplus\pi_2(\kg) = \pg_3\oplus\kg,  
\end{equation}
is a Lie subalgebra of $\ggo$, which is abelian if and only if $\kg$ is abelian.  The only nonzero brackets between the $\pg_i$'s are the following: 
\begin{align}
&[\pg_1,\pg_1]\subset\pg_1+\pg_3+\kg, \label{pipj0}\\  
&[\pg_2,\pg_2]\subset\pg_2+\pg_3+\kg, \label{pipj1}\\  
&[\pg_3,\pg_1]\subset\pg_1, \qquad [\pg_3,\pg_2]\subset\pg_2, \label{pipj2}, \\ 
&[\pg_3,\pg_3] \subset\pg_3+\kg. \label{pipj3}
\end{align}

\begin{remark}\label{HQnz}
Since the bi-invariant form $Q$ introduced in \eqref{Qdef} satisfies that
\begin{align}
&Q|_{\pg_1\times\pg_1} =-\tfrac{y_1}{z_1}g_b, \qquad 
Q|_{\pg_2\times\pg_2} =-\tfrac{y_2}{z_2}g_b,  \label{pipj2} \\ 
&Q(\pg_1,\pg_2)=Q(\pg_1,\pg_3)=Q(\pg_2,\pg_3)=Q(\pg_1,\kg)=Q(\pg_2,\kg)=0, \notag
\end{align}
the $3$-form $H_Q$ is nonzero (up to permutations) only on 
$$
\pg_1\times\pg_1\times\pg_1, \qquad \pg_2\times\pg_2\times\pg_2, \qquad\pg_1\times\pg_1\times\pg_3,  \qquad\pg_2\times\pg_2\times\pg_3, \qquad \pg_3\times\pg_3\times\pg_3, 
$$ 
and 
\begin{equation}\label{HQ1}
H_Q(X,Y,Z) = Q([X,Y],Z) = -\tfrac{y_i}{z_i}g_b([X,Y],Z) = y_i\kil_{\ggo_i}([X,Y],Z),     
\end{equation}
for all $X,Y,Z\in\pg_i$, $i=1,2$.  
\end{remark}

The subspace $\pg_3$ in turn admits an $\Ad(K)$-invariant decomposition 
\begin{equation}\label{pjdec}
\pg_3=\pg_3^0\oplus\pg_3^1\oplus\dots\oplus\pg_3^t, \qquad \pg_3^l := \left\{ \left(Z_1,-\tfrac{c_2z_1}{c_1z_2}Z_2\right):Z\in\kg_l\right\},
\end{equation}
which is also orthogonal with respect to both $g_b$ and $Q$ and for any $l=0,\dots,t$, the subspace $\pg_3^l$ is equivalent to the adjoint representation $\kg_l$ as an $\Ad(K)$-representation (see \cite[Proposition 5.1]{H3}); in particular, $\pg_3^l$ is $\Ad(K)$-irreducible for any $1\leq l$.  Thus $\pg_3^l$ and $\pg_3^m$, $l\ne m$, are necessarily orthogonal with respect to any $\Ad(K)$-invariant symmetric bilinear form, including $g_b$, $g$, $\ricci(g)$, $Q$ and $(H_Q)_g^2$.  

The following notation will be strongly used from now on without any further reference: 
$$
A_3:=-\tfrac{c_2z_1}{c_1z_2}, \qquad B_3:=\tfrac{z_1}{c_1}+A_3^2\tfrac{z_2}{c_2},  \qquad
C_3:=\tfrac{y_1}{c_1}+A_3\tfrac{y_2}{c_2}, \qquad
B_4:=\tfrac{z_1}{c_1}+\tfrac{z_2}{c_2}.  
$$ 
We set 
$$
\ip:=-\kil_{\ggo}|_{\kg\times\kg},
$$ 
and fix a $\ip$-orthonormal basis $\{ Z^\alpha\}_{\alpha=1}^{\dim{\kg}}$ of $\kg$ adapted to the $\ip$-orthogonal decomposition $\kg=\kg_0\oplus\dots\oplus\kg_t$.  It follows from \cite[(31),(32)]{H3} that 
$$
g_b|_{\kg\times\kg}=B_4\ip,
$$ 
and 
\begin{equation}\label{gbpi}
g_b(Z_i,W_i) = \tfrac{z_i}{c_i}\la Z,W\ra, \qquad\forall Z,W\in\kg.  
\end{equation}
We also consider the $g_b$-orthonormal basis $\{e_\alpha^3\}_{\alpha=1}^{\dim{\kg}}$ of $\pg_3$ defined by 
\begin{equation}\label{basis}
e_\alpha^3:=\tfrac{1}{\sqrt{B_3}}(Z^\alpha_1,A_3Z^\alpha_2), \qquad  Z^\alpha\in\kg,
\end{equation}
which is compatible with decomposition \eqref{pjdec}.  Note that 
$$
\left\{ e_\alpha^4:=\tfrac{1}{\sqrt{B_4}}Z^\alpha\right\}_{\alpha=1}^{\dim{\kg}},
$$ 
is a $g_b$-orthonormal basis of $\kg$.  The union of all these bases together with $g_b$-orthonormal bases $\{e_\alpha^i\}_{\alpha=1}^{\dim{\pg_i}}$ of $\pg_i$, $i=1,2$, form the $g_b$-orthonormal basis of $\ggo$ which will be used in the computations.  

The following property will be useful (see \cite[Lemma 7.1,(ii)]{H3}): 
\begin{equation}\label{L71}
Q(e_\alpha^4,e_\beta^3) = \delta_{\alpha\beta} \tfrac{-C_3}{\sqrt{B_3B_4}},  
\end{equation}
where $\delta_{\alpha\beta}$ is the usual Kronecker delta.

\section{Ricci curvature}\label{rical-sec}

Let $M$ be a compact connected homogeneous manifold and fix an almost-effective transitive action of a compact connected Lie group $G$ on $M$.  The $G$-action determines a presentation $M=G/K$ of $M$ as a homogeneous space, where $K\subset G$ is the isotropy subgroup at some point $o\in M$. Given any reductive decomposition $\ggo=\kg\oplus\pg$, giving rise to the usual identification $T_oM\equiv\pg$, where $\ggo$ and $\kg$ are the Lie algebras of $G$ and $K$, respectively, we identify any $G$-invariant metric $g$ on $M$ with the corresponding $\Ad(K)$-invariant inner product on $\pg$, which will also be denoted by $g$.  

The Ricci operator $\Ricci(g):\pg\rightarrow\pg$ of a $G$-invariant metric $g$ is given by
\begin{equation}\label{Ric2}
\Ricci(g) = \Mm(g) - \unm\kil(g),
\end{equation}
where $g(\kil(g)\cdot,\cdot):=\kil_{\ggo}|_{\pg\times\pg}$ and
\begin{equation}\label{mm4}
g(\Mm(g) X,X) = -\unm\sum_{i,j} g([X,X_i]_\pg,X_j)^2+ \unc\sum_{i,j} g([X_i,X_j]_\pg,X)^2, \qquad\forall X\in\pg,
\end{equation} 
where $\{ X_i\}$ is any $g$-orthonormal basis of $\pg$ (see \cite[7.38]{Bss} or \cite[Section 2.2]{stab-dos}).  Here $\lb_\pg$ denotes the projection of the Lie bracket of $\ggo$ on $\pg$ relative to $\ggo=\kg\oplus\pg$.  

\begin{definition}\label{cas-def}
Given a real representation $\tau:\hg\rightarrow\End(V)$ of a compact Lie algebra $\hg$ and a bi-invariant inner product $\ip$ on $\hg$, the map 
$$
\cas_{\tau,\ip}:=-\sum_i \tau(X_i)^2:V\longrightarrow V, 
$$
is called the {\it Casimir operator} of $\tau$ with respecto to $\ip$, where $\{ X_i\}_{i=1}^{\dim{\hg}}$ is any $\ip$-orthonormal basis of $\hg$.   
\end{definition}

It is well known that $\cas_{\tau,\ip}$ is symmetric with respect to any $\tau(\hg)$-invariant inner product on $V$ and non-negative definite, $\cas_{\tau,\ip}$ commutes with every $\tau(X)$ and that $\cas_{\ad,-\kil_\hg}=I_\hg$, where $\ad:\hg\rightarrow\End(\hg)$ denotes the adjoint representation of $\hg$.  

In this section, we compute the Ricci curvature of a $G$-invariant metric $g=(x_1,x_2,x_3)_{g_b}$ as in \eqref{gg-harm} on an aligned homogeneous space $M=G/K$ with $s=2$ and positive constants $c_1,c_2$ and $\lambda_1,\dots,\lambda_t$ as in \S\ref{preli3}.  We set $\lambda_0:=0$ in order to simplify some formulas.  We first consider, for each $i=1,2$, the homogeneous space $M_i=G_i/\pi_i(K)$ with $\kil_{\ggo_i}$-orthogonal reductive decomposition $\ggo_i=\pi_i(\kg)\oplus\pg_i$ endowed with the standard metric, which will be denoted by $\gk^i$ (i.e., $\gk^i=-\kil_{\ggo_i}|_{\pg_i\times\pg_i}$).  According to \cite{WngZll2} (see also \cite{stab}), 
\begin{equation}\label{MgBi}
\Mm(\gk^i) = \unm\cas_{\chi_i} - \unc I_{\pg_i}, \qquad 
\Ricci(\gk^i) = \unm\cas_{\chi_i} + \unc I_{\pg_i}, \qquad\forall i=1,2,
\end{equation}
where 
$$
\cas_{\chi_i}:\pg_i\longrightarrow\pg_i,
$$ 
is the Casimir operator of the isotropy representation $\chi_i:\pi_i(\kg)\rightarrow\End(\pg_i)$ of $M_i=G_i/\pi_i(K)$ with respect to the bi-invariant inner product $-\kil_{\ggo_i}|_{\pi_i(\kg)\times\pi_i(\kg)}$.  Since $\{\sqrt{c_i}Z^\alpha_i\}_{\alpha=1}^{\dim{\kg}}$ is a $-\kil_{\ggo_i}$-orthonormal basis of $\pi_i(\kg)$ (see \eqref{al2}) and $\ad_{\ggo_i}{Z^\alpha_i}|_{\pg_i} = \ad_\ggo{Z^\alpha}|_{\pg_i}$, we have that 
\begin{equation}\label{Cchi}
\cas_{\chi_i} = c_i\cas_\chi|_{\pg_i}, \qquad\forall i=1,2, 
\end{equation}
where 
$$
\cas_\chi:\pg\longrightarrow\pg,
$$ 
is the Casimir operator of the isotropy representation $\chi:\kg\rightarrow\End(\pg)$ of $M=G/K$ with respect to $-\kil_{\ggo}|_{\kg\times\kg}$.

\begin{proposition}\label{ricggals2}
If $s=2$, then the Ricci operator of the metric $g=(x_1,x_2,x_3)_{g_b}$ (see \eqref{gbdef}, \eqref{dec1} and \eqref{gg-harm}) is given as follows: 
\begin{enumerate}[{\rm (i)}]
\item $\Ricci(g)|_{\pg_1} =  \tfrac{1}{4x_1z_1}I_{\pg_1} 
+ \tfrac{1}{2x_1}  \left(\tfrac{1}{z_1} - \tfrac{x_{3}}{x_1c_1B_{3}}\right) \cas_{\chi_{1}}$.  
\item[ ]
\item $\Ricci(g)|_{\pg_2} =  \tfrac{1}{4x_2z_2}I_{\pg_2} 
+ \tfrac{1}{2x_2}\left(\tfrac{1}{z_2} - \tfrac{x_{3}}{x_2c_2B_{3}} A_{3}^2\right) \cas_{\chi_{2}} $.
\item[ ]
\item The decomposition $\pg=\pg_1\oplus\pg_2\oplus\pg_3^0\oplus\dots\oplus\pg_3^t$ is $\ricci(g)$-orthogonal.
\item[ ]
\item $\Ricci(g)|_{\pg_3^l}= r_lI_{\pg_3^l}$ for any $l=0,1,\dots,t$, where 
\begin{align*}
r_l :=& \tfrac{\lambda_l}{4x_{3}B_3}\left(\tfrac{2x_1^2-x_{3}^2}{x_1^2} 
+\tfrac{(2x_2^2-x_{3}^2)A_3^2}{x_2^2}  
-\tfrac{1+A_3}{B_3}\left(\tfrac{z_1}{c_1}+\tfrac{z_2}{c_2}A_3^3\right)\right) \\ 
&+\tfrac{1}{4x_{3}B_3}\left(2\left(\tfrac{1}{c_1}+\tfrac{1}{c_2}A_3^2\right) 
-\tfrac{2x_1^2-x_{3}^2}{x_1^2c_1} 
- \tfrac{(2x_2^2-x_{3}^2)A_3^2}{x_2^2c_2}\right).
\end{align*} 
\end{enumerate}
\end{proposition}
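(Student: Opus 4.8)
The plan is to read off each block of $\Ricci(g)$ from the general identity \eqref{Ric2}, $\Ricci(g)=\Mm(g)-\unm\kil(g)$, working in the $g$-orthonormal basis obtained by rescaling the $g_b$-orthonormal basis $\{e_\alpha^1\}\cup\{e_\alpha^2\}\cup\{e_\alpha^3\}$ of \S\ref{preli3} by $x_1^{-1/2},x_2^{-1/2},x_3^{-1/2}$ respectively. The Killing piece is immediate: on $\pg_i$ one has $\kil_\ggo|_{\pg_i}=\kil_{\ggo_i}|_{\pg_i}=-\tfrac{1}{z_i}g_b|_{\pg_i}$, so $-\unm\kil(g)|_{\pg_i}=\tfrac{1}{2x_iz_i}I_{\pg_i}$, while on $\pg_3$, expanding $e_\alpha^3$ via \eqref{basis} and using \eqref{al2} gives $\kil_\ggo(e_\alpha^3,e_\beta^3)=-\tfrac{1}{B_3}\big(\tfrac{1}{c_1}+\tfrac{A_3^2}{c_2}\big)\delta_{\alpha\beta}$, hence the summand $\tfrac{1}{4x_3B_3}\cdot 2\big(\tfrac{1}{c_1}+\tfrac{A_3^2}{c_2}\big)$ appearing in $r_l$. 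All remaining content is the computation of $\Mm(g)$ from \eqref{mm4}, organized by the bracket relations \eqref{pipj0}--\eqref{pipj3}.

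For the orthogonality in (iii): since $\ricci(g)$ is $\Ad(K)$-invariant and each $\pg_3^l\simeq\kg_l$ is $\Ad(K)$-irreducible ($l\geq 1$), the $\pg_3^l$ are already mutually $\ricci(g)$-orthogonal (as noted after \eqref{pjdec}), and Assumption \ref{assum} forces $\ricci(g)(\pg_i,\pg_3)=0$. The only cross-term requiring a computation is $\ricci(g)(\pg_1,\pg_2)$: polarizing \eqref{mm4}, in the first sum no single index $X_i$ can make both $[X,X_i]_\pg$ (for $X\in\pg_1$) and $[Y,X_i]_\pg$ (for $Y\in\pg_2$) share a nonzero component, and in the second sum no pair $(X_i,X_j)$ can feed both a $\pg_1$- and a $\pg_2$-output, by \eqref{pipj0}--\eqref{pipj3}; together with $\kil_\ggo(\pg_1,\pg_2)=0$ this gives $\ricci(g)(\pg_1,\pg_2)=0$.

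For (i) — and (ii) is identical, with subscript $2$ and an extra $A_3^2$ coming from $[X,e_\alpha^3]=\tfrac{A_3}{\sqrt{B_3}}[Z_2^\alpha,X]$ for $X\in\pg_2$ — I split the two sums of \eqref{mm4} for $X\in\pg_1$ according to whether the bracket indices lie in $\pg_1$ or in $\pg_3$, the $\pg_2$-indices dropping out since $[\pg_1,\pg_2]=0$. The contributions involving only $\pg_1$ reproduce $\Mm(\gk^1)$ up to the scaling $g|_{\pg_1}=x_1z_1\gk^1$, hence equal $\tfrac{1}{x_1z_1}\Mm(\gk^1)=\tfrac{1}{x_1z_1}\big(\unm\cas_{\chi_1}-\unc I_{\pg_1}\big)$ by \eqref{MgBi}; combined with $-\unm\kil(g)|_{\pg_1}=\tfrac{1}{2x_1z_1}I_{\pg_1}$ this already yields the identity coefficient $\tfrac{1}{4x_1z_1}$ and a Casimir term $\tfrac{1}{2x_1z_1}\cas_{\chi_1}$. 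The terms carrying a $\pg_3$-index are all of Casimir type, since brackets with $\pg_3\simeq\kg$ are isotropy actions (explicitly $[\pg_3,\pg_1]\subset\pg_1$ equals $-\tfrac{1}{\sqrt{B_3}}\ad(Z_1^\alpha)$ on $\pg_1$, while the $\pg_3$-component of $[\pg_1,\pg_1]$ is read off by expanding its $\pi_1(\kg)$-part along $\pg_3\oplus\kg$); collapsing their squared structure constants through $\sum_\alpha(\ad Z_1^\alpha|_{\pg_1})^2=-\cas_\chi|_{\pg_1}=-\tfrac{1}{c_1}\cas_{\chi_1}$ from \eqref{Cchi} contributes only the further Casimir term $-\tfrac{x_3}{2x_1^2c_1B_3}\cas_{\chi_1}$ and nothing to the identity, producing the coefficient $\tfrac{1}{2x_1}\big(\tfrac{1}{z_1}-\tfrac{x_3}{x_1c_1B_3}\big)$ of (i).

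Part (iv) is the laborious core and where I expect the main obstacle. For $X=e_\gamma^3\in\pg_3^l$ one must evaluate all three surviving bracket families in both sums of \eqref{mm4}: $[\pg_3,\pg_1]\subset\pg_1$, $[\pg_3,\pg_2]\subset\pg_2$, and $[\pg_3,\pg_3]\subset\pg_3+\kg$. The difficulty concentrates in the last family, whose image in $\pi_1(\kg)\oplus\pi_2(\kg)$ must be re-expanded in the $\pg_3\oplus\kg$ basis — the coefficients being governed by $A_3$, which is what produces the $A_3^3$ term and the factor $\tfrac{1+A_3}{B_3}$ — and in the competition between the $-\unm$ sum on $[\pg_3,\pg_i]$ and the $\unc$ sum on the $\pg_3$-projection of $[\pg_i,\pg_i]$, which assembles the factors $\tfrac{2x_i^2-x_3^2}{x_i^2}$. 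The constant $\lambda_l$ enters through the adjoint Casimir of $\kg_l$: since $\kil_{\kg_l}=\lambda_l\kil_\ggo|_{\kg_l}$ by \eqref{al1}, this Casimir with respect to $\ip$ equals $\lambda_l I_{\kg_l}$, so the terms seeing the $\kg_l$-adjoint structure carry a factor $\lambda_l$ (the $\lambda_l$-group of $r_l$) while the purely $\pi_i(\kg)$-on-$\pg_i$ terms carry $\tfrac{1}{c_i}$ instead (the remaining group, together with the Killing contribution $\tfrac{1}{4x_3B_3}\cdot 2(\tfrac{1}{c_1}+\tfrac{A_3^2}{c_2})$); scalarity of the block on each $\pg_3^l$, including the trivial piece $\pg_3^0$, follows from these Casimir identities. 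Matching every scaling $x_1,x_2,x_3,B_3,\lambda_l,c_i$ so that the two sums and the Killing term collapse into the displayed $r_l$ is the delicate bookkeeping at the heart of the proof; by contrast the $\pg_1,\pg_2$ blocks are controlled cleanly once the reduction to $\Mm(\gk^i)$ and \eqref{Cchi} are in hand.
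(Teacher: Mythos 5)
Your parts (i) and (ii) are correct and essentially reproduce the paper's argument: the pure $\pg_k$-sums in \eqref{mm4} reduce to $\Mm(\gk^k)$ through the scaling $g|_{\pg_k}=x_kz_k\gk^k$, the mixed $\pg_k$--$\pg_3$ sums collapse via \eqref{Cchi}, and your net Casimir coefficient $-\tfrac{x_3}{2x_1^2c_1B_3}\cas_{\chi_1}$ is exactly right. In part (iii), however, you prove $\ricci(g)(\pg_i,\pg_3)=0$ by Schur's lemma together with Assumption \ref{assum}, and that assumption is \emph{not} a hypothesis of this proposition: Section \ref{rical-sec} assumes only that $M=G/K$ is aligned with $s=2$, and the paper's proof of (iii) is a direct computation (the cross terms of \eqref{mm4} reduce to $-\tr{\ad_{\pg_k}{X}\ad{Y}|_{\pg_k}}=-\kil_{\ggo_k}(X,Y_k)=0$) that needs no representation-theoretic hypothesis. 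So your argument for this block proves a strictly weaker statement than the one claimed, which matters because the proposition is meant as a general-purpose Ricci formula independent of Assumption \ref{assum}.

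The genuine gap is part (iv), which in your write-up is a (correct) roadmap rather than a proof. You rightly identify that the factors $\tfrac{2x_i^2-x_3^2}{x_i^2}$ come from combining the $-\unm$-sum over $[\pg_3,\pg_i]\subset\pg_i$ with the $\unc\tfrac{x_3^2}{x_i^2}$-sum over the $\pg_3$-projection of $[\pg_i,\pg_i]$, that $\lambda_l$ enters through \eqref{al1}, and that the Killing term contributes $\tfrac{1}{4x_3B_3}\cdot 2\bigl(\tfrac{1}{c_1}+\tfrac{A_3^2}{c_2}\bigr)$; but you then explicitly defer ``matching every scaling,'' and that matching \emph{is} the content of (iv). Concretely, what is missing is the evaluation of the three traces for $X=(Z_1,A_3Z_2)$, $Z\in\kg_l$: one needs $\tr{(\ad{X}|_{\pg_i})^2}=-\tfrac{1-\lambda_lc_i}{c_i}A_3^{2(i-1)}\la Z,Z\ra$ (via \eqref{al2} and \eqref{cij}), and above all $\tr{(\ad_\pg{X}|_{\pg_3})^2}$, which requires the explicit projection of $\pi_1(\kg)\oplus\pi_2(\kg)$ onto $\pg_3$ along $\kg$: writing the $\pg_3$-part of $(Y_1,W_2)$ as $(V_1,A_3V_2)$ with $V=\tfrac{1}{1-A_3}(Y-W)$, one gets $[X,e^3_\alpha]_{\pg_3}=\tfrac{1+A_3}{\sqrt{B_3}}\bigl([Z_1,Z^\alpha_1],A_3[Z_2,Z^\alpha_2]\bigr)$ and hence $\tr{(\ad_\pg{X}|_{\pg_3})^2}=-\tfrac{1+A_3}{B_3}\bigl(\tfrac{z_1}{c_1}+\tfrac{z_2}{c_2}A_3^3\bigr)\lambda_l\la Z,Z\ra$. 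Without these computations --- which are also what establishes that each block is scalar, including on the reducible piece $\pg_3^0$ --- the displayed formula for $r_l$, on which the entire BRF analysis of Section \ref{BRF-sec} rests, is described but not derived.
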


\begin{proof} 
We first prove parts (i) and (ii).  If $X\in\pg_k$, $k=1,2$, then by \eqref{Ric2} and \eqref{mm4}, 
\begin{align*}
g(\Ricci(g)X,X) =& g(\Mm(g)X,X) - \unm\kil_\ggo(X,X) \\ 
=& -\unm\sum_{\substack{\alpha,\beta}} \tfrac{1}{x_k^2}g([X,e^k_\alpha]_\pg,e^k_\beta)^2 
+\unc\sum_{\substack{\alpha,\beta}} \tfrac{1}{x_k^2}g([e^k_\alpha,e^k_\beta]_\pg,X)^2 \\ 
& -\unm\sum_{\substack{\alpha,\beta}} \tfrac{1}{x_kx_3}g([X,e^k_\alpha]_\pg,e^3_\beta)^2 
+\unm\sum_{\substack{\alpha,\beta}} \tfrac{1}{x_kx_3}g([e^k_\alpha,e^3_\beta]_\pg,X)^2 \\ 
& -\unm\sum_{\substack{\alpha,\beta}} \tfrac{1}{x_kx_3}g([X,e^3_\beta]_\pg,e^k_\alpha)^2 
- \unm\kil_\ggo(X,X) \\ 
=& -\unm\sum_{\substack{\alpha,\beta}} g_b([X,e^k_\alpha],e^k_\beta)^2 
+\unc\sum_{\substack{\alpha,\beta}}g_b([e^k_\alpha,e^k_\beta],X)^2 
- \unm\kil_{\ggo_k}(X,X) \\ 
& -\unm\sum_{\substack{\alpha,\beta}} \tfrac{x_3}{x_k}g_b([X,e^k_\alpha],e^3_\beta)^2. 
\end{align*}
This implies that
\begin{align*}
x_kg_b(\Ricci(g)X,X) = g_b(\Ricci(g_b^k)X,X) 
-\tfrac{x_3}{2x_k} \sum_{\substack{\alpha,\beta}} g_b([X,e^k_\alpha],e^3_\beta)^2, 
\end{align*}
where $g_b^k$ is the normal metric on $M_k$ defined by $g_b|_{\ggo_k}=z_k(-\kil_{\ggo_k})$, and since $e^3_\beta=\tfrac{1}{\sqrt{B_3}}(Z^\beta_1,A_3Z^\beta_2)$ and by \eqref{Cchi},
\begin{align}
\sum_{\substack{\alpha,\beta}} g_b([X,e^k_\alpha],Z^\beta)^2 
=& -g_b\Big(\sum_{\substack{\beta}} (\ad{Z^\beta}|_{\pg_k})^2X,X\Big) = g_b(\cas_\chi|_{\pg_k}X,X) \label{chik} \\ 
=& g_b(\tfrac{1}{c_k}\cas_{\chi_k}X,X),\notag 
\end{align}
we obtain that
\begin{align*}
x_1g_b(\Ricci(g)X,X) =
& g_b(\Ricci(g_b^1)X,X) 
- \tfrac{x_3}{2x_1B_3} g_b(\tfrac{1}{c_1}\cas_{\chi_1}X,X), \\ 
x_2g_b(\Ricci(g)X,X) =
& g_b(\Ricci(g_b^2)X,X) 
-\tfrac{x_3A_3^2}{2x_2B_3} g_b(\tfrac{1}{c_2}\cas_{\chi_2}X,X). 
\end{align*}
The fact that $\Ricci(g_b^k) = \tfrac{1}{z_k}\Ricci(\gk^k) = \tfrac{1}{2z_k}\cas_{\chi_k}+\tfrac{1}{4z_k}I_{\pg_k}$ (see \eqref{MgBi} and recall that $g_b^k=z_k\gk^k$) completes the proof of parts (i) and (ii).  

We now consider $X\in\pg_3^l$ in order to prove part (iv).  We have that,
\begin{align*}
g(\Ricci(g)X,X) =& g(\Mm(g)X,X) - \unm\kil_\ggo(X,X) \\ 
=& -\unm\sum_{\substack{\alpha,\beta\\ k=1,2,3}} \tfrac{1}{x_k^2}g([X,e^k_\alpha]_\pg,e^k_\beta)^2 
+\unc\sum_{\substack{\alpha,\beta\\ k=1,2,3}} \tfrac{1}{x_k^2}g([e^k_\alpha,e^k_\beta]_\pg,X)^2 -\unm\kil_\ggo(X,X)\\ 
=& -\unm\sum_{\substack{\alpha,\beta\\ k=1,2,3}} g_b([X,e^k_\alpha],e^k_\beta)^2 
+\unc\sum_{\substack{\alpha,\beta\\ k=1,2,3}} \tfrac{x_3^2}{x_k^2}g_b([e^k_\alpha,e^k_\beta],X)^2 -\unm\kil_\ggo(X,X),
\end{align*}
and since 
$$
\sum_{\substack{\alpha,\beta}} g_b([X,e^k_\alpha],e^k_\beta)^2 = -\tr{(\ad_\pg{X}|_{\pg_k})^2}, \qquad\forall k=1,2,3,
$$
we obtain that
\begin{align*}
g(\Ricci(g)X,X) 
=& \tfrac{2x_1^2-x_3^2}{4x_1^2} \tr{(\ad{X}|_{\pg_1})^2} 
+ \tfrac{2x_2^2-x_3^2}{4x_2^2} \tr{(\ad{X}|_{\pg_2})^2} \\
&+  \unc\tr{(\ad_\pg{X}|_{\pg_3})^2} -\unm\kil_\ggo(X,X).
\end{align*}
If $X=(Z_1,A_3Z_2)$, $Z\in\kg_l$, then by \eqref{al2}, 
\begin{align*}
\kil_\ggo(X,X) =& \kil_{\ggo_1}(Z_1,Z_1)+A_3^2 \kil_{\ggo_2}(Z_2,Z_2) 
= \tfrac{1}{c_1}\kil_{\ggo}(Z,Z)+\tfrac{1}{c_2}A_3^2 \kil_{\ggo}(Z,Z) \\
=& -\left(\tfrac{1}{c_1}+\tfrac{1}{c_2}A_3^2\right) \la Z,Z\ra, 
\end{align*}
by \eqref{cij},
\begin{align*}
\tr{(\ad{X}|_{\pg_1})^2} =& \tr{(\ad{Z_1}|_{\pg_1})^2}=\kil_{\ggo_1}(Z_1,Z_1)-\kil_{\pi_1(\kg)}(Z_1,Z_1)\\ 
=& (1-\lambda_lc_1)\kil_{\ggo_1}(Z_1,Z_1) = -\tfrac{1-\lambda_lc_1}{c_1}\la Z,Z\ra, 
\end{align*}
\begin{align*}
\tr{(\ad{X}|_{\pg_2})^2} =& A_3^2\tr{(\ad{Z_2}|_{\pg_2})^2} = -\tfrac{1-\lambda_lc_2}{c_2}A_3^2\la Z,Z\ra, 
\end{align*}
and by \eqref{gbpi},
$$
g_b(X,X) = g_b(Z_1,Z_1)+A_3^2g_b(Z_2,Z_2) = \left(\tfrac{z_1}{c_1}+\tfrac{z_2}{c_2}A_3^2\right)\la Z,Z\ra = B_3\la Z,Z\ra.  
$$
On the other hand, using that the $g_b$-orthogonal projections on $\kg$ and $\pg_3$ of a vector $(Y_1,W_2)\in\pi_1(\kg)\oplus\pi_2(\kg)$, $Y,W\in\kg$ (see \eqref{pi1pi2}) are respectively given by $U$ and $(V_1,A_3V_2)$, where
$$
U=-\tfrac{A_3}{1-A_3}Y+\tfrac{1}{1-A_3}W, \qquad V=\tfrac{1}{1-A_3}Y-\tfrac{1}{1-A_3}W,
$$
we obtain that 
$$
[X,e^3_\alpha]_{\pg} = [X,e^3_\alpha]_{\pg_3} 
= \tfrac{1}{\sqrt{B_3}}\left([Z,Z^\alpha]_1,A_3^2[Z,Z^\alpha]_2\right)_{\pg_3} 
= \tfrac{1+A_3}{\sqrt{B_3}}\left([Z_1,Z^\alpha_1],A_3[Z_2,Z^\alpha_2]\right),
$$
by setting $Y:=[Z,Z^\alpha]$ and $W:=A_3^2[Z,Z^\alpha]$.  Thus by \eqref{al1},
\begin{align*}
\tr{(\ad_\pg{X}|_{\pg_3})^2} =& 
\sum_{\substack{\alpha}} g_b((\ad_\pg{X}|_{\pg_3})^2e^3_\alpha,e^3_\alpha) \\ 
=&\sum_{\substack{\alpha}} g_b\left(\ad_\pg{X}|_{\pg_3}\tfrac{1+A_3}{\sqrt{B_3}}\left([Z_1,Z^\alpha_1],A_3[Z_2,Z^\alpha_2]\right), \tfrac{1}{\sqrt{B_3}}(Z^\alpha_1,A_3Z^\alpha_2)\right) \\
=& \tfrac{1+A_3}{B_3}\sum_{\substack{\alpha}} g_b\left((\ad{Z_1})^2Z^\alpha_1,A_3^2(\ad{Z_2})^2Z^\alpha_2), (Z^\alpha_1,A_3Z^\alpha_2)\right) \\
=& \tfrac{1+A_3}{B_3}\sum_{\substack{\alpha}} g_b\left((\ad{Z_1})^2Z^\alpha_1,Z^\alpha_1\right) 
+g_b\left(A_3^2(\ad{Z_2})^2Z^\alpha_2, A_3Z^\alpha_2\right) \\ 
=& \tfrac{1+A_3}{B_3}\sum_{\substack{\alpha}} \tfrac{z_1}{c_1}\la(\ad{Z})^2Z^\alpha,Z^\alpha\ra 
+\tfrac{z_2}{c_2}A_3^3\la(\ad{Z})^2Z^\alpha, Z^\alpha\ra \\
=& \tfrac{1+A_3}{B_3}\left(\tfrac{z_1}{c_1}+\tfrac{z_2}{c_2}A_3^3\right)\kil_{\kg_l}(Z,Z) 
= -\tfrac{1+A_3}{B_3}\left(\tfrac{z_1}{c_1}+\tfrac{z_2}{c_2}A_3^3\right)\lambda_l\la Z,Z\ra.
\end{align*}
All this implies that
\begin{align*}
x_3g_b(\Ricci(g)X,X) 
=& -\tfrac{2x_1^2-x_3^2}{4x_1^2} \tfrac{1-\lambda_lc_1}{c_1}\la Z,Z\ra 
- \tfrac{2x_2^2-x_3^2}{4x_2^2}A_3^2 \tfrac{1-\lambda_lc_2}{c_2}\la Z,Z\ra \\
&-  \tfrac{1+A_3}{4B_3}\left(\tfrac{z_1}{c_1}+\tfrac{z_2}{c_2}A_3^3\right)\lambda_l \la Z,Z\ra
+ \tfrac{1}{2B_3}\left(\tfrac{1}{c_1}+\tfrac{1}{c_2}A_3^2\right) g_b(X,X)\\ 
=& -\tfrac{(2x_1^2-x_3^2)(1-\lambda_lc_1)}{4x_1^2c_1B_3} g_b(X,X) 
- \tfrac{(2x_2^2-x_3^2)(1-\lambda_lc_2)A_3^2}{4x_2^2c_2B_3} g_b(X,X) \\
&-  \tfrac{1+A_3}{4B_3^2}\left(\tfrac{z_1}{c_1}+\tfrac{z_2}{c_2}A_3^3\right)\lambda_l g_b(X,X)
+ \tfrac{1}{2B_3}\left(\tfrac{1}{c_1}+\tfrac{1}{c_2}A_3^2\right) g_b(X,X),
\end{align*}
and so the formula for $r_l$ given in (iv) follows.  

Finally, to prove (iii), we take $X\in\pg_k$ and $Y\in\pg_l$, $k< l$, and compute
\begin{align*}
g(\Ricci(g)X,Y) =& g(\Mm(g)X,Y) - \unm\kil_\ggo(X,Y) \\ 
=& -\unm\sum_{\substack{\alpha,\beta\\ i,j}} \tfrac{x_j}{x_i}g_b([X,e^i_\alpha]_\pg,e^j_\beta) g_b([Y,e^i_\alpha]_\pg,e^j_\beta) \\
& +\unc\sum_{\substack{\alpha,\beta\\ i,j}} \tfrac{x_kx_l}{x_ix_j}g_b([e^i_\alpha,e^j_\beta]_\pg,X) g_b([e^i_\alpha,e^j_\beta]_\pg,Y) 
- \unm\kil_\ggo(X,Y). 
\end{align*}
Since $\kil_\ggo(X,Y)=\kil_{\ggo_k}(X,Y_k)=0$ and  
$$
\sum_{\substack{\alpha,\beta}} g([X,e^i_\alpha]_\pg,e^j_\beta)g([Y,e^i_\alpha]_\pg,e^j_\beta) 
= -\tr{\ad_{\pg_k}{X}\ad{Y}|_{\pg_k}} = -\kil_{\ggo_k}(X,Y_k)=0,   
$$
if $k=1,2$ and $l=3$, and zero otherwise, we obtain that $g(\Ricci(g)X,Y)=0$, concluding the proof. 
\end{proof}

\section{The symmetric form $(H_Q)_g^2$}\label{HQ2-sec}

In order to study the BRF condition \eqref{BRF-def} for the generalized metric $(g,H_Q)$ on an aligned space $M=G/K$ with $s=2$, where $g=(x_1,x_2,x_3)_{g_b}$ is as in \eqref{gg-harm} and $H_Q$ is as in \eqref{HQ-def}, we compute in this section the symmetric bilinear form $(H_Q)_g^2$.   

\begin{lemma}\label{HQ2ort}
\hspace{1cm}
\begin{enumerate}[{\rm (i)}]
\item $(H_Q)^2_g(\pg_1,\pg_2)=0$. 

\item $(H_Q)^2_g(\pg_1,\pg_3)=0$ and $(H_Q)^2_g(\pg_2,\pg_3)=0$. 

\item $(H_Q)^2_g(\pg_3^l,\pg_3^m)=0$ for all $0\leq l\ne m\leq t$. 
\end{enumerate}
\end{lemma}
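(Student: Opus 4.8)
The plan is to compute $(H_Q)_g^2$ in a basis adapted to the decomposition and then exploit the $\Ad(K)$-invariance of the resulting form. I would fix a $g$-orthonormal basis $\{f_a\}$ of $\pg$ obtained by rescaling the $g_b$-orthonormal basis $\{e^1_\alpha\}\cup\{e^2_\alpha\}\cup\{e^3_\alpha\}$ of \S\ref{preli3} (so that each $f_a$ still lies in one of $\pg_1,\pg_2,\pg_3^0,\dots,\pg_3^t$), and write
\begin{equation*}
(H_Q)_g^2(X,Y)=\unm\sum_{a,b}H_Q(X,f_a,f_b)\,H_Q(Y,f_a,f_b).
\end{equation*}
Since $H_Q$ is $G$-invariant and $g$ is $\Ad(K)$-invariant, $(H_Q)_g^2$ is an $\Ad(K)$-invariant symmetric bilinear form on $\pg$; this is the structural fact I would lean on throughout.

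For part (i) I would argue purely from the support of $H_Q$ recorded in Remark \ref{HQnz}. Take $X\in\pg_1$ and $Y\in\pg_2$; a term in the sum survives only if both $H_Q(X,f_a,f_b)$ and $H_Q(Y,f_a,f_b)$ are nonzero. As the only $H_Q$-triples meeting $\pg_1$ are of type $(\pg_1,\pg_1,\pg_1)$ and $(\pg_1,\pg_1,\pg_3)$, the first factor forces at least one of $f_a,f_b$ into $\pg_1$; by the same list applied to $\pg_2$, the second factor forces at least one of them into $\pg_2$. Then the pair $\{f_a,f_b\}$ would have one leg in $\pg_1$ and one in $\pg_2$, so each evaluation sits on a triple of type $(\pg_1,\pg_1,\pg_2)$ or $(\pg_2,\pg_2,\pg_1)$, neither of which lies in the support. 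Hence every term vanishes and $(H_Q)_g^2(\pg_1,\pg_2)=0$.

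For parts (ii) and (iii) I would switch to representation theory, and here lies the point to watch: the naive support argument does \emph{not} close part (ii). Indeed, for $X\in\pg_1$ and $Y\in\pg_3$ the terms with $f_a,f_b\in\pg_1$ pass the support test (both $(\pg_1,\pg_1,\pg_1)$ and $(\pg_1,\pg_1,\pg_3)$ are allowed, the latter because $Q(\kg,\pg_3)\ne 0$ by \eqref{L71}), so there is no termwise cancellation to exploit. Instead I would use that $\pg_1$ and $\pg_3$ (resp. $\pg_2$ and $\pg_3$) share no irreducible $\Ad(K)$-subrepresentation: by Assumption \ref{assum} no component of $\pg_1,\pg_2$ is equivalent to a simple factor of $\kg$ nor to the trivial representation, whereas $\pg_3\simeq\kg$ breaks into the adjoint representations of the $\kg_j$ together with the trivial representation from $\zg(\kg)$. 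Any $\Ad(K)$-invariant bilinear form between representations with no common irreducible factor must vanish by Schur's lemma, and applying this to $(H_Q)_g^2$ gives (ii). Part (iii) is the same principle in its cleanest form and was in fact already recorded after \eqref{pjdec}: the summands $\pg_3^l$ are pairwise inequivalent irreducible $\Ad(K)$-representations, hence orthogonal for every $\Ad(K)$-invariant symmetric bilinear form, $(H_Q)_g^2$ included. The main obstacle is therefore not computational but conceptual, namely recognizing that (ii) must be settled by invariance/Schur rather than by the support bookkeeping that disposes of (i).
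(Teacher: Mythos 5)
Your proof is correct, and for part (ii) it takes a genuinely different route from the paper --- one that is in fact more careful than the paper's own argument. The paper derives both (i) and (ii) from a single termwise claim, namely that $H_Q(\pg_i,X,Y)H_Q(\pg_j,X,Y)=0$ for all $i\ne j$ and all $X\in\pg_k$, $Y\in\pg_l$, citing Remark \ref{HQnz}, and reserves the inequivalence/Schur argument for part (iii) only. For $(i,j)=(1,2)$ that claim is exactly your support argument and it holds; but for $(i,j)=(1,3)$ with $(k,l)=(1,1)$ it fails, for precisely the reason you point out: both $(\pg_1,\pg_1,\pg_1)$ and $(\pg_3,\pg_1,\pg_1)$ lie in the support of $H_Q$, so the terms with $f_a,f_b\in\pg_1$ are not individually zero in general (by \eqref{HQ1} and \eqref{L71} they are nonzero, e.g., whenever $[\pg_1,\pg_1]_{\pg_1}\ne0$). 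So the paper's one-line justification of (ii) is incomplete as written: the $\pg_1\times\pg_1$ block of the sum does vanish, but only after summation --- one can verify this by a trace identity using $\kil_{\ggo_1}(\pg_1,\pi_1(\kg))=0$ and $[\pi_1(\kg),\pg_1]\subset\pg_1$ --- or, as you do, by $\Ad(K)$-invariance and Schur. Your route buys a computation-free proof of (ii) and (iii) simultaneously; its cost is that it invokes Assumption \ref{assum}, which the lemma's statement does not formally include (the repaired termwise/trace argument would give (ii) unconditionally). This is harmless here, since the lemma is only ever applied under Assumption \ref{assum}, which is already required for Proposition \ref{harms2}.

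Two small remarks. First, when $\zg(\kg)=0$ the assumption does not exclude trivial $\Ad(K)$-components of $\pg_1,\pg_2$, contrary to your phrasing; but in that case $\pg_3^0=0$, so your conclusion that $\pg_i$ and $\pg_3$ share no irreducible component survives in both cases of the assumption. Second, the factor $\unm$ in your formula for $(H_Q)_g^2$ differs from the paper's normalization (compare the proof of Proposition \ref{HQ2s}, which sums over all ordered pairs with coefficient $1$); this is immaterial for vanishing statements.
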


\begin{proof}
According to Remark \ref{HQnz}, if $i\ne j$ then
$$
H_Q(\pg_i,X,Y)H_Q(\pg_j,X,Y)=0, \qquad\forall X\in\pg_k, \quad Y\in\pg_l, \quad 1\leq k,l\leq 3,
$$
from which parts (i) and (ii) follow.  For part (iii) recall that the irreducible $\Ad(K)$-components of $\pg_3^l$'s are pairwise non-equivalent.  
\end{proof}

We now give a formula for the form $(H_Q)^2_g$ on $\pg_1$ and $\pg_2$. 

\begin{proposition}\label{HQ2s} 
For any $X\in\pg_k$, $k=1,2$,
$$
(H_Q)_g^2(X,X) = g_b\left(\left(\left(\tfrac{2S_k}{x_kc_k}-\tfrac{2y_k^2}{x_k^2z_k^3}\right)\cas_{\chi_k}+\tfrac{y_k^2}{x_k^2z_k^3}I_{\pg_k}\right)X,X\right),
$$
where
$$
S_1= \tfrac{1}{x_3B_3}\left(\tfrac{y_1}{z_1} + \tfrac{C_3}{B_4}\right)^2, \qquad 
S_2 = \tfrac{1}{x_3B_3}\left(A_3\tfrac{y_2}{z_2}+ \tfrac{C_3}{B_4}\right)^2.
$$
\end{proposition}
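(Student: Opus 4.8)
I would fix $X\in\pg_1$ (the case $X\in\pg_2$ is entirely parallel, with the two factors interchanged) and read off $(H_Q)^2_g(X,X)=g_b(\iota_XH_Q,\iota_XH_Q)$ from the squared $g$-norm of the $2$-form $\iota_XH_Q$, expanded in the $g$-orthonormal basis $\{x_i^{-1/2}e^i_\alpha\}$. By Remark \ref{HQnz} the only nonzero triples of $H_Q$ containing a $\pg_1$-slot sit in $\pg_1\times\pg_1\times\pg_1$ and $\pg_1\times\pg_1\times\pg_3$, so $\iota_XH_Q$ is supported on the blocks $\pg_1\times\pg_1$ and $\pg_1\times\pg_3$ (and their transposes). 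Hence
$$
(H_Q)^2_g(X,X)=\tfrac{1}{x_1^2}\sum_{\alpha,\beta}H_Q(X,e^1_\alpha,e^1_\beta)^2+\tfrac{2}{x_1x_3}\sum_{\alpha,\beta}H_Q(X,e^1_\alpha,e^3_\beta)^2,
$$
and I would treat the two sums separately. The single quantity feeding both is the ``atom'' $c(X):=\sum_\alpha\|[X,e^1_\alpha]_{\pi_1(\kg)}\|_{g_b}^2$, where $[\,\cdot\,]_{\pi_1(\kg)}$ is the component relative to $\ggo_1=\pg_1\oplus\pi_1(\kg)$. Pairing with the $g_b$-orthonormal basis $\{\sqrt{c_1/z_1}\,Z^\gamma_1\}$ of $\pi_1(\kg)$ and using $[Z^\gamma_1,X]=[Z^\gamma,X]$ with $\{Z^\gamma\}$ being $\ip$-orthonormal, \eqref{Cchi} gives $c(X)=\tfrac1{z_1}g_b(\cas_{\chi_1}X,X)$.

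For the first sum I would use \eqref{HQ1}, so that $H_Q(X,e^1_\alpha,e^1_\beta)=-\tfrac{y_1}{z_1}g_b([X,e^1_\alpha],e^1_\beta)$ and the sum collapses to $\tfrac{y_1^2}{z_1^2}\sum_\alpha\|[X,e^1_\alpha]_{\pg_1}\|_{g_b}^2$. To evaluate it I would compute the total $\sum_\alpha\|[X,e^1_\alpha]\|_{g_b}^2$ from the trace identity $\sum_{v}\|[X,v]\|^2_{g_b}=-\kil_{\ggo_1}(X,X)=\tfrac1{z_1}g_b(X,X)$ (the sum over a $g_b$-orthonormal basis $v$ of $\ggo_1$, using $\ad X$ skew on $\ggo_1$); the $\pi_1(\kg)$-part of that basis sum equals $c(X)$ by skew-symmetry, so $\sum_\alpha\|[X,e^1_\alpha]\|_{g_b}^2=\tfrac1{z_1}g_b(X,X)-c(X)$. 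Subtracting the $\pi_1(\kg)$-projection $c(X)$ yields $\sum_\alpha\|[X,e^1_\alpha]_{\pg_1}\|^2_{g_b}=\tfrac1{z_1}g_b(X,X)-2c(X)$, which accounts for the full $I_{\pg_1}$-term and for the summand $-\tfrac{2y_1^2}{x_1^2z_1^3}\cas_{\chi_1}$.

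The delicate part is the second sum, and this is where I expect the main obstacle. I would evaluate $H_Q(X,e^1_\alpha,e^3_\beta)$ from the full formula \eqref{HQ-def}. The terms $-Q([X,e^3_\beta]_\kg,e^1_\alpha)$ and $Q([e^1_\alpha,e^3_\beta]_\kg,X)$ vanish because $[\pg_3,\pg_1]\subseteq\pg_1$ forces the relevant $\kg$-projections to be zero. The subtle point is that \emph{both} remaining terms survive: $Q([X,e^1_\alpha],e^3_\beta)$ keeps only the $\pi_1(\kg)$-component of $[X,e^1_\alpha]$, while $Q([X,e^1_\alpha]_\kg,e^3_\beta)$ is nonzero precisely because $Q(\kg,\pg_3)\ne0$, as recorded by \eqref{L71}. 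Writing the $\pi_1(\kg)$-component of $[X,e^1_\alpha]$ as $\pi_1(\zeta_\alpha)$ with $\zeta_\alpha\in\kg$, the first term equals $-\tfrac{y_1}{c_1\sqrt{B_3}}\la\zeta_\alpha,Z^\beta\ra$ by \eqref{al2}, and the second is computed from \eqref{L71} and the $\kg$-projection of $\pi_1(\zeta_\alpha)$; the constraint $\tfrac{y_1}{c_1}+\tfrac{y_2}{c_2}=0$ from \eqref{Qdef} gives $C_3=\tfrac{y_1}{c_1}(1-A_3)$, and the two contributions combine to
$$
H_Q(X,e^1_\alpha,e^3_\beta)=-\tfrac{y_1(1-A_3)}{c_1\sqrt{B_3}}\la\zeta_\alpha,Z^\beta\ra.
$$
Squaring and summing over $\beta$ replaces $\sum_\beta\la\zeta_\alpha,Z^\beta\ra^2$ by $\|\zeta_\alpha\|^2$, and \eqref{gbpi} turns $\sum_\alpha\|\zeta_\alpha\|^2$ into $\tfrac{c_1}{z_1}\,c(X)=\tfrac{c_1}{z_1^2}g_b(\cas_{\chi_1}X,X)$. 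This produces the term $\tfrac{2S_1}{x_1c_1}\cas_{\chi_1}$ once one checks the bookkeeping identity $\tfrac{y_1}{z_1}+\tfrac{C_3}{B_4}=\tfrac{y_1}{z_1}(1-A_3)$, which identifies $S_1=\tfrac{y_1^2(1-A_3)^2}{x_3z_1^2B_3}$ with the stated expression.

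Adding the two sums gives the claimed formula for $k=1$; for $k=2$ the identical argument, now using the $\ggo_2$-component $A_3\pi_2(Z^\beta)$ of $e^3_\beta$ and the constraint again, yields $S_2$. The only genuinely nontrivial step is the cancellation in the second sum: overlooking the contribution $Q([X,e^1_\alpha]_\kg,e^3_\beta)$ (i.e. tacitly assuming $Q(\kg,\pg_3)=0$) produces an answer off by the factor $(1-A_3)^2$, so the whole point is to keep track of the $\kg$-part of $[X,e^1_\alpha]$ and of the nondiagonal pairing $Q(\kg,\pg_3)$ encoded in \eqref{L71}.
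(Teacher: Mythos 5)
Your proposal is correct and follows essentially the same route as the paper's proof: the same expansion of $(H_Q)_g^2(X,X)$ into the $\pg_k\times\pg_k$ and $\pg_k\times\pg_3$ blocks (with all other blocks killed by Remark \ref{HQnz}), the same use of \eqref{L71} for the nondiagonal pairing $Q(\kg,\pg_3)$, and the same Casimir identification \eqref{chik}--\eqref{Cchi} at the end. The only cosmetic differences are that you re-derive the $\pg_k\times\pg_k$ trace identity by hand (total trace over $\ggo_k$ minus the two $\pi_k(\kg)$-contributions) instead of invoking \eqref{MgBi}, and that you simplify the combined coefficient to $-\tfrac{y_1(1-A_3)}{c_1\sqrt{B_3}}$ via $C_3=\tfrac{y_1}{c_1}(1-A_3)$ and then verify the bookkeeping identity $\tfrac{y_1}{z_1}+\tfrac{C_3}{B_4}=\tfrac{y_1}{z_1}(1-A_3)$, whereas the paper keeps the coefficients in the form $\tfrac{y_k}{z_k}+\tfrac{C_3}{B_4}$ that appears verbatim in $S_k$.
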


\begin{proof} 
Using \eqref{pipj0}-\eqref{pipj3} and $Q(\kg,\pg_k)=0$ one obtains that,
\begin{align*}
(H_Q)_g^2(X,X) 
=& \sum_{\substack{\alpha,\beta\\ 1\leq i,j\leq 3}} \tfrac{1}{x_ix_j} \left( Q([X,e^i_\alpha],e^j_\beta) + Q([X,e^i_\alpha]_\kg,e^j_\beta) - Q([X,e^j_\beta]_\kg,e^i_\alpha)\right)^2\\
=& \sum_{\substack{\alpha,\beta}} \tfrac{1}{x_k^2} \left( Q([X,e^k_\alpha],e^k_\beta) + Q([X,e^k_\alpha]_\kg,e^k_\beta) - Q([X,e^k_\beta]_\kg,e^k_\alpha)\right)^2\\ 
& + \sum_{\substack{\alpha,\beta}} \tfrac{1}{x_kx_3} \left( Q([X,e^k_\alpha],e^3_\beta) + Q([X,e^k_\alpha]_\kg,e^3_\beta) - Q([X,e^3_\beta]_\kg,e^k_\alpha)\right)^2\\
& + \sum_{\substack{\alpha,\beta}} \tfrac{1}{x_3x_k} \left( Q([X,e^3_\alpha],e^k_\beta) + Q([X,e^3_\alpha]_\kg,e^k_\beta) - Q([X,e^k_\beta]_\kg,e^3_\alpha)\right)^2\\
=& \tfrac{1}{x_k^2}\sum_{\substack{\alpha,\beta}} Q([X,e^k_\alpha],e^k_\beta)^2 
+ \tfrac{2}{x_kx_3}\sum_{\substack{\alpha,\beta}}  \left( Q([X,e^k_\alpha],e^3_\beta) + Q([X,e^k_\alpha]_\kg,e^3_\beta)\right)^2.  
\end{align*}
It follows from  \eqref{pipj2} and \eqref{MgBi} that,
\begin{align*}
\sum_{\substack{\alpha,\beta}} Q([X,e^k_\alpha],e^k_\beta)^2 
=& y_k^2\sum_{\substack{\alpha,\beta}} \gk^k([X,e^k_\alpha],e^k_\beta)^2 
= -\tfrac{y_k^2}{z_k^2}\tr{(\ad_{\pg_k}{X})^2} \\ 
=& -\tfrac{4y_k^2}{z_k^2}\gk^k(\Mm(\gk^k)X,X) = -\tfrac{4y_k^2}{z_k^2}\gk^k((\unm\cas_{\chi_k}-\unc I_{\pg_k})X,X) \\ 
=& -\tfrac{4y_k^2}{z_k^3}g_b((\unm\cas_{\chi_k}-\unc I_{\pg_k})X,X).
\end{align*}
On the other hand, by \eqref{L71},  
$$
Q([X,e^k_\alpha]_\kg,e^3_\beta) = g_b([X,e^k_\alpha]_\kg,e^4_\beta)Q(e^4_\beta,e^3_\beta) = \tfrac{1}{B_4}g_b([X,e^k_\alpha],Z^\beta)\tfrac{-C_3}{\sqrt{B_3}},
$$
and using \eqref{basis} we obtain that, 
\begin{align*}
Q([X,e^k_\alpha],e^3_\beta) =& -\tfrac{y_k}{z_k}g_b([e^3_\beta,X],e^k_\alpha) = -\tfrac{y_k}{z_k\sqrt{B_3}}g_b([\vp_3(Z^\beta),X],e^k_\alpha) \\ 
=& \left\{\begin{array}{l} 
-\tfrac{y_1}{z_1\sqrt{B_3}}g_b([Z^\beta,X],e^1_\alpha), \qquad k=1,\\ \\
-\tfrac{y_2A_3}{z_2\sqrt{B_3}}g_b([Z^\beta,X],e^2_\alpha), \qquad k=2.
\end{array}\right.
\end{align*}
All this implies that
\begin{align*}
(H_Q)_g^2(X,X)  
=& g_b\left(\left(-\tfrac{2y_1^2}{x_1^2z_1^3}\cas_{\chi_1}+\tfrac{y_1^2}{x_1^2z_1^3}I_{\pg_1}\right)X,X\right) \\
&+ \tfrac{2}{x_1x_3} \left(\tfrac{y_1}{z_1\sqrt{B_3}} + \tfrac{C_3}{B_4\sqrt{B_3}}\right)^2 \sum_{\substack{\alpha,\beta}}g_b([X,e^1_\alpha],Z^\beta)^2, \qquad\forall X\in\pg_1,
\end{align*}
and
\begin{align*}
(H_Q)_g^2(X,X) 
=& g_b\left(\left(-\tfrac{2y_2^2}{x_2^2z_2^3}\cas_{\chi_2}+\tfrac{y_2^2}{x_2^2z_2^3}I_{\pg_2}\right)X,X\right) \\ 
&+ \tfrac{2}{x_2x_3}\left(\tfrac{y_2A_3}{z_k\sqrt{B_3}} + \tfrac{C_3}{B_4\sqrt{B_3}}\right)^2 \sum_{\substack{\alpha,\beta}}g_b([X,e^2_\alpha],Z^\beta)^2, \qquad\forall X\in\pg_2.
\end{align*}
Finally, formula \eqref{chik} concludes the proof if we set $S_k$ as in the proposition. 
\end{proof}

On the other hand, we have for each fixed $e^3_\gamma\in\pg_3$ that 
\begin{align}
(H_Q)_{g}^2(e^3_\gamma,e^3_\gamma) 
= \sum_{\substack{\alpha,\beta\\ i=1,2}} \tfrac{1}{x_i^2}H_Q(e^3_\gamma,e^i_\alpha,e^i_\beta)^2 
+ \sum_{\substack{\alpha,\beta}} \tfrac{1}{x_3^2}H_Q(e^3_\gamma,e^3_\alpha,e^3_\beta)^2. \label{HQ2}
\end{align} 

\begin{lemma}\label{HQ2-1}
For any $e^3_\gamma\in\pg_3^l$, where $l=0,1,\dots,t$, 
\begin{align*}
\sum_{\substack{\alpha,\beta\\ i=1,2}}  \tfrac{1}{x_i^2}H_Q(e^3_\gamma,e^i_\alpha,e^i_\beta)^2 
= & \tfrac{1}{B_3x_1^2}\left(\tfrac{y_1}{z_1} + \tfrac{C_3}{B_4} \right)^2 \tfrac{1-c_1\lambda_l}{c_1} 
+ \tfrac{1}{B_3x_2^2}\left(\tfrac{y_2A_3}{z_2} + \tfrac{C_3}{B_4}\right)^2 \tfrac{1-c_2\lambda_l}{c_2}.
\end{align*} 
\end{lemma}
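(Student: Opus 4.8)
The plan is to compute each summand $H_Q(e^3_\gamma,e^i_\alpha,e^i_\beta)$ in closed form and then carry out the double sum, reusing almost verbatim the two computations already done in the proofs of Propositions \ref{HQ2s} and \ref{ricggals2}. Since $H_Q$ is a $3$-form it is invariant under cyclic permutations, so $H_Q(e^3_\gamma,e^i_\alpha,e^i_\beta)=H_Q(e^i_\alpha,e^i_\beta,e^3_\gamma)$, which puts the two $\pg_i$-arguments first and the single $\pg_3$-argument last. This is exactly the configuration handled inside the proof of Proposition \ref{HQ2s}: the relations \eqref{pipj0}--\eqref{pipj3} give $[\pg_3,\pg_i]\subset\pg_i$, so the terms of \eqref{HQ-def} containing a $\lb_\kg$ with a $\pg_3$-entry vanish, and then \eqref{basis} evaluates $Q([\,\cdot\,,\cdot],e^3_\gamma)$ while \eqref{L71} evaluates $Q([\,\cdot\,,\cdot]_\kg,e^3_\gamma)$. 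Writing $\epsilon_1:=1$, $\epsilon_2:=A_3$, this yields
\begin{equation*}
H_Q(e^i_\alpha,e^i_\beta,e^3_\gamma)=-\tfrac{1}{\sqrt{B_3}}\left(\tfrac{y_i\epsilon_i}{z_i}+\tfrac{C_3}{B_4}\right)g_b([e^i_\alpha,e^i_\beta],Z^\gamma),
\end{equation*}
where $e^3_\gamma=\tfrac{1}{\sqrt{B_3}}(Z^\gamma_1,A_3Z^\gamma_2)$ with $Z^\gamma\in\kg_l$ and $\la Z^\gamma,Z^\gamma\ra=1$. The scalar factor then comes out of the sum and the whole problem reduces to evaluating $\sum_{\alpha,\beta}g_b([e^i_\alpha,e^i_\beta],Z^\gamma)^2$.

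For this remaining sum I would use bi-invariance of $g_b$ together with the $g_b$-orthonormality of $\{e^i_\alpha\}$. Writing $g_b([e^i_\alpha,e^i_\beta],Z^\gamma)=g_b(e^i_\alpha,[e^i_\beta,Z^\gamma])$ and observing that $[e^i_\beta,Z^\gamma]=[e^i_\beta,Z^\gamma_i]\in\pg_i$ (the factor $Z^\gamma_j$, $j\ne i$, commutes with $\ggo_i$, and $[\pi_i(\kg),\pg_i]\subset\pg_i$ by the reductive decomposition $\ggo_i=\pi_i(\kg)\oplus\pg_i$), summing first in $\alpha$ and then in $\beta$ collapses the double sum to a trace,
\begin{equation*}
\sum_{\alpha,\beta}g_b([e^i_\alpha,e^i_\beta],Z^\gamma)^2=\sum_\beta g_b([e^i_\beta,Z^\gamma],[e^i_\beta,Z^\gamma])=-\tr\big((\ad{Z^\gamma_i}|_{\pg_i})^2\big),
\end{equation*}
the last step because $\ad{Z^\gamma_i}$ is $g_b$-skew on $\pg_i$. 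This is precisely the trace computed in the proof of Proposition \ref{ricggals2}: by \eqref{cij} with $c_{il}=\lambda_lc_i$ and by \eqref{al2},
\begin{equation*}
\tr\big((\ad{Z^\gamma_i}|_{\pg_i})^2\big)=\kil_{\ggo_i}(Z^\gamma_i,Z^\gamma_i)-\kil_{\pi_i(\kg_l)}(Z^\gamma_i,Z^\gamma_i)=(1-c_i\lambda_l)\kil_{\ggo_i}(Z^\gamma_i,Z^\gamma_i)=-\tfrac{1-c_i\lambda_l}{c_i},
\end{equation*}
so that $\sum_{\alpha,\beta}g_b([e^i_\alpha,e^i_\beta],Z^\gamma)^2=\tfrac{1-c_i\lambda_l}{c_i}$.

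Substituting this back, squaring the closed form above and summing over $i=1,2$ with the weights $\tfrac{1}{x_i^2}$ produces exactly the asserted expression, the choice $\epsilon_1=1$ giving the factor $(\tfrac{y_1}{z_1}+\tfrac{C_3}{B_4})^2$ and $\epsilon_2=A_3$ the factor $(\tfrac{y_2A_3}{z_2}+\tfrac{C_3}{B_4})^2$. The argument is essentially bookkeeping once the two reductions are in place. I expect the only genuine point of care to be the normalizations: $e^3_\gamma$ is a $g_b$-unit vector whereas its underlying $Z^\gamma$ is $\ip$-unit, and it is this that makes $\kil_{\ggo_i}(Z^\gamma_i,Z^\gamma_i)=-\tfrac{1}{c_i}$ and hence yields the constants $\tfrac{1-c_i\lambda_l}{c_i}$ rather than a rescaled version; the other small check is that the clean expression for $H_Q$ on $\pg_i\times\pg_i\times\pg_3$ transfers verbatim after the cyclic relabelling, which follows from \eqref{pipj0}--\eqref{pipj3} together with $Q(\pg_i,\pg_3)=0$ and $Q(\kg,\pg_i)=0$.
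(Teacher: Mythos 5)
Your proof is correct and takes essentially the same route as the paper's: both expand $H_Q(e^3_\gamma,e^i_\alpha,e^i_\beta)$, discard the two terms whose $\kg$-projection involves a $\pg_3$-entry, evaluate the two surviving terms via $Q|_{\pg_i\times\pg_i}=-\tfrac{y_i}{z_i}g_b$ and \eqref{L71}, and reduce the double sum to $-\tr\big((\ad{Z^\gamma_i}|_{\pg_i})^2\big)=\tfrac{1-c_i\lambda_l}{c_i}$ using \eqref{al2} and \eqref{cij}. The only cosmetic differences are your cyclic reordering of the arguments and your justification of the vanishing cross terms via $[\pg_3,\pg_i]\subset\pg_i$ rather than the paper's $Q(\kg,\pg_i)=0$; both are valid.
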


\begin{proof}
Since $Q(\kg,\pg_i)=0$, for $i=1,2$ we have that 
\begin{align*}
H_Q(e^3_\gamma,e^i_\alpha,e^i_\beta) 
= Q([e^3_\gamma,e^i_\alpha],e^i_\beta) + Q([e^i_\alpha,e^i_\beta]_\kg,e^3_\gamma),
\end{align*} 
and since
\begin{align*}
Q([e^3_\gamma,e^i_\alpha],e^i_\beta) 
=& -\tfrac{y_i}{z_i}g_b([e^3_\gamma,e^i_\alpha],e^i_\beta) 
= \left\{\begin{array}{cl} 
-\tfrac{y_1}{z_1\sqrt{B_3}}g_b([Z^\gamma,e^1_\alpha],e^1_\beta), & \qquad i=1, \\ \\ 
-\tfrac{y_2A_3}{z_2\sqrt{B_3}}g_b([Z^\gamma,e^2_\alpha],e^2_\beta), & \qquad i=2, 
\end{array}\right.
\end{align*} 
and
\begin{align*}
Q([e^i_\alpha,e^i_\beta]_\kg,e^3_\gamma) 
=& g_b([e^i_\alpha,e^i_\beta]_\kg,e^4_\gamma) Q(e^4_\gamma,e^3_\gamma) 
= g_b([e^4_\gamma,e^i_\alpha],e^i_\beta) \tfrac{-C_3}{\sqrt{B_4B_3}} \\
=& \tfrac{-C_3}{B_4\sqrt{B_3}} g_b([Z^\gamma,e^i_\alpha],e^i_\beta), 
\end{align*} 
one obtains that
\begin{align*}
\sum_{\substack{\alpha,\beta\\ i=1,2}} \tfrac{1}{x_i^2}H_Q(e^3_\gamma,e^i_\alpha,e^i_\beta)^2 
=&  -\tfrac{1}{B_3x_1^2}\left(\tfrac{y_1}{z_1} + \tfrac{C_3}{B_4} \right)^2 \tr{(\ad{Z^\gamma}|_{\pg_1})^2} \\ 
& -\tfrac{1}{B_3x_2^2}\left(\tfrac{y_2A_3}{z_2} + \tfrac{C_3}{B_4}\right)^2 \tr{(\ad{Z^\gamma}|_{\pg_2})^2}. 
\end{align*} 
On the other hand, if in addition $e^3_\gamma\in\pg_k^l$, i.e., $Z^\gamma\in\kg_l$, then by \eqref{al2} and \eqref{cij},
\begin{align*}
\tr{(\ad{Z^\gamma}|_{\pg_i})^2} 
=& \tr{(\ad_{\ggo_i}{Z^\gamma_i})^2} - \tr{(\ad{Z^\gamma_i}|_{\pi_i(\kg)})^2} 
= \kil_{\ggo_i}(Z^\gamma_i,Z^\gamma_i) - \kil_{\pi_i(\kg)}(Z^\gamma_i,Z^\gamma_i) \\ 
=& (1-c_i\lambda_l)\kil_{\ggo_i}(Z^\gamma_i,Z^\gamma_i)  
= (1-c_i\lambda_l)\tfrac{-1}{c_i} = -\tfrac{1-c_i\lambda_l}{c_i}, \qquad i=1,2,
\end{align*}
concluding the proof.  
\end{proof}

\begin{proposition}\label{HQ2-s2}
If $e^3_\gamma\in\pg_3^l$, where $l=0,1,\dots,t$, then 
\begin{align*}
(H_Q)_{g}^2(e^3_\gamma,e^3_\gamma) 
= & \tfrac{1}{x_1^2B_3} \left(\tfrac{y_1}{z_1} + \tfrac{C_3}{B_{4}} \right)^2 \tfrac{1-c_1\lambda_l}{c_1} 
 +\tfrac{1}{x_2^2B_3} \left(\tfrac{y_2A_3}{z_2} + \tfrac{C_3}{B_{4}}\right)^2 \tfrac{1-c_2\lambda_l}{c_2} \\ 
& +\tfrac{\lambda_l}{x_3^2B_3^3}\left(\tfrac{y_1}{c_1}+A_3^3\tfrac{y_2}{c_2} + \tfrac{3C_3}{B_{4}\sqrt{B_4}} \left(\tfrac{z_1}{c_1}+A_3^2\tfrac{z_2}{c_2}\right)\right)^2.  
\end{align*}
\end{proposition}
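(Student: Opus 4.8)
The plan is to begin from the splitting \eqref{HQ2} of $(H_Q)_g^2(e^3_\gamma,e^3_\gamma)$ into a ``mixed'' sum over $\pg_1,\pg_2$ and a ``pure'' sum over $\pg_3$. The mixed sum is exactly the content of Lemma \ref{HQ2-1}, which already produces the first two terms of the asserted formula. Hence the entire task reduces to evaluating the pure sum $\sum_{\alpha,\beta}\tfrac{1}{x_3^2}H_Q(e^3_\gamma,e^3_\alpha,e^3_\beta)^2$, i.e.\ to computing $H_Q$ on $\pg_3\times\pg_3\times\pg_3$.

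First I would compute the single number $H_Q(e^3_\gamma,e^3_\alpha,e^3_\beta)$ directly from the definition \eqref{HQ-def}, which here consists of one full-bracket term $Q([e^3_\gamma,e^3_\alpha],e^3_\beta)$ together with three $\kg$-projection terms. Writing $e^3_\alpha=\tfrac{1}{\sqrt{B_3}}(Z^\alpha_1,A_3Z^\alpha_2)$ and using $[\pg_3,\pg_3]\subset\pg_3+\kg$ from \eqref{pipj3}, every bracket $[e^3_\alpha,e^3_\beta]$ equals $\tfrac{1}{B_3}(W_1,A_3^2W_2)$ with $W=[Z^\alpha,Z^\beta]\in\kg$. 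For the full-bracket term I expand $Q=y_1\kil_{\ggo_1}+y_2\kil_{\ggo_2}$ on the two factors and apply the alignment identity \eqref{al2}; this collapses it to $-\tfrac{1}{B_3^{3/2}}\big(\tfrac{y_1}{c_1}+A_3^3\tfrac{y_2}{c_2}\big)\la W,Z^\beta\ra$. For each of the three $\kg$-projection terms I first extract the $\kg$-component of the relevant bracket from the $g_b$-orthogonal decomposition $\pi_1(\kg)\oplus\pi_2(\kg)=\pg_3\oplus\kg$ in \eqref{pi1pi2}, then pair it against $\pg_3$ using $g_b|_{\kg\times\kg}=B_4\ip$, the basis $\{e^4_\delta\}$, and the pairing \eqref{L71}; this yields a contribution proportional to $\tfrac{C_3}{B_4}$.

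The crux is to verify that the three $\kg$-projection terms all contribute the \emph{same} scalar. Here I would invoke the $\Ad(K)$-invariance of $\ip=-\kil_\ggo|_{\kg\times\kg}$, equivalently the associativity $\la[A,B],C\ra=\la A,[B,C]\ra$, to rewrite $\la[Z^\gamma,Z^\beta],Z^\alpha\ra$ and $\la[Z^\alpha,Z^\beta],Z^\gamma\ra$ as $\pm\la[Z^\gamma,Z^\alpha],Z^\beta\ra=\pm\la W,Z^\beta\ra$; the signs work out so that the three terms coincide and combine with a factor $3$. Collecting the full-bracket term and the three equal $\kg$-terms gives $H_Q(e^3_\gamma,e^3_\alpha,e^3_\beta)=\tfrac{1}{B_3^{3/2}}\,\Xi\,\la[Z^\gamma,Z^\alpha],Z^\beta\ra$, where $\Xi$ is precisely the factor that appears squared in the third summand of the statement.

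It remains to square and sum over $\alpha,\beta$. Only $\la[Z^\gamma,Z^\alpha],Z^\beta\ra$ depends on the indices, and for an $\ip$-unit vector $Z^\gamma\in\kg_l$ one has $\sum_{\alpha,\beta}\la[Z^\gamma,Z^\alpha],Z^\beta\ra^2=\sum_\alpha\|[Z^\gamma,Z^\alpha]\|^2=-\kil_\kg(Z^\gamma,Z^\gamma)=\lambda_l$, using the $\ip$-skewness of $\ad Z^\gamma$ and \eqref{al1}. This supplies the factor $\lambda_l$ and the power $B_3^{-3}$, and adding the result to the output of Lemma \ref{HQ2-1} finishes the proof. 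I expect the delicate part to be the bookkeeping of the three $\kg$-terms, namely getting their signs and the exact power of $B_4$ right; everything else is a routine substitution.
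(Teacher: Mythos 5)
Your strategy coincides step for step with the paper's own proof: the splitting \eqref{HQ2}, the use of Lemma \ref{HQ2-1} for the mixed sum, the reduction to $H_Q$ on $\pg_3\times\pg_3\times\pg_3$ with one full-bracket term plus three $\kg$-projection terms shown equal via ad-invariance of $\ip$, and the final identity $\sum_{\alpha,\beta}\la[Z^\gamma,Z^\alpha],Z^\beta\ra^2=-\kil_\kg(Z^\gamma,Z^\gamma)=\lambda_l$ are exactly the paper's steps. The problem is an internal inconsistency at the one place where precision matters. You correctly assert that each $\kg$-projection term is proportional to $\tfrac{C_3}{B_4}$; indeed, by \eqref{L71} and \eqref{gbpi},
\begin{equation*}
Q([e^3_\gamma,e^3_\alpha]_\kg,e^3_\beta)
=g_b([e^3_\gamma,e^3_\alpha],e^4_\beta)\,Q(e^4_\beta,e^3_\beta)
=\tfrac{1}{\sqrt{B_4}}\la[Z^\gamma,Z^\alpha],Z^\beta\ra\cdot\tfrac{-C_3}{\sqrt{B_3B_4}}
=\tfrac{-C_3}{B_4\sqrt{B_3}}\la[Z^\gamma,Z^\alpha],Z^\beta\ra,
\end{equation*}
so the three $\kg$-terms contribute $-\tfrac{3C_3}{B_4}\bigl(\tfrac{z_1}{c_1}+A_3^2\tfrac{z_2}{c_2}\bigr)$ to your $\Xi$. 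But then $\Xi$ is \emph{not} the factor squared in the third summand of the statement, which carries $\tfrac{3C_3}{B_4\sqrt{B_4}}\bigl(\tfrac{z_1}{c_1}+A_3^2\tfrac{z_2}{c_2}\bigr)$; your two claims differ by a factor $\sqrt{B_4}$ and cannot both hold. As written, your argument does not prove the stated formula --- it contradicts it.

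The resolution is that your computation is right and the printed statement is wrong: this is precisely the error reported in the paper's Corrigendum (Appendix \ref{app}), whose Proposition \ref{prop} deletes the $\sqrt{B_4}$, yielding exactly the factor $\tfrac{3C_3}{B_4}\bigl(\tfrac{z_1}{c_1}+A_3^2\tfrac{z_2}{c_2}\bigr)$ that your outline produces. The paper's original proof fails at the very step you handled correctly: there, $Q(e^4_\beta,e^3_\beta)$ is replaced by $\tfrac{-C_3}{B_4\sqrt{B_3}}$ instead of the value $\tfrac{-C_3}{\sqrt{B_3B_4}}$ given by \eqref{L71}, which generates the spurious $\sqrt{B_4}$. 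So your method and intermediate formulas are sound and in fact prove the corrected Proposition \ref{prop}; the genuine gap in your proposal is the unverified final assertion that your $\Xi$ ``is precisely'' the stated factor. Had you written out $\Xi$ and compared it with the statement, you would have uncovered the error rather than claimed to confirm it --- and the discrepancy is not cosmetic: it is what collapses the original condition \eqref{BRF3ra-2} into the much cleaner equation \eqref{eq}, and with it the uniqueness assertions of the paper's main theorem.
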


\begin{proof} 
It follows from \eqref{HQ2} and Lemma \ref{HQ2-1} that 
\begin{align*}
(H_Q)_{g}^2(e^3_\gamma,e^3_\gamma) 
= & \tfrac{1}{x_1^2B_3} \left(\tfrac{y_1}{z_1} + \tfrac{C_3}{B_{4}} \right)^2 \tfrac{1-c_1\lambda_l}{c_1} 
 +\tfrac{1}{x_2^2B_3} \left(\tfrac{y_2A_3}{z_2} + \tfrac{C_3}{B_{4}}\right)^2 \tfrac{1-c_2\lambda_l}{c_2}\\ 
 &+ \tfrac{1}{x_3^2}\sum_{\substack{\alpha,\beta}} H_Q(e^3_\gamma,e^3_\alpha,e^3_\beta)^2.
\end{align*}
By \eqref{gbpi},
\begin{align*}
Q([e^3_\gamma,e^3_\alpha],e^3_\beta) 
=& Q(\tfrac{1}{B_3}([Z^\gamma_1,Z^\alpha_1],A_3^2[Z^\gamma_2,Z^\alpha_2]),\tfrac{1}{\sqrt{B_3}}(Z^\beta_1,A_3Z^\beta_2)) \\ 
=& \tfrac{1}{B_3\sqrt{B_3}}\left(-\tfrac{y_1}{z_1}g_b([Z^\gamma_1,Z^\alpha_1],Z^\beta_1) 
-\tfrac{y_2}{z_2}A_3^3g_b([Z^\gamma_2,Z^\alpha_2],Z^\beta_2)\right) \\ 
=& \tfrac{-1}{B_3\sqrt{B_3}}\left(\tfrac{y_1}{c_1}+A_3^3\tfrac{y_2}{c_2}\right)\la[Z^\gamma,Z^\alpha],Z^\beta\ra,   
\end{align*} 
and by \eqref{L71},
\begin{align*}
Q([e^3_\gamma,e^3_\alpha]_\kg,e^3_\beta)
=& g_b([e^3_\gamma,e^3_\alpha],e^4_\beta)Q(e^4_\beta,e^3_\beta)
=\tfrac{-C_3}{B_{4}\sqrt{B_3}} g_b([e^{4}_\beta,e^3_\gamma],e^3_\alpha) \\ 
=&\tfrac{-C_3}{B_{4}\sqrt{B_3}} g_b([\tfrac{1}{\sqrt{B_4}}(Z^\beta_1,Z^\beta_2),\tfrac{1}{\sqrt{B_3}}(Z^\gamma_1,A_3Z^\gamma_2)],\tfrac{1}{\sqrt{B_3}}(Z^\alpha_1,A_3Z^\alpha_2)) \\
=&\tfrac{-C_3}{B_{4}\sqrt{B_4}B_3\sqrt{B_3}} \left(g_b([Z^\beta_1,Z^\gamma_1],Z^\alpha_1) +A_3^2g_b([Z^\beta_2,Z^\gamma_2],Z^\alpha_2)\right) \\ 
=&\tfrac{-C_3}{B_{4}\sqrt{B_4}B_3\sqrt{B_3}} \left(\tfrac{z_1}{c_1}+A_3^2\tfrac{z_2}{c_2}\right)\la[Z^\gamma,Z^\alpha],Z^\beta\ra,  
\end{align*} 
which also gives 
$$
Q([e^3_\gamma,e^3_\alpha]_\kg,e^3_\beta) = -Q([e^3_\gamma,e^3_\beta]_\kg,e^3_\alpha) = Q([e^3_\alpha,e^3_\beta]_\kg,e^3_\gamma).
$$
All this implies that 
\begin{align*}
&\sum_{\substack{\alpha,\beta}} H_Q(e^3_\gamma,e^3_\alpha,e^3_\beta)^2 \\
=& \sum_{\substack{\alpha,\beta}} \left(\tfrac{-1}{B_3\sqrt{B_3}}\left(\tfrac{y_1}{c_1}+A_3^3\tfrac{y_2}{c_2}\right) + \tfrac{-3C_3}{B_{4}\sqrt{B_4}B_3\sqrt{B_3}} \left(\tfrac{z_1}{c_1}+A_3^2\tfrac{z_2}{c_2}\right)\right)^2 \la[Z^\gamma,Z^\alpha],Z^\beta\ra^2  \\ 
=& \tfrac{1}{B_3^3}\left(\tfrac{y_1}{c_1}+A_3^3\tfrac{y_2}{c_2} + \tfrac{3C_3}{B_{4}\sqrt{B_4}} \left(\tfrac{z_1}{c_1}+A_3^2\tfrac{z_2}{c_2}\right)\right)^2 (-\kil_\kg(Z^\gamma,Z^\gamma)),  
\end{align*} 
and since by \eqref{al1},
$$
\kil_\kg(Z^\gamma,Z^\gamma) = \kil_{\kg_l}(Z^\gamma,Z^\gamma) = \lambda_l\kil_\ggo(Z^\gamma,Z^\gamma) = -\lambda_l,
$$
the lemma follows.  
\end{proof}

\section{Bismut Ricci flat metrics}\label{BRF-sec}

In this section, we study the existence of BRF generalized metrics on an aligned homogeneous space $M=G/K$ with $s=2$.  Recall from \S\ref{preli3} the positive constants $c_1,c_2$ and $\lambda_1,\dots,\lambda_t$,  $\lambda_0=0$, and that 
$$
c_2=\tfrac{c_1}{c_1-1}, \qquad y_2=-\tfrac{y_1}{c_1-1}.
$$
Without any lost of generality, we can consider the following normalizations (see \S\ref{preli1}): 
$$
1<c_1\leq 2\leq c_2,  \qquad  z_2=\tfrac{1}{c_1-1}, \qquad y_1=1,
$$
from which it is easy to check that 
$$
A_3=-z_1, \qquad 
B_3= \tfrac{z_1(z_1+1)}{c_1}, 
\qquad
B_{4}=\tfrac{z_1+1}{c_1}, \qquad
C_3= \tfrac{z_1+1}{c_1}, 
$$
and the constants involved in Proposition \ref{HQ2s} are given by 
\begin{align}
S_{1}=\tfrac{c_1(z_1+1)}{x_{3}z_1^3}, \qquad
S_{2} = \tfrac{c_1(z_1+1)}{x_{3}z_1}. \label{S1S2}
\end{align}

We are finally ready to prove the main result of this paper.  

\begin{theorem}\label{BRFggs2}
Let $M=G/K$ be an aligned homogeneous space with $s=2$ such that Assumption \ref{assum} holds.   
\begin{enumerate}[{\rm (i)}]
\item The $G$-invariant generalized metric $(g_0,H_{0})$ defined by $H_{0}:=H_{Q_0}$, 
\begin{align}
g_0:=\left(\tfrac{1}{z_1},1,\tfrac{z_1+1}{z_1}\right)_{g_b}, \quad g_b:=z_1(-\kil_{\ggo_1})+ \tfrac{1}{c_1-1}(-\kil_{\ggo_2}), \quad Q_0:=\kil_{\ggo_1}-\tfrac{1}{c_1-1}\kil_{\ggo_2}, \label{gthm}
\end{align} 
where $z_1=c_1-1$, 
is Bismut Ricci flat.  In the case when $\kg$ is abelian, $(g_0,H_{0})$ is Bismut Ricci flat for any $z_1>0$.

\item If $\kg$ is not semisimple, then $(g_0,H_{0})$ is the unique $G$-invariant Bismut Ricci flat generalized metric on $M=G/K$ up to scaling of the form $(g=(x_1,x_2,x_3)_{g_b},H_Q)$.   

\item If $\kg$ is not abelian and neither $\cas_{\chi_{1}}$ nor $\cas_{\chi_{2}}$ is a multiple of the identity, then $(g_0,H_{0})$ is the unique $G$-invariant Bismut Ricci flat generalized metric on $M=G/K$ up to scaling of the form $(g=(x_1,x_2,x_3)_{g_b},H_Q)$. 
\end{enumerate}
\end{theorem}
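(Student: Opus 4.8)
The plan is to set up the Bismut Ricci flat condition explicitly as a system of scalar equations, using the eigenvalue decompositions already computed in Propositions \ref{ricggals2} and \ref{HQ2s} together with Proposition \ref{HQ2-s2}. By Proposition \ref{harms2}, harmonicity of $H_Q$ is automatic, so BRF reduces to the single tensor equation $\Ricci(g)=\unc(H_Q)_g^2$. Since both $\Ricci(g)$ and $(H_Q)_g^2$ are block-diagonal with respect to $\pg=\pg_1\oplus\pg_2\oplus\pg_3^0\oplus\dots\oplus\pg_3^t$ (by Proposition \ref{ricggals2}(iii) and Lemma \ref{HQ2ort}), the equation splits into scalar conditions on each block. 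On $\pg_k$ ($k=1,2$) one must match the coefficient of $I_{\pg_k}$ and the coefficient of $\cas_{\chi_k}$ separately, provided $\cas_{\chi_k}$ is not a multiple of the identity; when $\cas_{\chi_k}$ is a scalar, only the total sum is constrained. On each $\pg_3^l$ one gets a scalar equation depending on $\lambda_l$.

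\textbf{Proof of (i).} For the existence statement I would simply substitute the candidate $g_0=\left(\tfrac{1}{z_1},1,\tfrac{z_1+1}{z_1}\right)_{g_b}$ with $z_1=c_1-1$ into the formulas. With these values and the normalizations $A_3=-z_1$, $B_3=\tfrac{z_1(z_1+1)}{c_1}$, $C_3=B_4=\tfrac{z_1+1}{c_1}$, and $S_1,S_2$ as in \eqref{S1S2}, I expect the coefficient of $\cas_{\chi_k}$ in $\Ricci(g_0)|_{\pg_k}$ and in $\unc(H_Q)_{g_0}^2|_{\pg_k}$ to coincide, and likewise the $I_{\pg_k}$-coefficients, so that parts (i) and (ii) of Proposition \ref{ricggals2} match Proposition \ref{HQ2s} term by term. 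On $\pg_3^l$ one checks $r_l=\unc(H_Q)_{g_0}^2(e_\gamma^3,e_\gamma^3)$ using Proposition \ref{HQ2-s2}; here the $\lambda_l$-dependence and $\lambda_l$-independent parts should separately agree, which is what forces the specific value $z_1=c_1-1$. When $\kg$ is abelian all $\lambda_l=0$ and the $\pg_3$ equation degenerates, which is why any $z_1>0$ then works — I would verify that the remaining $\pg_1,\pg_2$ equations hold for the scaled metric for every $z_1$.

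\textbf{Proof of (ii) and (iii): uniqueness.} This is where the real work lies. By scaling invariance I normalize $x_3=1$ (say) and treat $x_1,x_2,z_1$ as unknowns, with $y_1=1$ fixed. In case (iii), since neither $\cas_{\chi_k}$ is scalar, matching $I_{\pg_k}$ and $\cas_{\chi_k}$ coefficients gives \emph{two} equations for each $k$; the $I_{\pg_k}$-equations let me solve $x_1,x_2$ in terms of $x_3,z_1$ in closed form, while the $\cas_{\chi_k}$-equations become constraints forcing $z_1=c_1-1$. In case (ii), $\kg$ has nontrivial center $\kg_0$, so there is a block $\pg_3^0$ with $\lambda_0=0$; combining the $\pg_3^0$ equation (which isolates the $\lambda_l$-independent terms) with the $\pg_1,\pg_2$ equations should again pin down the parameters uniquely. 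The main obstacle I anticipate is algebraic rather than conceptual: after substituting $A_3=-z_1$, $B_3,B_4,C_3$ and $S_1,S_2$, the resulting equations are rational in $x_1,x_2,x_3,z_1,c_1$, and showing that the only positive solution (up to scaling) is $(g_0,H_0)$ will require careful elimination — in particular ruling out spurious positive roots of the polynomial equation in $z_1$. I would organize this by first using the two $\cas_{\chi_k}$-matching conditions (which are comparatively clean) to get relations among $x_1,x_2,x_3,z_1$, then feeding these into the $I_{\pg_k}$-conditions to reduce to a single equation in $z_1$ whose unique positive root is $c_1-1$.
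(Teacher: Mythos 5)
Your overall strategy is exactly the paper's: Proposition \ref{harms2} disposes of harmonicity, Proposition \ref{ricggals2}(iii) together with Lemma \ref{HQ2ort} gives the block-diagonal splitting, and the condition $\ricci(g)=\unc (H_Q)_g^2$ becomes one equation on each of $\pg_1$, $\pg_2$ and $\pg_3^l$, which the paper rewrites as \eqref{BRF1ra}, \eqref{BRF2da} and (Lemma \ref{lemma}) \eqref{BRF3ra-2}; existence is a substitution check and uniqueness comes from matching $I_{\pg_k}$- against $\cas_{\chi_k}$-coefficients, as you propose. The problem is in your uniqueness setup, where one step genuinely fails: you cannot normalize both $x_3=1$ and $y_1=1$. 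The scaling freedom is one-dimensional, $(g,H_Q)\mapsto (cg,cH_Q)$, and since $H_Q$ is linear in $Q$ this acts by $(x_1,x_2,x_3,y_1)\mapsto (cx_1,cx_2,cx_3,cy_1)$; in particular $x_3/y_1$ is a homothety invariant. The paper spends this freedom on $y_1=1$ (the normalizations at the start of \S\ref{BRF-sec}) and keeps all of $(z_1,x_1,x_2,x_3)$ as unknowns. Your slice $\{x_3=1,\,y_1=1\}$ contains no BRF structure at all: the solution $(g_0,H_0)$ has $x_3/y_1=\tfrac{z_1+1}{z_1}=\tfrac{c_1}{c_1-1}>1$, so no point of its homothety orbit satisfies both normalizations, and the elimination you plan would terminate in an empty solution set — it would ``prove'' non-existence rather than uniqueness. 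The repair is immediate (drop one of the two normalizations), but as written this invalidates your treatment of parts (ii) and (iii).

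Two smaller corrections to how you distribute the work among the equations. In part (iii), the $I_{\pg_k}$-coefficients of \eqref{BRF1ra}--\eqref{BRF2da} give $x_1=\tfrac{1}{z_1}$ and $x_2=1$ outright ($x_3$ does not enter them); the $\cas_{\chi_k}$-coefficients then give $1-t=\tfrac{1}{t}-1$ for $t=\tfrac{x_3}{x_1(z_1+1)}$, hence $x_3=\tfrac{z_1+1}{z_1}$; and it is the $\pg_3^l$-equations of Lemma \ref{lemma} with $\lambda_l>0$ (available precisely because $\kg$ is non-abelian) that finally force $z_1=c_1-1$ — not the $\cas_{\chi_k}$-equations, as you state. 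Likewise, in part (ii) the phrase ``pin down the parameters uniquely'' cannot be taken literally: when $\kg$ is abelian only $\lambda_0=0$ occurs and $z_1$ is not determined (consistently with part (i)); what the paper does there is use the $\pg_3^0$-equation to force $x_3=\tfrac{z_1+1}{z_1}$ and then \eqref{BRF1ra}--\eqref{BRF2da} to force $x_1=\tfrac{1}{z_1}$, $x_2=1$. Finally, be aware that both your plan and the paper's original proof rest on Proposition \ref{HQ2-s2}, which the Corrigendum (Appendix \ref{app}) shows is incorrect; with the corrected formula (Proposition \ref{prop}) the $\pg_3$-equations no longer pin down $z_1$, and the statement itself is superseded by Theorem \ref{thm}.
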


\begin{remark}\label{lowdim}
The lowest dimensional (irreducible) examples provided by the theorem are:
\begin{enumerate}[\small{$\bullet$}]
\item $M^5=\SU(2)\times\SU(2)/S^1$ (found in \cite{PdsRff1}, see Example \ref{PR1-exa} below), 

\item $M^{10}=\SU(2)\times\SU(3)/S^1$ (new),  

\item $M^{12}=\SU(3)\times\SU(3)/\U(2)$ (found in \cite{PdsRff2}, see Example \ref{PR2-exa} below).  
\end{enumerate}
\end{remark}

\begin{remark}\label{rem0}
The closed $3$-form $H_0$ also depends on $z_1$; indeed, its definition \eqref{HQ-def} uses the $g_b$-orthogonal reductive decomposition.   
\end{remark}

\begin{remark}\label{rem1}
It follows from Proposition \ref{ricggals2} (see also the computation of $r_l$ in the proof below) that for any $z_1>0$, the Ricci eigenvalues of the metric $g_0$ given in \eqref{gthm} are 
$$
\unc, \quad \tfrac{c_1-1}{4} \quad\mbox{and} \quad 
\tfrac{c_1}{4}\left(1-\lambda_l\right), 
\quad l=0,\dots,t, 
$$
with multiplicities $\dim{\pg_1}$, $\dim{\pg_2}$ and $\dim{\kg_l}$, respectively.  Thus $g_0$ is Einstein if and only if $c_1=2$, $\kg$ is semisimple and $\lambda_1=\dots=\lambda_t=\unm$, which is equivalent to $M=G\times G/\Delta G$, where $G$ is a compact semisimple Lie group and $g_0=2\gk$, the symmetric metric.  
\end{remark}

\begin{remark}\label{rem4}
If $\kg$ is not abelian, then the only way to obtain $g_b=\gk$ up to scaling is when $c_1=2$ (i.e., $g_0=(1,1,2)_{\gk}$ and $Q_0=\kil_{\ggo_1}-\kil_{\ggo_2}$), e.g.\ when $M=G\times G/\Delta K$ for any non-abelian subgroup $K\subset G$.  On the other hand, $g_0$ is never a normal metric.  
\end{remark}

\begin{proof}[Proof of Theorem \ref{BRFggs2}]
It follows from Proposition \ref{harms2}, Proposition \ref{ricggals2}, (iii), (iv) and Lemma \ref{HQ2ort} that the BRF condition for $(g,H_Q)$ consists of the following three equations, one for each $\pg_k$, $k=1,2,3$:   
\begin{align}
4x_1\Ricci(g)|_{\pg_1} =& T_1, \notag\\   
4x_2\Ricci(g)|_{\pg_2} =& T_2, \notag\\
4x_3r_l  =& (H_Q)_g^2(e^3_\gamma,e^3_\gamma), \quad\forall e^3_\gamma\in\pg^l_3, \quad l=0,1,\dots,t,\label{BRF3ra}
\end{align}
where $(H_Q)_g^2|_{\pg_k\times\pg_k}=g_b(T_k\cdot,\cdot)$ for $k=1,2$; indeed, 
$$
4x_3r_l  = 4x_3g_b(\Ricci(g)e^3_\gamma,e^3_\gamma) = 4g(\Ricci(g)e^3_\gamma,e^3_\gamma) = (H_Q)_g^2(e^3_\gamma,e^3_\gamma).  
$$  
According to Proposition \ref{ricggals2}, (i), (ii) and Proposition \ref{HQ2s}, the above first and second equations are respectively equivalent to,
\begin{align*}
\tfrac{1}{z_1}I_{\pg_1} 
+ 2  \left(\tfrac{1}{z_1} - \tfrac{x_{3}}{x_1c_1B_{3}}\right) \cas_{\chi_{1}} 
=& \tfrac{y_1^2}{x_1^2z_1^3}I_{\pg_1} 
+\left(\tfrac{2S_{1}}{x_1c_1}-\tfrac{2y_1^2}{x_1^2z_1^3}\right)\cas_{\chi_{1}}, \\
\tfrac{1}{z_2}I_{\pg_2} 
+ 2 \left(\tfrac{1}{z_2} - \tfrac{x_{3}A_{3}^2}{x_2c_2B_{3}} \right) \cas_{\chi_{2}} 
=&\tfrac{y_2^2}{x_2^2z_2^3}I_{\pg_2} 
+\left(\tfrac{2S_{2}}{x_2c_2}-\tfrac{2y_2^2}{x_2^2z_2^3}\right)\cas_{\chi_{2}}, 
\end{align*}
which are easily seen to be equivalent to,
\begin{align}
I_{\pg_1} + 2  \left(1 - \tfrac{x_{3}}{x_1(z_1+1)}\right)\cas_{\chi_{1}} 
=& \tfrac{1}{x_1^2z_1^2}\left(I_{\pg_1}
+2\left(\tfrac{x_1(z_1+1)}{x_3}-1\right)\cas_{\chi_{1}}\right), \label{BRF1ra} \\
I_{\pg_2} + 2 \left(1 - \tfrac{x_{3}z_1}{x_2(z_1+1)} \right) \cas_{\chi_{2}} 
=& \tfrac{1}{x_2^2}\left(I_{\pg_2} 
+2\left(\tfrac{x_2(z_1+1)}{x_3z_1}-1\right)\cas_{\chi_{2}}\right).   \label{BRF2da}
\end{align}
In particular, if $\cas_{\chi_{1}}$ is not a multiple of the identity then $x_1=\tfrac{1}{z_1}$, and if $\cas_{\chi_{2}}$ is not a multiple of the identity then $x_2=1$.     

\begin{lemma}\label{lemma}
The third set of equations \eqref{BRF3ra} is equivalent to 
\begin{align}
& \lambda_l\left((z_1+1)^2
+ \left(\tfrac{(z_1+1)^2}{z_1^2}-x_3^2\right)\left(\tfrac{1}{x_1^2} + \tfrac{z_1^2}{x_2^2}\right) 
-\tfrac{1}{x_3^2z_1^2}\left(z_1^2-z_1+1 + \tfrac{3\sqrt{c_1}}{\sqrt{z_1+1}} z_1\right)^2\right) \label{BRF3ra-2}\\
&=\tfrac{1}{c_1}\left(\tfrac{(z_1+1)^2}{z_1^2}-x_3^2\right)\left(\tfrac{1}{x_1^2} + \tfrac{z_1^2(c_1-1)}{x_2^2}\right), \qquad\forall l=0,1,\dots,t. \notag
\end{align}
\end{lemma}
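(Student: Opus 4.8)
The plan is to evaluate the third BRF equation \eqref{BRF3ra} explicitly using the formulas already derived, namely the Ricci eigenvalue $r_l$ from Proposition \ref{ricggals2}(iv) and the symmetric form $(H_Q)_g^2(e^3_\gamma,e^3_\gamma)$ from Proposition \ref{HQ2-s2}, and then to substitute the normalizations recorded at the start of \S\ref{BRF-sec}. Since both sides of \eqref{BRF3ra} are already known in closed form, this lemma is essentially a bookkeeping computation: I would write $4x_3r_l$ and $(H_Q)_g^2(e^3_\gamma,e^3_\gamma)$ side by side, plug in $c_2=\tfrac{c_1}{c_1-1}$, $z_2=\tfrac{1}{c_1-1}$, $y_1=1$, $y_2=-\tfrac{1}{c_1-1}$, together with the resulting values $A_3=-z_1$, $B_3=\tfrac{z_1(z_1+1)}{c_1}$, $B_4=C_3=\tfrac{z_1+1}{c_1}$, and collect terms so that the identity takes the stated form.

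The natural organization is to split each side into its \emph{$\lambda_l$-dependent part} and its \emph{$\lambda_l$-independent part}, because $\lambda_l$ enters $r_l$ and $(H_Q)_g^2$ linearly (through $\kil_{\kg_l}$ and through the traces $\tr(\ad{X}|_{\pg_i})^2=-\tfrac{1-\lambda_lc_i}{c_i}\la Z,Z\ra$). First I would isolate the coefficient of $\lambda_l$ on both sides: on the $r_l$ side this comes from the first bracket in Proposition \ref{ricggals2}(iv), and on the $(H_Q)_g^2$ side it comes from the portions of the $\pg_i$-terms carrying $-c_i\lambda_l$ plus the entire $\pg_3$-term (which is proportional to $\lambda_l$). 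After substitution these should assemble into the expression $(z_1+1)^2 + \left(\tfrac{(z_1+1)^2}{z_1^2}-x_3^2\right)\left(\tfrac{1}{x_1^2}+\tfrac{z_1^2}{x_2^2}\right) - \tfrac{1}{x_3^2z_1^2}\left(z_1^2-z_1+1+\tfrac{3\sqrt{c_1}}{\sqrt{z_1+1}}z_1\right)^2$ appearing on the left of \eqref{BRF3ra-2}. Then I would collect the $\lambda_l$-free terms, which by the same substitutions should reduce to $\tfrac{1}{c_1}\left(\tfrac{(z_1+1)^2}{z_1^2}-x_3^2\right)\left(\tfrac{1}{x_1^2}+\tfrac{z_1^2(c_1-1)}{x_2^2}\right)$ on the right-hand side, after using $\tfrac{z_1^2}{c_2}=\tfrac{z_1^2(c_1-1)}{c_1}$.

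The genuinely delicate part of the calculation is the $\pg_3$-contribution, i.e.\ the $\tfrac{\lambda_l}{x_3^2B_3^3}\left(\tfrac{y_1}{c_1}+A_3^3\tfrac{y_2}{c_2}+\tfrac{3C_3}{B_4\sqrt{B_4}}(\tfrac{z_1}{c_1}+A_3^2\tfrac{z_2}{c_2})\right)^2$ term from Proposition \ref{HQ2-s2}, matched against the $r_l$-term $-\tfrac{1+A_3}{4B_3^2}(\tfrac{z_1}{c_1}+\tfrac{z_2}{c_2}A_3^3)\lambda_l$ from Proposition \ref{ricggals2}(iv). I expect the main obstacle to be simplifying the inner parenthesis $\tfrac{y_1}{c_1}+A_3^3\tfrac{y_2}{c_2}+\tfrac{3C_3}{B_4\sqrt{B_4}}(\tfrac{z_1}{c_1}+A_3^2\tfrac{z_2}{c_2})$ under the normalizations into a clean multiple of $z_1^2-z_1+1+\tfrac{3\sqrt{c_1}}{\sqrt{z_1+1}}z_1$; one has to track the half-integer powers of $B_3$ and $B_4$ carefully, since $B_3^3=\tfrac{z_1^3(z_1+1)^3}{c_1^3}$ and $B_4\sqrt{B_4}=\left(\tfrac{z_1+1}{c_1}\right)^{3/2}$, and verify that all the $c_1$-powers and $\sqrt{z_1+1}$-factors combine to produce exactly the $\tfrac{1}{x_3^2z_1^2}$ prefactor together with the $3\sqrt{c_1}/\sqrt{z_1+1}$ cross-term inside the square. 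Once this single algebraic identity is confirmed, multiplying \eqref{BRF3ra} through by $4x_3B_3^3/\lambda_l$ (for $l\geq 1$) or handling the $l=0$ case separately (where $\lambda_0=0$ kills the left side and the equation must hold as the pure $\lambda_l$-free identity) yields \eqref{BRF3ra-2}, completing the proof.
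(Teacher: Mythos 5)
Your plan is essentially the paper's own proof: substitute the normalized constants into Proposition \ref{ricggals2}(iv) and Proposition \ref{HQ2-s2} — where the delicate cubic term does collapse, via $1+z_1^3=(z_1+1)(z_1^2-z_1+1)$ and $\tfrac{3C_3}{B_4\sqrt{B_4}}=\tfrac{3\sqrt{c_1}}{\sqrt{z_1+1}}$, to $\tfrac{z_1+1}{c_1}\bigl(z_1^2-z_1+1+\tfrac{3\sqrt{c_1}}{\sqrt{z_1+1}}z_1\bigr)$ — and then use linearity in $\lambda_l$ to separate \eqref{BRF3ra} into the form $\lambda_l\cdot A=B$ with $A,B$ free of $\lambda_l$, exactly as described in your second paragraph. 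The only correction concerns your closing sentence: the passage to \eqref{BRF3ra-2} is a uniform rearrangement, valid for every $l$ including $l=0$, obtained by multiplying through by the positive constant $B_3$ and moving all $\lambda_l$-terms to one side; one neither divides by $\lambda_l$ nor treats $l=0$ separately — indeed ``multiplying by $4x_3B_3^3/\lambda_l$'' cannot produce \eqref{BRF3ra-2}, since that equation itself still contains $\lambda_l$.
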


\begin{proof}[Proof of Lemma \ref{lemma}]
By Proposition \ref{ricggals2}, (iv), 
\begin{align*}
r_l =& \tfrac{\lambda_l}{4x_3B_3}\left(\tfrac{2x_1^2-x_3^2}{x_1^2} 
+\tfrac{(2x_2^2-x_3^2)A_3^2}{x_2^2}  
-\tfrac{1+A_3}{B_3}\left(\tfrac{z_1}{c_1}+\tfrac{z_2}{c_2}A_3^3\right)\right) \\ 
&+\tfrac{1}{4x_3B_3}\left(2\left(\tfrac{1}{c_1}+\tfrac{1}{c_2}A_3^2\right) 
-\tfrac{2x_1^2-x_3^2}{x_1^2c_1} 
- \tfrac{(2x_2^2-x_3^2)A_3^2}{x_2^2c_2}\right) \\
=& \tfrac{\lambda_l}{4x_3B_3}\left(\tfrac{2x_1^2-x_3^2}{x_1^2} 
+\tfrac{(2x_2^2-x_3^2)z_1^2}{x_2^2}  
-\tfrac{c_1(1-z_1)}{z_1(z_1+1)}\left(\tfrac{z_1}{c_1}-\tfrac{1}{c_1}z_1^3\right)\right) \\ 
&+\tfrac{1}{4x_3B_3}\left(2\left(\tfrac{1}{c_1}+\tfrac{c_1-1}{c_1}z_1^2\right) 
-\tfrac{2x_1^2-x_3^2}{x_1^2c_1} 
- \tfrac{(2x_2^2-x_3^2)z_1^2(c_1-1)}{x_2^2c_1}\right) \\
=& \tfrac{\lambda_l}{4x_3B_3}\left(\tfrac{2x_1^2-x_3^2}{x_1^2} 
+\tfrac{(2x_2^2-x_3^2)z_1^2}{x_2^2}  
-\tfrac{1-z_1}{z_1(z_1+1)}\left(z_1-z_1^3\right)\right) \\ 
&+\tfrac{1}{4x_3B_3c_1}\left(2\left(1+(c_1-1)z_1^2\right) 
-\tfrac{2x_1^2-x_3^2}{x_1^2} 
- \tfrac{(2x_2^2-x_3^2)z_1^2(c_1-1)}{x_2^2}\right) \\
=& \tfrac{\lambda_l}{4x_3B_3}\left(2-\tfrac{x_3^2}{x_1^2} 
+2z_1^2-\tfrac{x_3^2z_1^2}{x_2^2}  
-(z_1^2-2z_1+1)\right) \\ 
&+\tfrac{1}{4x_3B_3c_1}\left(2\left(1+(c_1-1)z_1^2\right) 
-2+\tfrac{x_3^2}{x_1^2} 
-2z_1^2(c_1-1)+ \tfrac{x_3^2z_1^2(c_1-1)}{x_2^2}\right) \\
=& \tfrac{\lambda_l}{4x_3B_3}\left((z_1+1)^2-\tfrac{x_3^2}{x_1^2} -\tfrac{x_3^2z_1^2}{x_2^2}\right) 
+\tfrac{1}{4x_3B_3c_1}\left(\tfrac{x_3^2}{x_1^2} + \tfrac{x_3^2z_1^2(c_1-1)}{x_2^2}\right),
\end{align*}
and by Proposition \ref{HQ2-s2}, 
\begin{align*}
(H_Q)_{g}^2(e^3_\gamma,e^3_\gamma) 
= & \tfrac{1}{x_1^2B_3} \left(\tfrac{y_1}{z_1} + \tfrac{C_3}{B_{4}} \right)^2 \tfrac{1-c_1\lambda_l}{c_1} 
 +\tfrac{1}{x_2^2B_3} \left(\tfrac{y_2A_3}{z_2} + \tfrac{C_3}{B_{4}}\right)^2 \tfrac{1-c_2\lambda_l}{c_2} \\ 
& +\tfrac{\lambda_l}{x_3^2B_3^3}\left(\tfrac{y_1}{c_1}+A_3^3\tfrac{y_2}{c_2} + \tfrac{3C_3}{B_{4}\sqrt{B_4}} \left(\tfrac{z_1}{c_1}+A_3^2\tfrac{z_2}{c_2}\right)\right)^2 \\ 
= & \tfrac{1}{x_1^2B_3} \left(\tfrac{1}{z_1} + 1\right)^2 \tfrac{1-c_1\lambda_l}{c_1} 
 +\tfrac{1}{x_2^2B_3} \left(z_1 +1\right)^2 \tfrac{(1-\tfrac{c_1}{c_1-1}\lambda_l)(c_1-1)}{c_1} \\ 
& +\tfrac{\lambda_l}{x_3^2B_3^3}\left(\tfrac{1}{c_1}-z_1^3\tfrac{-1}{c_1} + \tfrac{3}{\sqrt{B_4}} \left(\tfrac{z_1}{c_1}+z_1^2\tfrac{1}{c_1}\right)\right)^2 \\ 
= & \tfrac{1}{x_1^2B_3c_1} \tfrac{(z_1+1)^2}{z_1^2} (1-c_1\lambda_l) 
 +\tfrac{1}{x_2^2B_3c_1} \left(z_1 +1\right)^2 (1-\tfrac{c_1}{c_1-1}\lambda_l)(c_1-1) \\ 
& +\tfrac{\lambda_l}{x_3^2B_3^3c_1^2}\left(1+z_1^3 + \tfrac{3}{\sqrt{B_4}} \left(z_1+z_1^2\right)\right)^2 \\ 
= & \tfrac{1}{x_1^2B_3c_1} \tfrac{(z_1+1)^2}{z_1^2} (1-c_1\lambda_l) 
 +\tfrac{1}{x_2^2B_3c_1} \left(z_1 +1\right)^2 (c_1-1-c_1\lambda_l) \\ 
& +\tfrac{\lambda_l}{x_3^2B_3^3c_1^2}(z_1+1)^2\left(z_1^2-z_1+1 + \tfrac{3\sqrt{c_1}}{\sqrt{z_1+1}} z_1\right)^2.
\end{align*}
Thus \eqref{BRF3ra} becomes 
\begin{align*}
& \lambda_l\left((z_1+1)^2-\tfrac{x_3^2}{x_1^2} -\tfrac{x_3^2z_1^2}{x_2^2}\right) 
+\tfrac{1}{c_1}\left(\tfrac{x_3^2}{x_1^2} + \tfrac{x_3^2z_1^2(c_1-1)}{x_2^2}\right) \\
=& \tfrac{1}{x_1^2c_1} \tfrac{(z_1+1)^2}{z_1^2} (1-c_1\lambda_l) 
 +\tfrac{1}{x_2^2c_1} \left(z_1 +1\right)^2 (c_1-1-c_1\lambda_l) \\ 
& +\tfrac{\lambda_l}{x_3^2B_3^2c_1^2}(z_1+1)^2\left(z_1^2-z_1+1 + \tfrac{3\sqrt{c_1}}{\sqrt{z_1+1}} z_1\right)^2,
\end{align*}
if and only if, 
\begin{align*}
& \lambda_l\left((z_1+1)^2-\tfrac{x_3^2}{x_1^2} -\tfrac{x_3^2z_1^2}{x_2^2}\right) 
+\tfrac{1}{c_1}\left(x_3^2-\tfrac{(z_1+1)^2}{z_1^2}\right)\left(\tfrac{1}{x_1^2} + \tfrac{z_1^2(c_1-1)}{x_2^2}\right) \\
= & -\lambda_l(z_1+1)^2\left(\tfrac{1}{x_1^2z_1^2}+\tfrac{1}{x_2^2}\right) 
+\tfrac{\lambda_l}{x_3^2z_1^2}\left(z_1^2-z_1+1 + \tfrac{3\sqrt{c_1}}{\sqrt{z_1+1}} z_1\right)^2, 
\end{align*}
if and only if, 
\begin{align*}
& \lambda_l\left((z_1+1)^2+\left(\tfrac{(z_1+1)^2}{z_1^2}-x_3^2\right)\left(\tfrac{1}{x_1^2} + \tfrac{z_1^2}{x_2^2}\right)\right) 
+\tfrac{1}{c_1}\left(x_3^2-\tfrac{(z_1+1)^2}{z_1^2}\right)\left(\tfrac{1}{x_1^2} + \tfrac{z_1^2(c_1-1)}{x_2^2}\right) \\
= & \tfrac{\lambda_l}{x_3^2z_1^2}\left(z_1^2-z_1+1 + \tfrac{3\sqrt{c_1}}{\sqrt{z_1+1}} z_1\right)^2,
\end{align*}
concluding the proof.  
\end{proof}

Part (i) now follows by just checking that if $z_1=c_1-1$, then $x_1=\frac{1}{z_1}$, $x_2=1$ and $x_3=\frac{z_1+1}{z_1}$ solve equations \eqref{BRF1ra},  \eqref{BRF2da} and  \eqref{BRF3ra-2}.  It is also easy to see that if $\kg$ is abelian (i.e., only $\lambda_0=0$ appears), then the above solve the three equations for any $z_1>0$.   

If $\zg(\kg)=\kg_0\ne 0$, i.e., $\pg_3^0\ne 0$, then $x_3=\frac{z_1+1}{z_1}$ by \eqref{BRF3ra-2}, which implies that $x_1=\frac{1}{z_1}$ and $x_2=1$ by using \eqref{BRF1ra} and \eqref{BRF2da}, respectively, and so part (ii) follows.  

Under the hypothesis of part (iii), in addition to $x_1=\frac{1}{z_1}$ and $x_2=1$, we obtain from \eqref{BRF1ra} that 
$$
1-\tfrac{x_{3}}{x_1(z_1+1)} =\tfrac{x_1(z_1+1)}{x_{3}}-1,
$$
so $x_3=x_1(z_1+1)=\frac{z_1+1}{z_1}$, concluding the proof of part (iii) and the theorem.   
\end{proof}

The following is the simplest example provided by Theorem \ref{BRFggs2} and was recently given in \cite{PdsRff1}. 

\begin{example}\label{PR1-exa}
Consider $G_1=G_2=\SU(2)$ and the $1$-dimensional subgroup $K=S^1_{p,q}\subset G$ ($p\geq q$)  with the usual embedding as in \cite{PdsRff1}, which gives $c_1=\tfrac{p^2+q^2}{p^2}$.  The metric $g$ found in \cite[between (3.2) and (3.3)]{PdsRff1} for $p>q$ such that $(g,H)$ is BRF for some multiple $H$ of $H_0$ is given by 
$$
g=\left(\tfrac{q^2}{p^2+q^2},\tfrac{p^2}{p^2+q^2},1\right)_{\gk},  
$$ 
which is a multiple of the metric $g_0$ provided by Theorem \ref{BRFggs2} when $z_1=z_2=\tfrac{1}{c_1-1}$; indeed, $g_b=\tfrac{1}{c_1-1}\gk$ and   
$$
g_0=\left(c_1-1,1,c_1\right)_{g_b} = \tfrac{1}{c_1-1}\left(c_1-1,1,c_1\right)_{\gk} 
= \tfrac{p^2}{q^2}\left(\tfrac{q^2}{p^2},1,\tfrac{p^2+q^2}{p^2}\right)_{\gk}
= \tfrac{p^2+q^2}{q^2}g.    
$$ 
In the case when $p=q=1$, we have that $c_1=2$, $g_b=\gk$ and $g=\left(1,1,2\right)_{\gk}=g_0$ (see \cite[pp.11]{PdsRff1}).  
\end{example}  

We now prove the existence of a one-parameter family of BRF generalized metrics on each of the homogeneous spaces not covered by Theorem \ref{BRFggs2}, (ii) and (iii).  

\begin{theorem}\label{BRFggs2-kss}
Let $M=G/K$ be an aligned homogeneous space with $s=2$ such that Assumption \ref{assum} holds, $\kg$ is semisimple and $\cas_{\chi_{1}}=\kappa_1I_{\pg_1}$, $\cas_{\chi_{2}}=\kappa_2I_{\pg_2}$ for some $\kappa_1,\kappa_2\in\RR$ (see \eqref{Cchi}).   Consider a generalized metric $(g,H(z_1))$, where $H(z_1):=H_{Q_0}$, 
$$
g=(x_1,x_2,x_3)_{g_b}, \qquad g_b=z_1(-\kil_{\ggo_1})+ \tfrac{1}{c_1-1}(-\kil_{\ggo_2}), \qquad Q_0=\kil_{\ggo_1}-\tfrac{1}{c_1-1}\kil_{\ggo_2},
$$ 
and assume that $z_1\ne c_1-1$.
\begin{enumerate}[{\rm (i)}]
\item $(g,H(z_1))$ is Bismut Ricci flat if and only if 
\begin{align}
x_1=& \tfrac{1}{2 \kappa_1+1} \left( \kappa_1\left(\tfrac{x_3}{z_1+1} +\tfrac{z_1+1}{x_3 z_1^2}\right) + \sqrt{ \kappa_1^2\left(\tfrac{x_3}{z_1+1} +\tfrac{z_1+1}{x_3 z_1^2} \right)^2 + \tfrac{1-4\kappa_1^2}{z_1^2} } \right), \label{x1} \\ 
x_2=& \tfrac{z_1}{2 \kappa_2+1} \left( \kappa_2\left(\tfrac{x_3}{z_1+1} +\tfrac{z_1+1}{x_3 z_1^2}\right) + \sqrt{ \kappa_2^2\left(\tfrac{x_3}{z_1+1} +\tfrac{z_1+1}{x_3 z_1^2} \right)^2 + \tfrac{1-4\kappa_2^2}{z_1^2} } \right), \label{x2} 
\end{align}
$\lambda_1=\dots=\lambda_t=\lambda$, and 
\begin{equation}\label{BRF3ra-3}
px_3^4 
+ \left(\lambda(z_1+1)^2
-\tfrac{(z_1+1)^2}{z_1^2}p\right)x_3^2 
-\tfrac{\lambda}{z_1^2}\left(z_1^2-z_1+1 + \tfrac{3\sqrt{c_1}}{\sqrt{z_1+1}} z_1\right)^2 
= 0,
\end{equation}
where
$$
p:=\left(\tfrac{1}{c_1}-\lambda\right)\tfrac{1}{x_1^2} + \left(\tfrac{1}{c_2}-\lambda\right)\tfrac{z_1^2}{x_2^2}.
$$
\item If $\lambda_1=\dots=\lambda_t$ (e.g., $\kg$ simple), then for each $z_1>0$ there exists a generalized metric $(g(z_1),H(z_1))$ as in part (i) which is Bismut Ricci flat.  
\end{enumerate}
\end{theorem}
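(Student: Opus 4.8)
The plan is to start from the three scalar equations to which the BRF condition was already reduced in the proof of Theorem \ref{BRFggs2}: equations \eqref{BRF1ra}, \eqref{BRF2da} and the family \eqref{BRF3ra-2} indexed by $l$. The new input here is that $\cas_{\chi_1}$ and $\cas_{\chi_2}$ are scalar. Substituting $\cas_{\chi_k}=\kappa_k I_{\pg_k}$ collapses \eqref{BRF1ra} and \eqref{BRF2da} into genuine scalar equations; clearing denominators (legitimate since $x_k,x_3>0$) turns each into a quadratic, e.g. for $k=1$, $(1+2\kappa_1)z_1^2x_1^2-2\kappa_1\big(\tfrac{x_3z_1^2}{z_1+1}+\tfrac{z_1+1}{x_3}\big)x_1-(1-2\kappa_1)=0$, and analogously for $x_2$. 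I would solve by the quadratic formula and keep the positive root; factoring $z_1$ out of the discriminant reproduces exactly \eqref{x1} and \eqref{x2}. Writing $T:=\tfrac{x_3}{z_1+1}+\tfrac{z_1+1}{x_3z_1^2}\ge\tfrac{2}{z_1}$ by AM--GM, one checks the radicand is $\ge0$ and the displayed root is positive for every $x_3>0$; when $\kappa_k\le\tfrac12$ it is the unique positive root, while for $\kappa_k>\tfrac12$ one must separately verify that the smaller positive root does not yield an admissible solution. This expresses $x_1,x_2$ as explicit positive functions of $x_3$.

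For part (i) it remains to analyze \eqref{BRF3ra-2}, which I would write as $\lambda_l A=B$ with $A$ the bracket on the left and $B$ the right-hand side, \emph{both independent of} $l$. The crucial point is that $B=\tfrac1{c_1}\big(\tfrac{(z_1+1)^2}{z_1^2}-x_3^2\big)\big(\tfrac1{x_1^2}+\tfrac{z_1^2(c_1-1)}{x_2^2}\big)$ has a strictly positive second factor, so $B=0$ forces $x_3=\tfrac{z_1+1}{z_1}$; feeding this into \eqref{x1}--\eqref{x2} gives $x_1=\tfrac1{z_1}$, $x_2=1$, and a direct computation then yields $A=\big((z_1+1)^4-E^2\big)/(z_1+1)^2$, where $E:=z_1^2-z_1+1+\tfrac{3\sqrt{c_1}}{\sqrt{z_1+1}}z_1$, which vanishes precisely when $z_1=c_1-1$. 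Since $z_1\ne c_1-1$ by hypothesis, $A$ and $B$ cannot vanish together. Hence, if two of the $\lambda_l$ were distinct, subtracting their equations would give $A=0$ and then $B=0$, a contradiction; therefore $\lambda_1=\dots=\lambda_t=:\lambda$. The surviving equation $\lambda A=B$ is then rearranged, exactly as in the proof of Lemma \ref{lemma} and grouping the terms proportional to $\tfrac{(z_1+1)^2}{z_1^2}-x_3^2$ into $p$, into \eqref{BRF3ra-3}. Conversely, \eqref{x1}, \eqref{x2}, the equality of the $\lambda_l$, and \eqref{BRF3ra-3} clearly give back all three BRF equations, proving the equivalence.

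For part (ii), with $\lambda_1=\dots=\lambda_t=\lambda$ fixed, the task reduces to producing a root $x_3>0$ of \eqref{BRF3ra-3}, with $x_1(x_3),x_2(x_3)$ given by \eqref{x1}--\eqref{x2}; the remaining coordinates are then automatically positive. I would view the left-hand side of \eqref{BRF3ra-3} as a continuous function $F$ on $(0,\infty)$ and invoke the intermediate value theorem. As $x_3\to0^+$ the term $\tfrac{z_1+1}{x_3z_1^2}$ dominates in \eqref{x1}--\eqref{x2}, forcing $x_1,x_2\to\infty$ and hence $p\to0$, so that $F(x_3)\to-\tfrac{\lambda}{z_1^2}E^2<0$. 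For the behaviour at infinity the structural input is alignment: each $c_{ij}\le1$ (Killing form of a subalgebra) together with $c_{ij}=\lambda_j c_i$ gives $\lambda\le\tfrac1{c_2}\le\tfrac1{c_1}$, so both $\tfrac1{c_1}-\lambda$ and $\tfrac1{c_2}-\lambda$ are $\ge0$ and $p\ge0$; as $x_3\to\infty$ one has $x_1,x_2\sim x_3$, $p\sim D/x_3^2$ with $D\ge0$, and the leading term of $F$ is $(D+\lambda(z_1+1)^2)x_3^2\to+\infty$. Thus $F$ changes sign and a root exists. The one degenerate case $p\equiv0$ occurs only when $c_1=c_2=2$ and $\lambda=\tfrac12$, and there \eqref{BRF3ra-3} is linear in $x_3^2$ with the explicit positive solution $x_3=E/\big(z_1(z_1+1)\big)$.

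The main obstacle is precisely this endpoint analysis for part (ii): because $p$ depends on $x_3$ through the radicals in \eqref{x1}--\eqref{x2}, equation \eqref{BRF3ra-3} is not a bona fide quartic, and the signs of $F(0^+)$ and $F(\infty)$ must be read off from the asymptotics of those radicals, with the nonnegativity of $\tfrac1{c_1}-\lambda$ and $\tfrac1{c_2}-\lambda$ furnished by alignment being the essential ingredient preventing the leading coefficient from turning negative. A useful refinement, which also feeds into Theorem \ref{main2}(ii), is to evaluate $F$ at the distinguished value $x_3=\tfrac{z_1+1}{z_1}$ attached to $g_0$: there $F=\tfrac{\lambda}{z_1^2}\big((z_1+1)^4-E^2\big)$, whose sign equals that of $z_1-(c_1-1)$, so the constructed root lies on one side or the other of $\tfrac{z_1+1}{z_1}$ according as $z_1\lessgtr c_1-1$, which both recovers $g_0$ at $z_1=c_1-1$ and localizes the new solutions.
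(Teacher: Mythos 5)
Your proposal is correct and follows essentially the same route as the paper's proof: reduce to equations \eqref{BRF1ra}, \eqref{BRF2da}, \eqref{BRF3ra-2}, solve the two quadratics keeping the positive roots to get \eqref{x1}--\eqref{x2}, rule out the vanishing of the factor multiplying $\lambda_l$ via $z_1\ne c_1-1$ to force $\lambda_1=\dots=\lambda_t$, rearrange into \eqref{BRF3ra-3}, and prove part (ii) by the intermediate value theorem using exactly the same asymptotics ($x_1,x_2\to\infty$, $p\to0$ as $x_3\to0$, and $x_3/x_i$ bounded as $x_3\to\infty$). Your additional details (the AM--GM bound on the radicand, $p\ge0$ from \eqref{cij}, and the sign of $F$ at $x_3=\tfrac{z_1+1}{z_1}$) are sound refinements of the paper's terser argument, and your worry about $\kappa_k>\tfrac12$ is vacuous since $0<\kappa_i\le\tfrac12$ always holds by \eqref{MgBi}.
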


\begin{remark}\label{lowdim2}
The lowest dimensional (irreducible) example where the theorem can be applied is $M^{13}=\SU(3)\times\SU(3)/\SO(3)$.  
\end{remark}

\begin{remark}
It follows from \eqref{cij} that $p\geq 0$, where equality holds if and only if $M$ is the symmetric space $M=G\times G/\Delta G$.  
\end{remark}

\begin{remark}\label{gz0}
For $z_1=c_1-1$, it is easy to check that $x_3=\frac{c_1}{c_1-1}$ is the unique positive solution to \eqref{BRF3ra-3}, giving rise to the metric $g_0$ provided by Theorem \ref{BRFggs2}.  Indeed, it is straightforward to check that the equation is equivalent to 
$$
((c_1-1)^2 x_3^2 - c_1^2)  \left(\tfrac{(c_1-1)(1- \lambda c_2)}{x_2^2} + \tfrac{ 1- \lambda c_1}{x_1^2 (c_1-1)^2)} + \tfrac{ \lambda c_1^3}{(c_1-1)^2 x_3^2}\right),
$$
and the factor on the right is always positive by \eqref{cij}.  
\end{remark}

\begin{remark}\label{F}
If we view the expression in \eqref{BRF3ra-3} as a two variable function $F(x_3,z_1)$, then $F$ is clearly differentiable at any point $(x_3,z_1)$ with $x_3,z_1>0$.  The above remark therefore implies that as soon as $\tfrac{\partial}{\partial x_3} F\left(\frac{c_1}{c_1-1},c_1-1\right)\ne 0$, we obtain by the Implicit Function Theorem that there exists $\epsilon>0$ such that for any $z_1\in(c_1-1-\epsilon,c_1-1+\epsilon)$, the solution $x_3$ to \eqref{BRF3ra-3} is unique and it is given by a function $x_3(z_1)$ which is differentiable on $z_1$.  This gives the differentiability of the curve $(g(z_1),H(z_1))$ around $(g_0,H_0)$.  Moreover,   
\begin{equation}\label{x3p}
x_3'(c_1-1) = -\tfrac{\tfrac{\partial}{\partial z_1} F\left(\frac{c_1}{c_1-1},c_1-1\right)}{\tfrac{\partial}{\partial x_3} F\left(\frac{c_1}{c_1-1},c_1-1\right)}.  
\end{equation}
\end{remark}

\begin{remark}\label{rem2}
According to \eqref{MgBi}, $\cas_{\chi_i}=\kappa_i I_{\pg_i}$ for some $\kappa_i\in\RR$ if and only if the standard metric $\gk^i$ on the homogeneous space $M_i=G_i/\pi_i(K)$ is Einstein with $\Ricci(\gk^i)=\rho_iI_{\pg_i}$, where $\kappa_i=2\rho_i-\unm$.  In that case, $0<\kappa_i\leq \unm$, and $\kappa_i=\unm$ if and only if $G_i/\pi_i(K)$ is an irreducible symmetric space (i.e., $[\pg_i,\pg_i]\subset\pi_i(\kg)$).  Homogeneous spaces $G/K$ with $G$ simple such that the standard metric is Einstein were classified by Wang and Ziller in \cite{WngZll2}: beyond isotropy irreducible spaces, with $G$ classical, there are $10$ infinite families parametrized by the natural numbers, $2$ conceptual constructions and $2$ isolated examples, and with $G$ exceptional, $20$ isolated examples.  The isotropy $K$ is semisimple for most of them, see the tables in \cite{stab} for further information.  
\end{remark}

\begin{remark}\label{rem7}
Assume that $G_1/\pi_1(K)$ and $G_2/\pi_2(K)$ are both isotropy irreducible spaces, their isotropy representations are inequivalent and $\kg$ is simple, so $M=G/K$ is multiplicity free and for any fixed $g_b$, any $G$-invariant metric on $M=G/K$ is of the form $g=(x_1,x_2,x_3)_{g_b}$.  If we request the background metric to be fixed, say the multiple of the standard metric $g_b=\tfrac{1}{c_1-1}\gk$ (i.e., $z_1=\tfrac{1}{c_1-1}$), then each member $g(z_1)$ of the above family of metrics will be isometric to a metric of the form 
$$
g(\left(\overline{x}_1(z_1),\overline{x}_2(z_1),\overline{x}_3(z_1)\right)_{\tfrac{1}{c_1-1}\gk}, \qquad \overline{x}_i(z_1)>0,
$$
and each closed $3$-form $H(z_1)$ will pullback via the above isometry to a $3$-form of the form 
$$
\overline{H}(z_1)=H\left(\tfrac{1}{c_1-1}\right)+d\alpha(z_1), 
$$
for some $G$-invariant $2$-form $\alpha(z_1)$.  The price to pay in order to fix the reductive decomposition is therefore that the $3$-form $\overline{H}(z_1)$ is not one of the canonical ones defined by \eqref{HQ-def}.  This may turn the computations even heavier.  
\end{remark}

\begin{proof}[Proof of Theorem \ref{BRFggs2-kss}]
We first prove part (i).  Equations \eqref{BRF1ra} and \eqref{BRF2da} are equivalent to quadratic equations for $x_1$ and $x_2$, respectively, and it is easy to see that \eqref{x1} and \eqref{x2} are respectively the only positive solutions.  On the other hand, the factor multiplying $\lambda_l$ in \eqref{BRF3ra-2} is nonzero since otherwise $x_3=\tfrac{z_1+1}{z_1}$ and so $z_1=c_1-1$ by \eqref{x1}, \eqref{x2} and \eqref{BRF3ra-2}.  Thus $\lambda_1=\dots=\lambda_t=\lambda$ for some $\lambda$, and it is straightforward to show that equation \eqref{BRF3ra-2} is equivalent to 
\eqref{BRF3ra-3}, concluding the proof of (i).  

In order to prove part (ii), we fix $z_1>0$ and view the expression in \eqref{BRF3ra-3} as a function of $x_3$, say $f(x_3)$.  Using that $x_1$ and $x_2$ converges to $\infty$ and so $p\to 0$, as $x_3\to 0$, one obtains that  $f(x_3)$ converges to a negative number as $x_3\to 0$.  Furthermore, since $\tfrac{x_3}{x_1}$ and $\tfrac{x_3}{x_2}$ both converge to a number as $x_3\to\infty$, it follows that $f(x_3)$ converges to $\infty$ as $x_3\to\infty$.  This implies that there exists a positive solution $x_3$ to \eqref{BRF3ra-3} for any $z_1>0$, concluding the proof.  
\end{proof}

\begin{example}\label{PR2-exa}
The examples of BRF structures given in \cite{PdsRff2} are contained in a class which can be described as follows in terms of the notation in Theorem \ref{BRFggs2} and Theorem \ref{BRFggs2-kss}.  Given an irreducible symmetric space $G/K$, one considers the homogeneous space $M=G\times G/\Delta K$, so $G_1=G_2=G$, $c_1=2$, $\cas_{\chi_{1}}=\kappa_1I_{\pg_1}$ and $\cas_{\chi_{2}}=\kappa_2I_{\pg_2}$, where $\kappa_1=\kappa_2=\unm$.   We therefore obtain that  
$$
x_1= \tfrac{1}{2} \left(\tfrac{x_3}{z_1+1} +\tfrac{z_1+1}{x_3 z_1^2}\right)
= \tfrac{x_3^2z_1^2+(z_1+1)^2}{2(z_1+1)x_3 z_1^2},
\qquad x_2=z_1x_1, \qquad p=\tfrac{1-2\lambda}{x_1^2},
$$
and so condition \eqref{BRF3ra-3} is equivalent to the following cubic equation for $x_3^2$:
\begin{align}
& (1-2\lambda)4(z_1+1)^2 z_1^4 x_3^6 \notag \\
&+ \lambda(z_1+1)^2(x_3^2z_1^2+(z_1+1)^2)^2x_3^2  
-(z_1+1)^4(1-2\lambda)4 z_1^2x_3^4  \label{BRF-3ra-sym}\\
&-\tfrac{\lambda}{z_1^2}\left(z_1^2-z_1+1 + \tfrac{3\sqrt{2}}{\sqrt{z_1+1}} z_1\right)^2(x_3^2z_1^2+(z_1+1)^2)^2 
= 0, \notag
\end{align}
where $\lambda=\tfrac{2\dim{\kg}-\dim{\pg_1}}{4\dim{\kg}}<\unm$ by \cite[Theorem 11, pp.35]{DtrZll}.  We observe that even in this relatively simple case it is hopeless to obtain a manageable formula for the solution $x_3$ in terms of $z_1$.  Note that for $z_1=1$, the background metric $g_b$ of the BRF metric $g_0$ given by Theorem \ref{BRFggs2} is the standard metric $\gk$ and $g_0=(1,1,2)_{\gk}$.  We do not know if the BRF generalized metric found in \cite{PdsRff2} is isometric to some BRF $(g(z_1),H(z_1))$ as in the above proposition up to scaling, the methods used in \cite{PdsRff2} are very different from the approach considered in this paper.  For $5$ of the $16$ symmetric pairs $(G,K)$ with $\rank G=\rank K$ given in \cite[Table 1]{PdsRff2}, $\kg$ has a nontrivial center, so the uniqueness in Theorem \ref{BRFggs2}, (ii) holds.  It is shown in Theorem \ref{BRFcurve} below that $(g(z_1),H(z_1))$ is pairwise non-homothetic around $(g_0,H_0)$.  
\end{example}

\subsection{Families of pairwise non-homothetic BRF metrics}\label{curve-sec} 
The curve of BRF generalized metrics provided by Theorem \ref{BRFggs2}, (i) when $\kg$ is abelian has constant Ricci eigenvalues $\unc$, $\frac{c_1-1}{4}$ and $\frac{c_1}{4}$ (see Remark \ref{rem1}), so the family would most likely be pairwise isometric.  We give in the following proposition the Ricci curvature of the metrics provided by Theorem \ref{BRFggs2-kss}, with the aim of finding a {\it pairwise non-homothetic} curve $(g(t),H(t))$, $t\in (a,b)$ of BRF generalized metrics on a given homogeneous space $M=G/K$, i.e., $(g(t_1),H(t_1))$ is isometric to $(g(t_2),H(t_2))$ (up to scaling) for $t_1,t_2\in(a,b)$ if and only if $t_1=t_2$.

\begin{proposition}\label{BRFggs2-kss-ric}
If $g(z_1)$ is the metric given in Theorem \ref{BRFggs2-kss}, then for $i=1,2,3$, $\Ricci(g(z_1))|_{\pg_i}=r_i(z_1)I_{\pg_i}$, where 
$$
r_1(z_1) 
=  \tfrac{1}{4x_1z_1}\left(2\kappa_1+1 - \tfrac{2x_{3}}{x_1(z_1+1)}\kappa_1\right),  \qquad
r_2(z_1) 
=  \tfrac{c_1-1}{4x_2}\left(2\kappa_2+1 - \tfrac{2x_{3}z_1}{x_2(z_1+1)}\kappa_2\right), 
$$
and
$$
r_3(z_1) 
= \tfrac{c_1}{4x_3z_1(z_1+1)}\left(\lambda(z_1+1)^2+\left(\tfrac{1}{c_1}-\lambda\right)\tfrac{x_3^2}{x_1^2} + \left(\tfrac{1}{c_2}-\lambda\right)\tfrac{x_3^2z_1^2}{x_2^2}\right). 
$$
\end{proposition}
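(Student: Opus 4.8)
The plan is to obtain all three eigenvalues by direct substitution into Proposition \ref{ricggals2}, exploiting two simplifications now available: both Casimir operators are scalar, $\cas_{\chi_1}=\kappa_1 I_{\pg_1}$ and $\cas_{\chi_2}=\kappa_2 I_{\pg_2}$, and the abstract constants $A_3,B_3,c_2,z_2$ have the explicit normalized values recorded at the start of \S\ref{BRF-sec} (in particular $A_3=-z_1$, $c_1B_3=z_1(z_1+1)$, $z_2=\tfrac{1}{c_1-1}$, $c_2=\tfrac{c_1}{c_1-1}$). Since each Casimir is a multiple of the identity, the two terms in parts (i) and (ii) of Proposition \ref{ricggals2} are both multiples of $I_{\pg_k}$, so $\Ricci(g)|_{\pg_k}$ is automatically scalar; the work is purely bookkeeping the constants.

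For $\pg_1$ I would feed $\cas_{\chi_1}=\kappa_1 I_{\pg_1}$ and $c_1B_3=z_1(z_1+1)$ into part (i), obtaining
\[
r_1(z_1)=\tfrac{1}{4x_1z_1}+\tfrac{\kappa_1}{2x_1}\left(\tfrac{1}{z_1}-\tfrac{x_3}{x_1z_1(z_1+1)}\right),
\]
and then collecting the common factor $\tfrac{1}{4x_1z_1}$ gives the stated $r_1$. The computation for $\pg_2$ is identical in spirit: using part (ii) with $z_2=\tfrac{1}{c_1-1}$, $A_3^2=z_1^2$ and $c_2B_3=\tfrac{z_1(z_1+1)}{c_1-1}$, the coefficient of $\cas_{\chi_2}=\kappa_2 I_{\pg_2}$ simplifies so that the common factor becomes $\tfrac{c_1-1}{4x_2}$, yielding the claimed $r_2(z_1)$.

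The only slightly more involved piece is $\pg_3$. Here part (iii) of Proposition \ref{ricggals2} already guarantees that the decomposition is $\ricci(g)$-orthogonal, and part (iv) gives $\Ricci(g)|_{\pg_3^l}=r_l I_{\pg_3^l}$. Since $\kg$ is semisimple there is no center, so $\kg_0=0$ and the index $l$ ranges over $1,\dots,t$; the hypothesis $\lambda_1=\dots=\lambda_t=\lambda$ then forces all the $r_l$ to coincide, so $\Ricci(g)|_{\pg_3}$ is indeed a single scalar $r_3$. For its value I would not recompute from scratch but reuse the already-simplified form of $r_l$ obtained midway through the proof of Lemma \ref{lemma}, namely
\[
r_l=\tfrac{\lambda}{4x_3B_3}\left((z_1+1)^2-\tfrac{x_3^2}{x_1^2}-\tfrac{x_3^2z_1^2}{x_2^2}\right)+\tfrac{1}{4x_3B_3c_1}\left(\tfrac{x_3^2}{x_1^2}+\tfrac{x_3^2z_1^2(c_1-1)}{x_2^2}\right),
\]
then substitute $\tfrac{1}{4x_3B_3}=\tfrac{c_1}{4x_3z_1(z_1+1)}$ and regroup the coefficients of $\tfrac{x_3^2}{x_1^2}$ and $\tfrac{x_3^2z_1^2}{x_2^2}$.

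The single identity that makes the final expression match is that the coefficient of $\tfrac{x_3^2z_1^2}{x_2^2}$ collapses to $\tfrac{c_1-1}{c_1}-\lambda$, and recognizing $\tfrac{c_1-1}{c_1}=\tfrac{1}{c_2}$ (from $c_2=\tfrac{c_1}{c_1-1}$) turns it into $\tfrac{1}{c_2}-\lambda$, exactly as in the stated $r_3(z_1)$; the coefficient of $\tfrac{x_3^2}{x_1^2}$ is plainly $\tfrac{1}{c_1}-\lambda$. I do not expect a genuine obstacle: this proposition is a bookkeeping corollary of Proposition \ref{ricggals2}, and the only point requiring care is consistently tracking the normalized constants and the $\tfrac{1}{c_1},\tfrac{1}{c_2}$ identity in the $\pg_3$ coefficients.
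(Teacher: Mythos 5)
Your proposal is correct and follows exactly the route of the paper, whose proof is the one-line remark that the result ``follows from Proposition \ref{ricggals2} and the proof of Lemma \ref{lemma}'': you substitute the scalar Casimirs and normalized constants into Proposition \ref{ricggals2}, (i), (ii) for $r_1,r_2$, and reuse the simplified expression for $r_l$ from the proof of Lemma \ref{lemma} together with $\lambda_1=\dots=\lambda_t=\lambda$ (and $\kg_0=0$) for $r_3$. Your bookkeeping, including the identity $\tfrac{c_1-1}{c_1}=\tfrac{1}{c_2}$, checks out.
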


\begin{proof}
It follows from Proposition \ref{ricggals2} and the proof of Lemma \ref{lemma}.  
\end{proof}

Recall from Remark \ref{rem1} that for the metric $g_0=g(c_1-1)$ provided by Theorem \ref{BRFggs2} (see Remark \ref{gz0}), we have that
$$
r_1(c_1-1)=\unc, \qquad r_2(c_1-1)=\tfrac{c_1-1}{4}, \qquad r_3(c_1-1) = \tfrac{c_1}{4}\left(1-\lambda\right),  
$$
which coincide with the formulas given in Proposition \ref{BRFggs2-kss-ric} when $z_1=c_1-1$.  

Assume that the dimensions of $\pg_1,\pg_2,\pg_3$ are pairwise different.  If the metrics $g(z_1)$ and $g(z_1')$ are {\it homothetic}, i.e., isometric up to scaling, then $r_{12}(z_1)=r_{12}(z_1')$ and $r_{13}(z_1)=r_{13}(z_1')$, where 
$$
r_{12}(z_1):=\tfrac{r_1(z_1)}{r_2(z_1)}, \qquad  r_{13}(z_1):=\tfrac{r_1(z_1)}{r_3(z_1)}.  
$$
We first note that if $\kappa_1=\kappa_2$, then $x_2=z_1x_1$ by \eqref{x1} and \eqref{x2} and hence $r_{12}(z_1)=\tfrac{1}{c_1-1}$ for all $z_1>0$.  Thus the function $r_{13}$ must be used to find pairwise non-homothetic curves, and in this case $\dim{\pg_1}=\dim{\pg_2}\ne \dim{\kg}$ also works.  According to Remark \ref{F}, this will happen with $c_1-1\in (a,b)$ as soon as $r_{13}'(c_1-1)\ne 0$, that is, $g_0$ belongs to a pairwise non-homothetic curve of BRF metrics.  Unfortunately, the lack of a formula for $x_3(z_1)$ makes very difficult to compute or even estimate $r_{13}'(c_1-1)$ in a unified way.      

We now show that the curve is pairwise non-homothetic around $(g_0,H_0)$ for a huge class of homogeneous spaces.  

\begin{theorem}\label{BRFcurve}
Let $G/K$ be a compact homogeneous space such that $G$ and $K$ are both simple and $\cas_{\chi}=\kappa I_{\pg}$ for some $\kappa\in\RR$ (i.e., $\gk$ is Einstein on $G/K$).  Then there exists $\epsilon>0$ such that the curve of Bismut Ricci flat generalized metrics $(g(z_1),H(z_1))$, $z_1\in(1-\epsilon,1+\epsilon)$ on $M=G\times G/\Delta K$, as in Theorem \ref{BRFggs2-kss}, is pairwise non-homothetic .  
\end{theorem}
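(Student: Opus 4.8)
The plan is to reduce the statement to a single derivative non-vanishing and then compute it. First I would record the special features of $M=G\times G/\Delta K$ in the notation of Theorem \ref{BRFggs2-kss}: since the two simple factors coincide and $K$ is diagonally embedded one has $c_1=c_2=2$ and, because $M_1=M_2=G/K$ carries the same Einstein standard metric, $\cas_{\chi_1}=\cas_{\chi_2}=\kappa I$ with a single $\kappa\in(0,\tfrac12]$; as $\kg$ is simple there is a single constant $\lambda=\lambda_1$. Existence of the curve $(g(z_1),H(z_1))$ for $z_1$ near $1=c_1-1$ is Theorem \ref{BRFggs2-kss}, and its differentiability at $z_1=1$ follows from Remark \ref{F} once one checks $\tfrac{\partial}{\partial x_3}F(\tfrac{c_1}{c_1-1},c_1-1)\ne0$; this is immediate from the factorization in Remark \ref{gz0}, which exhibits $x_3=\tfrac{c_1}{c_1-1}=2$ as a simple root of \eqref{BRF3ra-3} at $z_1=1$. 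Because $\kappa_1=\kappa_2$, equations \eqref{x1} and \eqref{x2} give $x_2=z_1x_1$, whence $r_1\equiv r_2$ along the whole family; thus $g(z_1)$ has only the two distinct Ricci eigenvalues $r_1=r_2$ and $r_3$, and the natural homothety invariant is the ratio $r_{13}(z_1)=r_1(z_1)/r_3(z_1)$.

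Next I would turn ``homothetic'' into ``equal ratio''. Ricci eigenvalues with multiplicities are isometry invariants and a homothety rescales all of them by one constant, so if $g(z_1)$ and $g(z_1')$ are homothetic then the unordered pair of distinct normalized Ricci eigenvalues must agree; concretely $r_{13}(z_1)=r_{13}(z_1')$ unless the two multiplicities $2\dim\pg_1$ and $\dim\kg$ coincide, in which case one must also exclude the inverted possibility $r_{13}(z_1)=r_{13}(z_1')^{-1}$. Here Remark \ref{rem1} is decisive: since $K\subsetneq G$ the metric $g_0=g(1)$ is not Einstein, equivalently $\lambda<\tfrac12$ and $r_{13}(1)=\tfrac{1}{2(1-\lambda)}\ne1$. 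By continuity $r_{13}(z_1)$ stays on one side of $1$ for $z_1$ near $1$, so the inverted case cannot occur, and in all cases homothety of nearby members forces $r_{13}(z_1)=r_{13}(z_1')$. It therefore suffices to prove that $r_{13}$ is \emph{locally injective} at $z_1=1$, for which $r_{13}'(1)\ne0$ is enough.

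It then remains to compute $r_{13}'(1)$. At the base point $x_1(1)=x_2(1)=1$, $x_3(1)=2$, and $r_1(1)=\tfrac14$, $r_3(1)=\tfrac{1-\lambda}{2}$, in agreement with Remark \ref{rem1}. Viewing the three coordinates as functions of $z_1$, I would obtain $x_3'(1)$ from the Implicit Function Theorem applied to \eqref{BRF3ra-3} (formula \eqref{x3p}), $x_1'(1)$ by differentiating \eqref{x1} (equivalently the quadratic \eqref{BRF1ra}), and $x_2'(1)=x_1(1)+x_1'(1)$ from $x_2=z_1x_1$. Substituting these into the formulas of Proposition \ref{BRFggs2-kss-ric} and differentiating the quotient $r_1/r_3$ then yields $r_{13}'(1)$ as an explicit rational expression in $\kappa$ and $\lambda$; the constant $\tfrac{3\sqrt{c_1}}{\sqrt{z_1+1}}\,z_1$ in \eqref{BRF3ra-3} enters only through $x_3'(1)$.

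The main obstacle is precisely to show this expression is nonzero on the admissible range $\kappa\in(0,\tfrac12]$, $\lambda\in(0,\tfrac12)$. I expect the cleanest route is to clear denominators and argue that the numerator of $r_{13}'(1)$, a polynomial in $\kappa,\lambda$, has a fixed sign there, by grouping it into manifestly signed pieces using $1-2\lambda>0$ and $1-4\kappa^2\ge0$, rather than attempting to locate its zero set. Should a direct sign estimate prove elusive, an alternative is to evaluate $r_{13}'(1)$ at the two extreme regimes $\kappa\to0$ and $\kappa=\tfrac12$ and establish monotonicity in $\kappa$; in either approach the heart of the argument is this single, somewhat heavy, algebraic non-vanishing verification, after which the local injectivity of $r_{13}$, and hence pairwise non-homotheticity near $(g_0,H_0)$, follows at once.
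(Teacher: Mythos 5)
Your reduction of the theorem to the single non-vanishing $r_{13}'(1)\ne 0$ is correct and is exactly the paper's route: the paper also uses $x_2=z_1x_1$, forms $r_{13}=r_1/r_3$, gets differentiability of $x_3(z_1)$ at $z_1=1$ from \eqref{x3p}, and computes (by computer algebra) $x_3'(1)=-\tfrac{8-5\lambda}{8(1-\lambda)}$ and
\begin{equation*}
r_{13}'(1)=\tfrac{3\lambda\left(2\kappa(\lambda-1)+3\lambda-1\right)}{32(\lambda-1)^3}.
\end{equation*}
Your treatment of the homothety invariant is in fact more careful than the paper's: when $2\dim\pg_1=\dim\pg_3$ (which does occur on the admissible list, e.g.\ for $\SO(6)/\SO(5)$, where $\dim\pg_1=\dim\pg_2=5$ and $\dim\pg_3=10$) the two Ricci eigenvalues have equal multiplicities and the ``swapped'' matching must be excluded; your argument via $r_{13}(1)=\tfrac{1}{2(1-\lambda)}<1$ and continuity does this, whereas the paper only records $\dim\pg_1\ne\dim\pg_3$.

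The genuine gap is in your final step, and it is fatal as proposed. No sign estimate or monotonicity argument on the rectangle $\kappa\in(0,\tfrac12]$, $\lambda\in(0,\tfrac12)$ can prove $2\kappa(\lambda-1)+3\lambda-1\ne 0$, because this expression really does vanish inside that rectangle: its zero set is the curve $\lambda=\tfrac{2\kappa+1}{2\kappa+3}$, which for $\kappa\in(0,\tfrac12)$ sweeps out all of $\lambda\in(\tfrac13,\tfrac12)$. For instance $(\kappa,\lambda)=(\tfrac14,\tfrac37)$ lies in your admissible region and kills the numerator, and the expression changes sign there (it is negative at $(\tfrac14,0.4)$ and positive at $(\tfrac14,0.45)$), so neither the grouping via $1-2\lambda>0$, $1-4\kappa^2\ge0$ nor the extreme-value/monotonicity alternative can close the argument. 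The missing idea is that $\kappa$ and $\lambda$ are not independent parameters: by \cite[(10)]{stab} one has $\kappa=(1-2\lambda)a$ with $a=\dim\kg/(\dim\ggo-\dim\kg)$, so vanishing of $r_{13}'(1)$ would force $\lambda=\tfrac{2\kappa+1}{2\kappa+3}>\tfrac13$, hence $\dim\ggo<3\dim\kg$ by D'Atri--Ziller \cite{DtrZll}, together with the arithmetic relation $a=\tfrac{3\lambda-1}{2(1-\lambda)(1-2\lambda)}$. The paper completes the proof by checking, case by case against the classification of the admissible spaces $G/K$ listed in Remark \ref{posib}, that no space satisfies these conditions simultaneously. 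Without this classification check (or some substitute exploiting the relation between $\kappa$, $\lambda$ and the dimensions), the non-vanishing simply is not true as a statement about abstract pairs $(\kappa,\lambda)$ in the given ranges, and your proof cannot be completed.
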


\begin{remark}\label{posib}
The possibilities for the space $G/K$ in the theorem are the following irreducible symmetric spaces (see \cite[7.102]{Bss}),
$$
\begin{array}{c}
\SU(n)/\SO(n),\; n\geq 3, \quad \SU(2n)/\Spe(n),\; n\geq 2, \quad \SO(n)/\SO(n-1),\; n\geq 6, \\ 
F_4/\SO(9), \quad E_6/\Spe(4), \quad E_6/F_4, \quad E_7/\SU(8), \quad E_8/\SO(16), 
 \end{array}
$$ 
the non-symmetric isotropy irreducible spaces consisting of $7$ infinite families (see \cite[7.106]{Bss}) and $24$ individual spaces (see \cite[7.107]{Bss}), and the following non-isotropy irreducible spaces (see \cite[7.108, 7.109]{Bss} or \cite[Tables 1,2,3]{stab}),
$$
\begin{array}{c}
\SO(n)/K,\; \dim{K}=n>3, \quad \SO(8)/G_2, \quad F_4/\Spin(8), \\ 
 E_7/\SO(8), \quad E_8/\SO(5), \quad E_8/\SO(9), \quad E_8/\Spin(9). 
 \end{array}
$$ 
\end{remark}

\begin{remark}\label{lowdim2}
The lowest dimensional (irreducible) examples where we obtain a pairwise non-homothetic curve are 
$$
\begin{array}{c}
M^{13}=\SU(3)\times\SU(3)/\SO(3), \qquad
M^{17}=\Spe(2)\times\Spe(2)/\SU(2), \\ 
M^{20}=\SU(4)\times\SU(4)/\Spe(2), \qquad 
M^{20}=G_2\times G_2/\SU(3).  
\end{array}
$$  
\end{remark}

\begin{remark}
The aligned homogeneous space $M=G\times G/\Delta K$ in the theorem has $c_1=2$, $\kappa_1=\kappa_2=\kappa\leq\unm$, $t=1$, $c_{11}=c_{21}=2\lambda<1$ (see \eqref{cij}), where $\lambda:=\lambda_1$.  
\end{remark}

\begin{proof}
It follows from Theorem \ref{BRFggs2-kss}, (i) that 
$$
x_1= \tfrac{1}{2 \kappa+1} \left( \kappa\left(\tfrac{x_3}{z_1+1} +\tfrac{z_1+1}{x_3 z_1^2}\right) + \sqrt{ \kappa^2\left(\tfrac{x_3}{z_1+1} +\tfrac{z_1+1}{x_3 z_1^2} \right)^2 + \tfrac{1-4\kappa^2}{z_1^2} } \right),  \qquad x_2=z_1x_1, 
$$
and from Proposition \ref{BRFggs2-kss-ric} that  
$$
r_1(z_1) 
=  \tfrac{(2\kappa+1)x_1(z_1+1) - 2\kappa x_{3}}{4x_1^2z_1(z_1+1)}, \qquad 
r_3(z_1) 
= \tfrac{\lambda(z_1+1)^2x_1^2+(1-2\lambda)x_3^2}{2x_3z_1(z_1+1)x_1^2}, 
$$
so  
$$
r_{13}(z_1) = \tfrac{x_3((2\kappa+1)x_1(z_1+1) - 2\kappa x_{3})}{2(\lambda(z_1+1)^2x_1^2+(1-2\lambda)x_3^2)}. 
$$
Note that $\dim{\pg_1}=\dim{\pg_2}=\dim{\ggo}-\dim{\kg}\ne\dim{\kg}=\dim{\pg_3}$ (see Remark \ref{posib}), so the value of the function $r_{13}$ must coincide on any two homothetic metrics.  

With the aid of a computer algebra system, it is straightforward to check that
$$
\tfrac{\partial}{\partial x_3} F\left(2,1\right)=16(1-\lambda), \qquad  
\tfrac{\partial}{\partial z_1} F\left(2,1\right)=16-10\lambda, 
$$
where $F(x_3,z_1)$ is as in Remark \ref{F}, so $x_3'(1)=-\tfrac{8-5\lambda}{8(1-\lambda)}$ by \eqref{x3p} and one obtains that 
$$
r_{13}'(1)=\tfrac{3\lambda(2\kappa(\lambda-1)+3\lambda-1)}{32(\lambda-1)^3}. 
$$  
Thus the family $(g(z_1),H(z_1))$ is pairwise non-homothetic in a neighborhood of $(g_0,H_0)$ if 
$$
2\kappa(\lambda-1)+3\lambda-1\ne 0.
$$ 
This quantity vanishes if and only if $\lambda=\frac{2\kappa+1}{2\kappa+3}$, which would imply that $2\lambda>\frac{2}{3}$ and so $\dim{\ggo}<3\dim{\kg}$ by \cite[Theorem 11, pp.35]{DtrZll}.  
According to \cite[(10)]{stab}, $\kappa=(1-2\lambda)a$, where $a:=\frac{\dim{\kg}}{\dim{\ggo}-\dim{\kg}}$, thus $a=\frac{3\lambda-1}{2(1-\lambda)(1-2\lambda)}$.  It is now easy to check that none of the spaces listed in Remark \ref{posib} satisfy these conditions.   
\end{proof}

For the space $M^{13}=\SU(3)\times\SU(3)/\SO(3)$, we have that $\kappa=\unm$ and $\lambda=\frac{1}{12}$, so according to the above proof, $r_{13}'(1)=\frac{45}{2.11^3}\approx 0.0169$, a very small number.  

In the case when $\kappa_1\ne\kappa_2$, it is clear from the formulas for $r_1$ and $r_2$ given in Proposition \ref{BRFggs2-kss-ric} that the function $r_{12}$ will most likely be non-constant, giving rise to a curve $(g(z_1),H(z_1))$, $z_1\in (a,b)$ of BRF generalized metrics which is  pairwise non-homothetic on most if not all of these homogeneous spaces. 

\begin{example}\label{curve}
We consider the homogeneous space 
$$
M^{35}=G/K=\SO(8)\times\SO(7)/G_2, 
$$ 
where the embeddings are the usual ones.  Note that the dimensions of $\pg_1$, $\pg_2$ and $\pg_3$ are respectively $14$, $7$ and $14$.  It is well know that the standard metric on the $14$-dimensional non-isotropy irreducible homogeneous space $M_1=\SO(8)/G_2$ is Einstein with Einstein constant $\rho_1=\frac{5}{12}$ and Casimir constant $\kappa_1=2\rho_1-\unm=\frac{1}{3}$ (see \cite{WngZll2} or \cite[Table 9]{stab}).  On the other hand, $M_2=\SO(7)/G_2$ is an isotropy irreducible space with $\rho_2=\frac{9}{20}$ and $\kappa_2=2\rho_2-\unm=\frac{2}{5}$ (see \cite[Table 3]{Sch}).  It follows from \cite[(10)]{stab} that $c_1\lambda=\frac{2}{3}$ and $c_2\lambda=\frac{4}{5}$ (see \eqref{cij}), which gives $c_1=\frac{11}{6}$, $c_2=\frac{11}{5}$ and $\lambda=\frac{4}{11}$.  Thus $c_1-1=\tfrac{5}{6}$ and $x_3(\tfrac{5}{6})=\tfrac{11}{5}$.  With the aid of a computer algebra system, it is straightforward to check that
$$
\tfrac{\partial}{\partial x_3} F\left(\tfrac{11}{5},\tfrac{5}{6}\right)=\tfrac{847}{90}, \qquad  
\tfrac{\partial}{\partial z_1} F\left(\tfrac{11}{5},\tfrac{5}{6}\right)=\tfrac{1994}{125}, 
$$
so $x_3'(\tfrac{5}{6})=-\tfrac{35892}{21175}\approx -1.695$ and by using \eqref{x3p} and Proposition \ref{BRFggs2-kss-ric}, one obtains that $r_{12}'(\tfrac{5}{6})=-\tfrac{864}{46585}\approx -0.0185$ and $r_{13}'(\tfrac{5}{6})=-\tfrac{2160}{41503}\approx 0.052$.  This implies that the curve $(g(z_1),H(z_1))$ of BRF generalized metrics is indeed pairwise non-homothetic around $(g_0,H_0)$.  
\end{example}

\section{BRF metrics on compact Lie groups}\label{simple-sec}

We consider in this section a connected compact Lie group $M^n=G$.  It is well known that every class in $H^3(G)$ has a unique bi-invariant representative, which in the semisimple case it is necessarily a {\it Cartan $3$-form}: 
$$
\overline{Q}(X,Y,Z):=Q([X,Y],Z), \qquad \forall X,Y,Z\in\ggo, 
$$ 
where $Q$ is a bi-invariant symmetric bilinear form on $\ggo$ (see \cite[Section 3]{H3} for further information).  Thus $b_3(G)$ is equal to the number of simple factors if $G$ is semisimple, and it is well known that Cartan $3$-forms are all harmonic with respect to any bi-invariant metric on $G$.  In contrast, in general, for a given left-invariant metric $g$, the $g$-harmonicity of a Cartan $3$-form depends on very tricky conditions in terms of the structural constants.  

We fix from now on a bi-invariant metric $g_b$ on $G$.  For any left-invariant metric $g$ on $G$, there exists a $g_b$-orthonormal basis $\{ e_1,\dots,e_n\}$ of $\ggo$ such that $g(e_i,e_j)=x_i\delta_{ij}$ for some $x_1,\dots,x_n>0$, which will be denoted by $g=(x_1,\dots,x_n)_{g_b}$.  Note that $\{ e_i/\sqrt{x_i}\}$ is a $g$-orthonormal basis of $\ggo$ with dual basis $\{ \sqrt{x_i}e_i\}$, where $e_i$ also denotes the dual basis defined by $e_i(e_j):=\delta_{ij}$.  The ordered basis $\{ e_1,\dots,e_n\}$ determines structural constants given by 
$$
[e_i,e_j]=\sum_k c_{ij}^ke_k, \qquad \mbox{or equivalently}, \quad c_{ij}^k:=g_b([e_i,e_j],e_k).      
$$
If $H_b:=g_b([\cdot,\cdot],\cdot)$, then 
\begin{equation}\label{Hk2}
(H_b)_g^2(e_k,e_l) = \sum_{i,j} \tfrac{1}{x_ix_j} g_b([e_k,e_i],e_j)g_b([e_l,e_i],e_j) 
= \sum_{i,j} \tfrac{c_{ij}^kc_{ij}^l}{x_ix_j}, \qquad\forall k,l.  
\end{equation}
Concerning Ricci curvature, it is well known that (see e.g.\ \cite[(18)]{stab-dos})
\begin{equation}\label{Rc}
\ricci(g)(e_k,e_l) = -\unm\kil_\ggo(e_k,e_l)-\unc\sum_{i,j} c_{ij}^kc_{ij}^l\tfrac{x_i^2+x_j^2-x_kx_l}{x_ix_j}, \qquad\forall k, l.
\end{equation}
Now using that 
\begin{equation}\label{cijk2}
-\kil_\ggo(e_k,e_l) = -\tr{\ad{e_k}\ad{e_l}}  = \sum_{i,j} c_{ij}^kc_{ij}^l, \qquad\forall k,l,
\end{equation}
we obtain that $\ricci(g)=\unc (H_b)_g^2$, which is a necessary condition for the generalized metric $(g,H_b)$ to be BRF, if and only 
\begin{equation}\label{BRF1}
\sum_{i,j} c_{ij}^kc_{ij}^l\tfrac{(x_i-x_j)^2-x_kx_l+1}{x_ix_j}=0, \qquad\forall k,l.
\end{equation}
Note that $g_b=(1,\dots,1)$ is always a solution to \eqref{BRF1}, in accordance to the fact that $(g_b,H_b)$ is indeed BRF.  

\begin{proposition}\label{BRF-prop}
Let $(g,H)$ be a left-invariant generalized metric on a connected compact Lie group $G$.  
\begin{enumerate}[{\rm (i)}] 
\item If the metric $g$ is bi-invariant, say $g=g_b$, and $(g_b,H)$ is BRF, then $H=\pm H_b$.  

\item If $H=\pm H_b$ for some bi-invariant metric $g_b$ and $(g,H_b)$ satisfies that $\ricci(g)=\unc (H_b)_g^2$, then $g=g_b$.  
\end{enumerate}
\end{proposition}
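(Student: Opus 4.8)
The plan is to treat the two parts separately, using the harmonicity condition for (i) and the explicit curvature formulas \eqref{Hk2}--\eqref{cijk2} for (ii). In both parts only the semisimple part of $\ggo$ matters (on a central direction $H_b$ vanishes); for the cleanest form of (i) one should read $G$ as simple, and I will flag where the number of simple factors enters.

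For part (i), the first move is to exploit that $(g_b,H)$ BRF forces $H$ to be $g_b$-harmonic, together with the classical fact that on a compact Lie group the harmonic forms for a bi-invariant metric are exactly the bi-invariant ones. Concretely, the space of harmonic $3$-forms is isomorphic to $H^3(G)$, whose dimension is the number $s$ of simple factors (as recalled at the start of this section), while the bi-invariant $3$-forms $(\Lambda^3\ggo^*)^\ggo=\bigoplus_i(\Lambda^3\ggo_i^*)^{\ggo_i}$ form an $s$-dimensional space spanned by the Cartan $3$-forms of the simple factors and consist of harmonic forms; since bi-invariant $\subseteq$ harmonic and the dimensions agree, the two spaces coincide, so $H=\sum_i a_i\,\overline{(-\kil_{\ggo_i})}$ for scalars $a_i$. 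Writing $g_b=\sum_i\mu_i(-\kil_{\ggo_i})$, a direct computation gives $H_{g_b}^2|_{\ggo_i}=\tfrac{a_i^2}{\mu_i^2}(-\kil_{\ggo_i})$ while $\ricci(g_b)|_{\ggo_i}=\unc(-\kil_{\ggo_i})$, so the symmetric BRF equation $\ricci(g_b)=\unc H_{g_b}^2$ forces $a_i^2=\mu_i^2$, i.e. $a_i=\pm\mu_i$ on each factor. As $H_b=\sum_i\mu_i\,\overline{(-\kil_{\ggo_i})}$, this gives $H=\pm H_b$ when $G$ is simple (and $H=\pm H_b$ on each simple factor in general).

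For part (ii), since $H=\pm H_b$ implies $H_g^2=(H_b)_g^2$, the hypothesis is exactly the symmetric condition $\ricci(g)=\unc(H_b)_g^2$, which by \eqref{Hk2}, \eqref{Rc} and \eqref{cijk2} is equivalent to the system \eqref{BRF1}. I would diagonalize $g$ in a $g_b$-orthonormal basis $\{e_k\}$, so that the $c_{ij}^k$ are totally antisymmetric by bi-invariance of $g_b$, and then use only the diagonal equations $k=l$ of \eqref{BRF1}. The key step is to rewrite the $k$-th diagonal equation as
\[
D_k=(x_k^2-1)\,S_k,\qquad D_k:=\sum_{i,j}(c_{ij}^k)^2\tfrac{(x_i-x_j)^2}{x_ix_j}\ge 0,\quad S_k:=(H_b)_g^2(e_k,e_k)=\sum_{i,j}\tfrac{(c_{ij}^k)^2}{x_ix_j}.
\]
Because $\ggo$ is semisimple no $e_k$ is central, hence $S_k>0$ for every $k$; as $D_k\ge 0$, this identity immediately yields $x_k\ge 1$ for all $k$.

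To finish I would pick an index $p$ with $x_p=M:=\max_k x_k$. Since every $x_i\in[1,M]$, each term obeys $(x_i-x_j)^2\le (M-1)^2$, whence $D_p\le (M-1)^2 S_p$; comparing with $D_p=(M^2-1)S_p$ and cancelling $S_p>0$ gives $M^2-1\le (M-1)^2$, i.e. $M\le 1$. Together with $x_k\ge 1$ for all $k$ this forces $x_k=1$ for every $k$, that is $g=g_b$. The main obstacle is really discovering the reformulation $D_k=(x_k^2-1)S_k$: once the diagonal equations are organized so that the manifestly nonnegative "difference" part $D_k$ is isolated against the positive quantity $S_k$, the sign analysis (lower bound from $D_k\ge 0$, upper bound from the extremal eigenvalue) is essentially forced, and the off-diagonal equations of \eqref{BRF1} are not even needed. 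I would also record that semisimplicity is essential here: along a central direction one has $S_k=0$ and $x_k$ stays unconstrained, so the statement genuinely depends on $H_b\neq 0$ seeing every direction of $\ggo$.
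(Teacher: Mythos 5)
Your proof of part (ii) is correct and is essentially the paper's own argument: your identity $D_k=(x_k^2-1)S_k$ is precisely the paper's rewriting \eqref{BRF4} of the diagonal equations of \eqref{BRF1}, and your maximum-eigenvalue comparison ($M^2-1\le (M-1)^2$, hence $M\le 1$) is a mildly cleaner variant of the paper's two-sided estimate $x_1\le x_n\le\tfrac{x_1^2+1}{2x_1}\le x_1$; like the paper, you only need the diagonal equations. Your part (i) also follows the paper's route: harmonicity forces bi-invariance of $H$ (the paper asserts this, you justify it by a dimension count), $H$ decomposes into Cartan forms, and comparing $\unc H_{g_b}^2$ with $\ricci(g_b)$ factor by factor gives $a_i=\pm\mu_i$. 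Moreover, your two caveats are accurate and in fact sharper than the text: the paper's proof of (ii) also needs $S_k>0$ for every $k$ (its ``convex hull'' step is vacuous on central directions, and indeed on $\SU(2)\times S^1$ one can rescale $g_b$ on the center without violating $\ricci(g)=\unc(H_b)_g^2$), so semisimplicity is genuinely needed there; and in (i) the paper's passage from $y_i^2/z_i^2=1$ to ``$z_i=y_i$'' ignores the sign freedom, so for two or more simple factors the correct conclusion is an independent sign on each factor, exactly as you state.

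The one genuine gap is that your part (i) does not cover $\zg(\ggo)\ne 0$, a case the statement includes and the paper's proof does treat. Your reduction (``only the semisimple part matters since $H_b$ vanishes on central directions'') concerns $H_b$, but in (i) the unknown is $H$: a priori $H$ has a component $H_0\in\Lambda^3\zg(\ggo)^*$, and when $\dim\zg(\ggo)\ge 3$ such forms are bi-invariant, closed and $g_b$-harmonic, so neither harmonicity nor a dimension count (which in this case reads $b_3(G)=r+\binom{\dim\zg(\ggo)}{3}$) can remove them. The missing step, present in the paper, is to evaluate the symmetric BRF equation on central directions: there $\ricci(g_b)=0$, while $(H_0)_{g_0}^2$ is a sum of squares, nonnegative and vanishing only if $H_0=0$; hence $H_0=0$. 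Adding this observation (and keeping the per-factor signs) upgrades your argument to the full statement.
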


\begin{remark}
In other words, $(g_b,\pm H_b)$ is the only BRF generalized metric on $G$ of the form $(g,\pm H_b)$ or $(g_b,H)$, where $g$ and $H$ are respectively any left-invariant metric and any $3$-form. 
\end{remark}

\begin{proof}
We first prove part (ii).  Each equation in \eqref{BRF1} with $k=l$ can be rewritten as 
\begin{equation}\label{BRF4}
\sum_{i,j} (c_{ij}^k)^2\tfrac{(x_i-x_j)^2}{x_ix_j}=(x_k^2-1)\sum_{i,j} (c_{ij}^k)^2\tfrac{1}{x_ix_j}, 
\end{equation}
which implies that for any $k$, $x_k\geq 1$ and the number $x_k^2-1$ belongs to the convex hull of $\{ (x_i-x_j)^2:1\leq i,j\leq n\}$.  In particular, if say, $x_1\leq\dots\leq x_n$, then $x_n^2-1\leq (x_1-x_n)^2$ and so $x_1\leq x_n\leq \frac{x_1^2+1}{2x_1}\leq x_1$.  Hence $x_1=\dots=x_n=1$, i.e., $g=g_b$.  

On the other hand, to prove part (i), we consider the decomposition 
$$
\ggo=\ggo_1\oplus\dots\oplus\ggo_r\oplus\zg(\ggo),
$$ 
where $\ggo_i$ is a simple ideal for all $i$ and assume that $(g_b,H)$ is BRF.  We have that $g_b=z_1g_1+\dots+z_rg_r+g_0$ for some $z_i>0$ and metric $g_0$ on $\zg(\ggo)$, where $g_i:=-\kil_{\ggo_i}$.  Since $H$ is $g_b$-harmonic, $H$ is necessarily  bi-invariant and so $H=y_1H_1+\dots+y_rH_r+H_0$ for some $y_i\in\RR$ and $3$-form $H_0$ on $\zg(\ggo)$, where $H_i:=-\kil_{\ggo_i}([\cdot,\cdot],\cdot)$.  Using that $(z_ig_i,z_iH_i)$ is BRF on $G_i$ for all $i$, we obtain that 
\begin{align*}
\unc g_1+\dots+\unc g_r =& \ricci(g_b) = \unc(H)_{g_b}^2  
= \unc y_1^2(H_1)_{z_1g_1}^2+\dots+\unc y_r^2(H_r)_{z_rg_r}^2+\unc(H_0)_{g_0}^2\\
=& \tfrac{y_1^2}{z_1^2}\ricci(z_1g_1)+\dots+\tfrac{y_r^2}{z_r^2}\ricci(z_rg_r)+\unc(H_0)_{g_0}^2\\
=& \unc\tfrac{y_1^2}{z_1^2}g_1+\dots+\unc\tfrac{y_r^2}{z_r^2}g_r+\unc(H_0)_{g_0}^2,
\end{align*}
so $z_i=y_i$ for all $i$ and $H_0=0$, that is, $H=g_b([\cdot,\cdot],\cdot)$.  
\end{proof}

\appendix

\section{Corrigendum}\label{app} 

This appendix is the content of \cite{BRF-C} and its purpose is to report an incorrect formula in the previous arXiv version of this paper (i.e., Sections 1-6), which was published as \cite{BRF}.  The consequences of this mistake for the statement and the proof of the main theorem are also worked out.  

There is a mistake in the formula for $(H_Q)_{g}^2(e^3_\gamma,e^3_\gamma)$ given in Proposition \ref{HQ2-s2}, the number $\sqrt{B_4}$ must be deleted from the last summand.  This is due to an incorrect use of \eqref{L71} in replacing $Q(e^4_\beta,e^3_\beta)$ in the ninth line of the proof.  The correct statement is the following. 

\begin{proposition}\label{prop} (cf.\ Proposition \ref{HQ2-s2}).
If $e^3_\gamma\in\pg_3^l$, where $l=0,1,\dots,t$, then 
\begin{align*}
(H_Q)_{g}^2(e^3_\gamma,e^3_\gamma) 
= & \tfrac{1}{x_1^2B_3} \left(\tfrac{y_1}{z_1} + \tfrac{C_3}{B_{4}} \right)^2 \tfrac{1-c_1\lambda_l}{c_1} 
 +\tfrac{1}{x_2^2B_3} \left(\tfrac{y_2A_3}{z_2} + \tfrac{C_3}{B_{4}}\right)^2 \tfrac{1-c_2\lambda_l}{c_2} \\ 
& +\tfrac{\lambda_l}{x_3^2B_3^3}\left(\tfrac{y_1}{c_1}+A_3^3\tfrac{y_2}{c_2} + \tfrac{3C_3}{B_{4}} \left(\tfrac{z_1}{c_1}+A_3^2\tfrac{z_2}{c_2}\right)\right)^2.  
\end{align*}
\end{proposition}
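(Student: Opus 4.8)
The plan is to rerun the computation in the proof of Proposition \ref{HQ2-s2}, which is correct everywhere except for a single substitution, and to replace that substitution by the right one. Since the expansion \eqref{HQ2} of $(H_Q)_g^2(e^3_\gamma,e^3_\gamma)$ separates into contributions indexed by $\pg_1\times\pg_1$, $\pg_2\times\pg_2$ and $\pg_3\times\pg_3$, and the first two are supplied verbatim by Lemma \ref{HQ2-1} (which is untouched by the error), the only block that must be recomputed is $\tfrac{1}{x_3^2}\sum_{\alpha,\beta}H_Q(e^3_\gamma,e^3_\alpha,e^3_\beta)^2$.

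First I would record the two pieces entering $H_Q(e^3_\gamma,e^3_\alpha,e^3_\beta)$ via \eqref{HQ-def}. The bracket term $Q([e^3_\gamma,e^3_\alpha],e^3_\beta)$ is computed exactly as before using \eqref{gbpi} and \eqref{basis}, giving $\tfrac{-1}{B_3\sqrt{B_3}}\left(\tfrac{y_1}{c_1}+A_3^3\tfrac{y_2}{c_2}\right)\la[Z^\gamma,Z^\alpha],Z^\beta\ra$, and is unaffected. The correction enters only in the projection term $Q([e^3_\gamma,e^3_\alpha]_\kg,e^3_\beta)$: writing $[e^3_\gamma,e^3_\alpha]_\kg=\sum_\beta g_b([e^3_\gamma,e^3_\alpha],e^4_\beta)e^4_\beta$ and pairing against $e^3_\beta$, one must substitute $Q(e^4_\beta,e^3_\beta)=\tfrac{-C_3}{\sqrt{B_3B_4}}$ from \eqref{L71}, not the value $\tfrac{-C_3}{B_4\sqrt{B_3}}$ used in the published version. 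Since the bases $\{e^4_\alpha\}$ and $\{e^3_\alpha\}$ carry normalizing factors $1/\sqrt{B_4}$ and $1/\sqrt{B_3}$, carrying $e^4_\beta=\tfrac{1}{\sqrt{B_4}}Z^\beta$ through \eqref{gbpi} then yields $Q([e^3_\gamma,e^3_\alpha]_\kg,e^3_\beta)=\tfrac{-C_3}{B_4B_3\sqrt{B_3}}\left(\tfrac{z_1}{c_1}+A_3^2\tfrac{z_2}{c_2}\right)\la[Z^\gamma,Z^\alpha],Z^\beta\ra$, now with a single power of $B_4$ in the denominator rather than $B_4\sqrt{B_4}$.

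Next I would invoke the $\ad$-invariance identity $Q([e^3_\gamma,e^3_\alpha]_\kg,e^3_\beta)=Q([e^3_\alpha,e^3_\beta]_\kg,e^3_\gamma)=-Q([e^3_\gamma,e^3_\beta]_\kg,e^3_\alpha)$ to collapse the three projection terms in \eqref{HQ-def} into $3\,Q([e^3_\gamma,e^3_\alpha]_\kg,e^3_\beta)$, obtaining $H_Q(e^3_\gamma,e^3_\alpha,e^3_\beta)=\tfrac{-1}{B_3\sqrt{B_3}}\left(\tfrac{y_1}{c_1}+A_3^3\tfrac{y_2}{c_2}+\tfrac{3C_3}{B_4}\left(\tfrac{z_1}{c_1}+A_3^2\tfrac{z_2}{c_2}\right)\right)\la[Z^\gamma,Z^\alpha],Z^\beta\ra$. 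Squaring, summing over $\alpha,\beta$, and using $\sum_{\alpha,\beta}\la[Z^\gamma,Z^\alpha],Z^\beta\ra^2=-\kil_\kg(Z^\gamma,Z^\gamma)=\lambda_l$ (from \eqref{al1} together with $\ip=-\kil_\ggo|_{\kg\times\kg}$) turns the $\pg_3$-block into $\tfrac{\lambda_l}{x_3^2B_3^3}\left(\tfrac{y_1}{c_1}+A_3^3\tfrac{y_2}{c_2}+\tfrac{3C_3}{B_4}\left(\tfrac{z_1}{c_1}+A_3^2\tfrac{z_2}{c_2}\right)\right)^2$, which is exactly the corrected third summand; adding the two unchanged blocks from Lemma \ref{HQ2-1} finishes the proof.

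There is no genuine obstacle here, as the whole computation is arithmetic bookkeeping of the constants $B_3,B_4$ attached to the orthonormal bases \eqref{basis}. The one subtle point, and the exact location of the published error, is that \eqref{L71} must be used in its symmetric form $Q(e^4_\beta,e^3_\beta)=-C_3/\sqrt{B_3B_4}$, with one factor $\sqrt{B_4}$ coming from the normalization of $\{e^4_\alpha\}$ and the factor $\sqrt{B_3}$ from that of $\{e^3_\alpha\}$; reading it instead as $-C_3/(B_4\sqrt{B_3})$ inserts a spurious $1/\sqrt{B_4}$, which is precisely the factor $\sqrt{B_4}$ removed in the corrected statement.
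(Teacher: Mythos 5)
Your proof is correct and takes essentially the same route as the paper's: you rerun the computation from the proof of Proposition \ref{HQ2-s2}, replacing the erroneous substitution $Q(e^4_\beta,e^3_\beta)=\tfrac{-C_3}{B_4\sqrt{B_3}}$ by the correct value $\tfrac{-C_3}{\sqrt{B_3B_4}}$ from \eqref{L71}, which is exactly how the Corrigendum locates and repairs the mistake. All your intermediate expressions match the paper's computation: the unchanged bracket term, the corrected projection term $Q([e^3_\gamma,e^3_\alpha]_\kg,e^3_\beta)=\tfrac{-C_3}{B_4B_3\sqrt{B_3}}\left(\tfrac{z_1}{c_1}+A_3^2\tfrac{z_2}{c_2}\right)\la[Z^\gamma,Z^\alpha],Z^\beta\ra$ (one power of $B_4$, not $B_4\sqrt{B_4}$), the collapse of the three projection terms into a factor of $3$, and the identity $\sum_{\alpha,\beta}\la[Z^\gamma,Z^\alpha],Z^\beta\ra^2=-\kil_\kg(Z^\gamma,Z^\gamma)=\lambda_l$.
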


As a consequence, the generalized metrics provided by Theorem \ref{BRFggs2}, (i) are actually BRF for any $z_1>0$ and the uniqueness results given in parts (ii) and (iii) always hold, without any condition on $\kg$ or the Casimir operators $\cas_{\chi_i}$.  The theorem can now be stated and proved as follows.    

\begin{theorem}\label{thm} (cf.\ Theorem \ref{BRFggs2}).
Let $M=G/K$ be an aligned homogeneous space with $s=2$ such that Assumption \ref{assum} holds.   
\begin{enumerate}[{\rm (i)}]
\item The $G$-invariant generalized metric $(g_0(z_1),H_{0}(z_1))$ defined by $H_{0}(z_1):=H_{Q_0}$, 
\begin{align}
g_0(z_1):=\left(\tfrac{1}{z_1},1,\tfrac{z_1+1}{z_1}\right)_{g_b}, \quad g_b:=z_1(-\kil_{\ggo_1})+ \tfrac{1}{c_1-1}(-\kil_{\ggo_2}), \quad Q_0:=\kil_{\ggo_1}-\tfrac{1}{c_1-1}\kil_{\ggo_2}, \label{gthm}
\end{align}  
is Bismut Ricci flat for any $z_1>0$.

\item The structures $(g_0(z_1),H_{0}(z_1))$, $z_1>0$ are the only $G$-invariant Bismut Ricci flat generalized metrics on $M=G/K$ up to scaling of the form $(g=(x_1,x_2,x_3)_{g_b},H_Q)$.   
\end{enumerate}
\end{theorem}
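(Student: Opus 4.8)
The plan is to reduce the Bismut Ricci flat condition for a structure $(g=(x_1,x_2,x_3)_{g_b},H_Q)$ to the very same system used in the proof of Theorem \ref{BRFggs2}, and then to notice that the correction only touches the equation coming from $\pg_3$. By Proposition \ref{harms2} (harmonicity of $H_Q$), the $\ricci(g)$-orthogonality in Proposition \ref{ricggals2}, (iii) and the vanishing in Lemma \ref{HQ2ort}, the condition is equivalent to one equation on each block $\pg_k$. The equations on $\pg_1$ and $\pg_2$, namely \eqref{BRF1ra} and \eqref{BRF2da}, are assembled from Proposition \ref{ricggals2}, (i),(ii) and Proposition \ref{HQ2s}, none of which is affected by the erratum, so they survive verbatim. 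Hence I would only recompute the $\pg_3^l$-equation \eqref{BRF3ra}, rerunning the proof of Lemma \ref{lemma} but feeding in the corrected last summand of Proposition \ref{prop}, i.e.\ $\tfrac{3C_3}{B_4}$ in place of $\tfrac{3C_3}{B_4\sqrt{B_4}}$.

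The decisive simplification is that, after inserting the normalizations $A_3=-z_1$, $B_3=\tfrac{z_1(z_1+1)}{c_1}$, $B_4=C_3=\tfrac{z_1+1}{c_1}$, the corrected factor collapses to a perfect power,
\[
\tfrac{y_1}{c_1}+A_3^3\tfrac{y_2}{c_2}+\tfrac{3C_3}{B_4}\Big(\tfrac{z_1}{c_1}+A_3^2\tfrac{z_2}{c_2}\Big)=\tfrac{(z_1+1)^3}{c_1},
\]
so that the corrected $\pg_3^l$-equation reads
\[
\lambda_l\Big((z_1+1)^2+\big(\tfrac{(z_1+1)^2}{z_1^2}-x_3^2\big)\big(\tfrac{1}{x_1^2}+\tfrac{z_1^2}{x_2^2}\big)-\tfrac{(z_1+1)^4}{x_3^2z_1^2}\Big)=\tfrac{1}{c_1}\big(\tfrac{(z_1+1)^2}{z_1^2}-x_3^2\big)\big(\tfrac{1}{x_1^2}+\tfrac{z_1^2(c_1-1)}{x_2^2}\big).
\]
Writing $w:=\tfrac{(z_1+1)^2}{z_1^2}-x_3^2$ and using the identity $(z_1+1)^2-\tfrac{(z_1+1)^4}{x_3^2z_1^2}=-\tfrac{(z_1+1)^2}{x_3^2}\,w$, I would factor the equation as
\[
w\Big[\lambda_l\big(\tfrac{1}{x_1^2}+\tfrac{z_1^2}{x_2^2}-\tfrac{(z_1+1)^2}{x_3^2}\big)-\tfrac{1}{c_1}\big(\tfrac{1}{x_1^2}+\tfrac{z_1^2(c_1-1)}{x_2^2}\big)\Big]=0 .
\]

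With this factorization both parts follow. For part (i) I would check that $g_0(z_1)$ has $x_3=\tfrac{z_1+1}{z_1}$, hence $w=0$, and verify directly that $x_1=\tfrac1{z_1}$, $x_2=1$, $x_3=\tfrac{z_1+1}{z_1}$ solve \eqref{BRF1ra} and \eqref{BRF2da}; this is valid for every $z_1>0$, giving the curve. For part (ii) the key is that, using $\tfrac{c_1-1}{c_1}=\tfrac1{c_2}$, the bracket equals $-p_l-\lambda_l\tfrac{(z_1+1)^2}{x_3^2}$ with $p_l:=\big(\tfrac1{c_1}-\lambda_l\big)\tfrac1{x_1^2}+\big(\tfrac1{c_2}-\lambda_l\big)\tfrac{z_1^2}{x_2^2}\ge 0$ by \eqref{cij} (indeed $c_i\lambda_l=c_{il}\le 1$, so $\lambda_l\le\min\{1/c_1,1/c_2\}$). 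If $\lambda_l=0$ this is $-p_l=-\big(\tfrac{1}{c_1x_1^2}+\tfrac{z_1^2}{c_2x_2^2}\big)<0$, and if $\lambda_l>0$ the summand $-\lambda_l\tfrac{(z_1+1)^2}{x_3^2}<0$ makes it negative as well; so the bracket never vanishes and every index $l$ forces $w=0$, i.e.\ $x_3=\tfrac{z_1+1}{z_1}$. Feeding this back into \eqref{BRF1ra} and \eqref{BRF2da} reduces each to a quadratic in $x_1z_1$ (resp.\ $x_2$) whose only positive root is $1$, both when the relevant Casimir is a multiple of the identity and when it is not; thus $x_1=\tfrac1{z_1}$, $x_2=1$, and $(g,H_Q)=(g_0(z_1),H_0(z_1))$ up to scaling.

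The main obstacle I expect is bookkeeping rather than conceptual: carefully re-deriving the corrected $\pg_3^l$-equation, confirming the collapse to $\tfrac{(z_1+1)^3}{c_1}$ and the ensuing clean factorization, since a normalization or sign slip there is exactly what produced the original error. Once the factorization is secured, the sign analysis in part (ii) is robust, because in every case at least one of $p_l$ and $\lambda_l\tfrac{(z_1+1)^2}{x_3^2}$ is strictly positive; the only point requiring care is the bound $c_{il}\le 1$ from \eqref{cij} and, in the fully degenerate symmetric situation $M=G\times G/\Delta G$ (where $\pg_1=\pg_2=0$), simply observing that then only the $\pg_3$ block survives and uniqueness up to scaling is immediate.
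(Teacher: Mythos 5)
Your proposal is correct and follows essentially the same route as the paper's own proof: you keep equations \eqref{BRF1ra}--\eqref{BRF2da} untouched, recompute only the $\pg_3^l$-equation with the corrected Proposition \ref{prop} (your collapse to $\tfrac{(z_1+1)^3}{c_1}$ and the factorization in $w$ is exactly Lemma \ref{lemmaC} up to an overall sign), and then use positivity of the second factor via \eqref{cij} to force $x_3=\tfrac{z_1+1}{z_1}$, hence $x_1=\tfrac{1}{z_1}$, $x_2=1$. The only cosmetic differences are that you treat the recovery of $x_1,x_2$ through the factored quadratic $(u-1)\bigl[(u+1)+2(u-1)\kappa\bigr]=0$ uniformly (the paper splits into the cases $\cas_{\chi_i}$ scalar, via \eqref{x1C}--\eqref{x2C}, and non-scalar, via coefficient comparison), and that you explicitly dispose of the degenerate case $M=G\times G/\Delta G$.
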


\begin{remark}\label{rem000}
It is not hard to prove that each metric $g_0(z_1)$ can also be written diagonally in terms of the standard metric, i.e., $g_0(z_1)=\left(y_1,y_2,y_3\right)_{\gk}$.  Since $H_0:=H_0(\tfrac{1}{c_1-1})$ is the unique (up to scaling) harmonic $3$-form for any of these metrics, it follows from part (ii) that 
$$
g_0(z_1)=g_0(\tfrac{1}{c_1-1}) =\left(c_1-1,1,c_1\right)_{g_b} = \left(1,\tfrac{1}{c_1-1},\tfrac{c_1}{c_1-1}\right)_{\gk} =:g_0, \qquad \forall z_1>0.     
$$
Thus the generalized metrics in part (i) are actually a single one: $(g_0,H_0)$.  Note that in terms of the $\gk$-orthogonal reductive decomposition, $H_0=H_{Q_0}$.  
\end{remark}

\begin{proof}[Proof of Theorem \ref{thm}]
The proof remains exactly the same as the proof of Theorem \ref{BRFggs2} until Lemma \ref{lemma}.  On the other hand, it is easy to see that if $\cas_{\chi_{1}}=\kappa_1I_{\pg_1}$ for some $\kappa_1\in\RR$, then
\begin{align}
x_1=& \tfrac{1}{2 \kappa_1+1} \left( \kappa_1\left(\tfrac{x_3}{z_1+1} +\tfrac{z_1+1}{x_3 z_1^2}\right) + \sqrt{ \kappa_1^2\left(\tfrac{x_3}{z_1+1} +\tfrac{z_1+1}{x_3 z_1^2} \right)^2 + \tfrac{1-4\kappa_1^2}{z_1^2} } \right), \label{x1C} 
\end{align}
and if $\cas_{\chi_{2}}=\kappa_2I_{\pg_2}$, then
\begin{align}
x_2=& \tfrac{z_1}{2 \kappa_2+1} \left( \kappa_2\left(\tfrac{x_3}{z_1+1} +\tfrac{z_1+1}{x_3 z_1^2}\right) + \sqrt{ \kappa_2^2\left(\tfrac{x_3}{z_1+1} +\tfrac{z_1+1}{x_3 z_1^2} \right)^2 + \tfrac{1-4\kappa_2^2}{z_1^2} } \right). \label{x2C} 
\end{align}
The above two formulas are precisely \eqref{x1} and \eqref{x2}, respectively.  

\begin{lemma}\label{lemmaC} (cf.\ Lemma \ref{lemma}).
The third set of equations \eqref{BRF3ra} is equivalent to 
\begin{equation}\label{eq}
\left(x_3^2-\tfrac{(z_1+1)^2}{z_1^2}\right)\left(\lambda_l\tfrac{(z_1+1)^2}{x_3^ 2}
+\left(\tfrac{1}{c_1}-\lambda_l\right)\tfrac{1}{x_1^2} 
+\left(\tfrac{1}{c_2}-\lambda_l\right)\tfrac{z_1^2}{x_2^2}\right) = 0, 
\end{equation}
for any $l=0,1,\dots,t$. 
\end{lemma}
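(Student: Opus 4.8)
The plan is to mirror the proof of Lemma \ref{lemma} exactly, reusing the computation of $r_l$ verbatim (it originates from the Ricci operator in Proposition \ref{ricggals2}, (iv), which the correction does not affect) and only reworking the term $(H_Q)_g^2(e^3_\gamma,e^3_\gamma)$ via the corrected Proposition \ref{prop}. From the proof of Lemma \ref{lemma}, after substituting $B_3=\tfrac{z_1(z_1+1)}{c_1}$ and $A_3=-z_1$, I already have
\begin{align*}
4x_3 r_l = \tfrac{1}{z_1(z_1+1)}\left[c_1\lambda_l\Big((z_1+1)^2-\tfrac{x_3^2}{x_1^2}-\tfrac{x_3^2 z_1^2}{x_2^2}\Big)+\tfrac{x_3^2}{x_1^2}+\tfrac{x_3^2 z_1^2(c_1-1)}{x_2^2}\right].
\end{align*}

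Next I would substitute the normalized constants $y_1=1$, $y_2=-\tfrac{1}{c_1-1}$, $z_2=\tfrac{1}{c_1-1}$, $c_2=\tfrac{c_1}{c_1-1}$, $A_3=-z_1$, $B_4=C_3=\tfrac{z_1+1}{c_1}$ into the corrected formula. The first two coefficients are unchanged from Lemma \ref{lemma}: $\tfrac{y_1}{z_1}+\tfrac{C_3}{B_4}=\tfrac{z_1+1}{z_1}$ and $\tfrac{y_2 A_3}{z_2}+\tfrac{C_3}{B_4}=z_1+1$. The decisive new computation is the last summand, where $\tfrac{3C_3}{B_4}$ now replaces $\tfrac{3C_3}{B_4\sqrt{B_4}}$; since $\tfrac{C_3}{B_4}=1$, its bracket collapses to $\tfrac{1+z_1^3}{c_1}+3\cdot\tfrac{z_1(1+z_1)}{c_1}=\tfrac{(z_1+1)^3}{c_1}$, a perfect cube, in pleasant contrast with the messier expression of the erroneous version. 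This yields
\begin{align*}
(H_Q)_g^2(e^3_\gamma,e^3_\gamma)=\tfrac{(z_1+1)(1-c_1\lambda_l)}{x_1^2 z_1^3}+\tfrac{(z_1+1)(c_1-1)(1-c_2\lambda_l)}{x_2^2 z_1}+\tfrac{\lambda_l c_1 (z_1+1)^3}{x_3^2 z_1^3}.
\end{align*}

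Then I would equate $4x_3 r_l=(H_Q)_g^2(e^3_\gamma,e^3_\gamma)$, clear the denominator $z_1(z_1+1)$, and move everything to one side. Grouping the three kinds of terms, those with $\tfrac{1}{x_1^2}$, those with $\tfrac{1}{x_2^2}$, and the residual $\tfrac{1}{x_3^2}$ part, each group factors as a constant times $\big(x_3^2-\tfrac{(z_1+1)^2}{z_1^2}\big)$. Using the identities $1-c_1\lambda_l=c_1\big(\tfrac{1}{c_1}-\lambda_l\big)$ and $(c_1-1)(1-c_2\lambda_l)=c_1\big(\tfrac{1}{c_2}-\lambda_l\big)$, each of these three constants turns out to be $c_1$ times the corresponding coefficient in \eqref{eq}; since $c_1>0$ this overall factor of $c_1$ may be dropped, leaving precisely \eqref{eq}.

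The bulk of the work is routine algebra, so the only genuine obstacle is bookkeeping: tracking signs and powers of $z_1$ while checking that all three groups indeed share the common factor $\big(x_3^2-\tfrac{(z_1+1)^2}{z_1^2}\big)$ and the common constant $c_1$. The key observation that makes the factorization transparent, and that distinguishes this corrected argument from the original, is the collapse $1+3z_1+3z_1^2+z_1^3=(z_1+1)^3$ of the $\pg_3$-diagonal contribution.
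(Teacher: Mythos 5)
Your proposal is correct and follows essentially the same route as the paper's own proof: reuse the unchanged $r_l$ computation, substitute the normalized constants into the corrected Proposition \ref{prop} (with $C_3/B_4=1$ making the $\pg_3$-bracket collapse to $\tfrac{(z_1+1)^3}{c_1}$, exactly the paper's $(z_1+1)^2(z_1^2+2z_1+1)^2=(z_1+1)^6$), and then factor out $\bigl(x_3^2-\tfrac{(z_1+1)^2}{z_1^2}\bigr)$ together with an overall factor $c_1$ using $1-c_1\lambda_l=c_1(\tfrac{1}{c_1}-\lambda_l)$ and $c_1-1-c_1\lambda_l=c_1(\tfrac{1}{c_2}-\lambda_l)$. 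The only cosmetic difference is that you group the $\tfrac{1}{x_1^2}$, $\tfrac{1}{x_2^2}$ and $\tfrac{1}{x_3^2}$ terms and factor each at once, while the paper reaches the same factorization through a chain of displayed equivalences; all steps are reversible, so the claimed equivalence of \eqref{BRF3ra} with \eqref{eq} holds.
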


\begin{proof}[Proof of Lemma \ref{lemmaC}]
The computation of $r_l$ given in the paper is correct and by Proposition \ref{prop}, 
\begin{align*}
(H_Q)_{g}^2(e^3_\gamma,e^3_\gamma) 
= & \tfrac{1}{x_1^2B_3} \left(\tfrac{y_1}{z_1} + \tfrac{C_3}{B_{4}} \right)^2 \tfrac{1-c_1\lambda_l}{c_1} 
 +\tfrac{1}{x_2^2B_3} \left(\tfrac{y_2A_3}{z_2} + \tfrac{C_3}{B_{4}}\right)^2 \tfrac{1-c_2\lambda_l}{c_2} \\ 
& +\tfrac{\lambda_l}{x_3^2B_3^3}\left(\tfrac{y_1}{c_1}+A_3^3\tfrac{y_2}{c_2} + \tfrac{3C_3}{B_{4}} \left(\tfrac{z_1}{c_1}+A_3^2\tfrac{z_2}{c_2}\right)\right)^2 \\ 
= & \tfrac{1}{x_1^2B_3} \left(\tfrac{1}{z_1} + 1\right)^2 \tfrac{1-c_1\lambda_l}{c_1} 
 +\tfrac{1}{x_2^2B_3} \left(z_1 +1\right)^2 \tfrac{(1-\tfrac{c_1}{c_1-1}\lambda_l)(c_1-1)}{c_1} \\ 
& +\tfrac{\lambda_l}{x_3^2B_3^3}\left(\tfrac{1}{c_1}-z_1^3\tfrac{-1}{c_1} + 3 \left(\tfrac{z_1}{c_1}+z_1^2\tfrac{1}{c_1}\right)\right)^2 \\ 
= & \tfrac{1}{x_1^2B_3c_1} \tfrac{(z_1+1)^2}{z_1^2} (1-c_1\lambda_l) 
 +\tfrac{1}{x_2^2B_3c_1} \left(z_1 +1\right)^2 (1-\tfrac{c_1}{c_1-1}\lambda_l)(c_1-1) \\ 
& +\tfrac{\lambda_l}{x_3^2B_3^3c_1^2}\left(1+z_1^3 + 3 \left(z_1+z_1^2\right)\right)^2 \\ 
= & \tfrac{1}{x_1^2B_3c_1} \tfrac{(z_1+1)^2}{z_1^2} (1-c_1\lambda_l) 
 +\tfrac{1}{x_2^2B_3c_1} \left(z_1 +1\right)^2 (c_1-1-c_1\lambda_l) \\ 
& +\tfrac{\lambda_l}{x_3^2B_3^3c_1^2}(z_1+1)^2\left(z_1^2-z_1+1 + 3z_1\right)^2.
\end{align*}
Thus \eqref{BRF3ra} becomes 
\begin{align*}
& \lambda_l\left((z_1+1)^2-\tfrac{x_3^2}{x_1^2} -\tfrac{x_3^2z_1^2}{x_2^2}\right) 
+\tfrac{1}{c_1}\left(\tfrac{x_3^2}{x_1^2} + \tfrac{x_3^2z_1^2(c_1-1)}{x_2^2}\right) \\
=& \tfrac{1}{x_1^2c_1} \tfrac{(z_1+1)^2}{z_1^2} (1-c_1\lambda_l) 
 +\tfrac{1}{x_2^2c_1} \left(z_1 +1\right)^2 (c_1-1-c_1\lambda_l)  
+\tfrac{\lambda_l}{x_3^2B_3^2c_1^2}(z_1+1)^6,
\end{align*}
if and only if, 
\begin{align*}
& \lambda_l\left((z_1+1)^2-\tfrac{x_3^2}{x_1^2} -\tfrac{x_3^2z_1^2}{x_2^2}\right) 
+\tfrac{1}{c_1}\left(x_3^2-\tfrac{(z_1+1)^2}{z_1^2}\right)\left(\tfrac{1}{x_1^2} + \tfrac{z_1^2(c_1-1)}{x_2^2}\right) \\
= & -\lambda_l(z_1+1)^2\left(\tfrac{1}{x_1^2z_1^2}+\tfrac{1}{x_2^2}\right) 
+\tfrac{\lambda_l}{x_3^2z_1^2}(z_1+1)^4, 
\end{align*}
if and only if, 
\begin{align*}
& \lambda_l\left((z_1+1)^2+\left(\tfrac{(z_1+1)^2}{z_1^2}-x_3^2\right)\left(\tfrac{1}{x_1^2} + \tfrac{z_1^2}{x_2^2}\right)\right) 
+\tfrac{1}{c_1}\left(x_3^2-\tfrac{(z_1+1)^2}{z_1^2}\right)\left(\tfrac{1}{x_1^2} + \tfrac{z_1^2(c_1-1)}{x_2^2}\right) \\
=&  \tfrac{\lambda_l}{x_3^2z_1^2}(z_1+1)^4,
\end{align*}
if and only if, 
\begin{align*}
& \lambda_l\left(\tfrac{(z_1+1)^2}{x_3^ 2}\left(x_3^2-\tfrac{(z_1+1)^2}{z_1^2}\right)
+ \left(\tfrac{(z_1+1)^2}{z_1^2}-x_3^2\right)\left(\tfrac{1}{x_1^2} + \tfrac{z_1^2}{x_2^2}\right) 
\right) \\
&+\tfrac{1}{c_1}\left(x_3^2-\tfrac{(z_1+1)^2}{z_1^2}\right)\left(\tfrac{1}{x_1^2} + \tfrac{z_1^2(c_1-1)}{x_2^2}\right) = 0, 
\end{align*}
if and only if, 
$$
\left(x_3^2-\tfrac{(z_1+1)^2}{z_1^2}\right)\left(\lambda_l\left(\tfrac{(z_1+1)^2}{x_3^ 2}
-\tfrac{1}{x_1^2} - \tfrac{z_1^2}{x_2^2}\right) 
+\tfrac{1}{c_1}\left(\tfrac{1}{x_1^2} + \tfrac{z_1^2(c_1-1)}{x_2^2}\right)\right) = 0, 
$$
concluding the proof of the lemma (recall that $c_2=\tfrac{c_1}{c_1-1}$).  
\end{proof}

We now continue with the proof of Theorem \ref{thm}.  Part (i) of the theorem now follows by just checking that for any $z_1>0$, the numbers $x_1=\frac{1}{z_1}$, $x_2=1$ and $x_3=\frac{z_1+1}{z_1}$ solve equations \eqref{BRF1ra},  \eqref{BRF2da} and  \eqref{eq}.   

Since the right hand factor of \eqref{eq} is positive by \eqref{cij} (note that $c_i\lambda_l=c_{il}<1$), we obtain that $x_3=\frac{z_1+1}{z_1}$ must always hold, which by \eqref{x1C} and \eqref{x2C} implies that $x_1=\frac{1}{z_1}$ and $x_2=1$, respectively, independently of whether $\cas_{\chi_i}$ is a multiple of the identity or not.  This concludes the proof of part (ii) of the theorem.   
\end{proof}

The rest of Section 5 dealing with the existence of curves, starting from Theorem \ref{BRFggs2-kss}, becomes obsolete, with the only exception of Example \ref{PR2-exa}, which can be rewritten as follows.   

\begin{example}\label{exa} (cf.\ Example \ref{PR2-exa}).
The examples of BRF structures given in \cite{PdsRff2} are contained in a class which can be described as follows in terms of the notation in Theorem \ref{thm}.  Given an irreducible symmetric space $G/K$, one considers the homogeneous space $M=G\times G/\Delta K$, so $G_1=G_2=G$ and $c_1=2$.   We therefore obtain for $z_1=1$ that $g_0=(1,1,2)_{\gk}$.  We do not know if the BRF generalized metric found in \cite{PdsRff2} is isometric to $(g_0,H_0)$ up to scaling, the methods used in \cite{PdsRff2} are very different from the approach considered in this paper.    
\end{example}

\begin{remark}\label{lowdimC}
In addition to the examples given in Remark \ref{lowdim}, the following aligned homogeneous spaces are also covered by Theorem \ref{thm}:
\begin{enumerate}[\small{$\bullet$}]
\item $M^{13}=\SU(3)\times\SU(3)/\SO(3)$, $c_1=2$,    

\item $M^{17}=\Spe(2)\times\Spe(2)/\SU(2)$, $c_1=2$,

\item $M^{20}=\SU(4)\times\SU(4)/\Spe(2)$, $c_1=2$,
 
\item $M^{20}=G_2\times G_2/\SU(3)$, $c_1=2$,

\item $M^{21}=G_2\times \Spe(2)/\SU(2)$, $c_1=\tfrac{71}{56}$,

\item $M^{35}=\SO(8)\times \SO(7)/G_2$, $c_1=\tfrac{11}{6}$, 

\item $M^{50}=\SO(10)\times \SU(4)/\Spe(2)$, $c_1=\tfrac{7}{6}$, 

\item $M^{55}=\SU(7)\times \SO(8)/\SO(7)$, $c_1=\tfrac{10}{7}$.
\end{enumerate}
\end{remark}

\begin{remark}\label{mistake}
After submitting paper \cite{BRF}, we continued working on the Ricci curvature of aligned homogeneous spaces.  What we discovered is that all metrics $g_0(z_1)$, $z_1>0$ in Theorem \ref{BRFggs2} (see also Theorem \ref{thm}) can be written as diagonal metrics in terms of a single reductive decomposition, as explained in Remark \ref{rem000}.  This led us to a contradiction between the uniqueness results given in Theorem \ref{BRFggs2}, (ii) and (iii) and Theorem \ref{BRFcurve}, which states that such a curve is pairwise non-homothetic.  We therefore went through the paper to check line by line the computations and found, after a couple of months of redoing everything, the `silly' mistake in the proof of Proposition \ref{HQ2-s2}.  The new formula converted ugly equation \eqref{BRF3ra-2} into the more natural condition \eqref{eq} and everything smoothed out from there.  The expected fact that the curve was actually the same metric viewed in terms of different reductive decompositions became clear.  
\end{remark}

\begin{remark}\label{mistake2}
In our paper in preparation on the Ricci curvature of aligned homogeneous spaces, we have computed Ricci in a different way, using structural constants, as well as Ricci of normal metrics using classical formulas.  Fortunately, the results we obtained coincide with Proposition \ref{ricggals2}, so we are fully confident about Ricci formulas.  The computations on homogeneous spaces $G/K$ with $G$ non-simple are really long and complicated, this is why in most of the literature on compact homogeneous Einstein manifolds it is assumed that $G$ is simple, the case $G$ non-simple is practically unexplored.  
\end{remark}

\end{document}